\newcommand{\D}{\operatorname{\mathbb{D}}}
\newcommand{\C}{\operatorname{\mathbb{C}}}
\newcommand{\B}{\operatorname{\mathcal{B}}}
\newcommand{\A}{\operatorname{\mathcal{A}}}
\newcommand{\hil}{\operatorname{\mathcal{H}}}
\newcommand{\kil}{\operatorname{\mathcal{K}}}
\newcommand{\ol}{\overline }
\DeclareMathOperator{\id}{Id}
\DeclareMathOperator{\ran}{ran}
\newtheorem{lemma}{Lemma}[section]
\newtheorem{theorem}[lemma]{Theorem}
\newtheorem{proposition}[lemma]{Proposition}
\newtheorem{corollary}[lemma]{Corollary}
\theoremstyle{definition}
\begin{document}
\author{Rapha\"el Clou\^atre}
\address{Department of Mathematics, Indiana University, 831 East 3rd Street,
Bloomington, IN 47405} \email{rclouatr@indiana.edu}
\title{Quasisimilarity of invariant subspaces for $C_0$ operators with multiplicity two }
\subjclass[2010]{Primary:47A45, 47A15}
\keywords{$C_0$ operators, invariant subspaces, quasisimilarity orbit}
\begin{abstract}
For an operator $T$ of class $C_0$ with multiplicity two, we show
that the quasisimilarity class of an invariant subspace $M$ is
determined by the quasisimilarity classes of the restriction $T|M$
and of the compression $T_{M^\perp}$. We also provide a canonical form for the subspace $M$.
\end{abstract}
\maketitle

\section{Introduction}
Let $T:\hil\to \hil$ and $T:\hil'\to \hil'$ be bounded linear
operators on Hilbert spaces. If $M$ and $M'$ are invariant subspaces
for $T$ and $T'$ respectively (that is $M\subset \hil$ and
$M'\subset \hil'$ are closed subspaces such that $TM\subset M$ and
$T'M'\subset M'$), we say that $M'$ is a \textit{quasiaffine
transform} of $M$ if there exists a bounded injective operator with
dense range $X:\hil\to \hil'$ such that $XT=T' X$ and $\ol{XM}=M'$.
We write $M\prec M'$ when $M'$ is a quasiaffine transform of $M$. In
that case, we also say that $M'$ lies in the \textit{quasiaffine
orbit} of $M$. When $M\prec  M'$ and $M'\prec M$, we say that $M$
and $M'$ are \textit{quasisimilar} and write $M\sim M'$.
Quasisimilarity is clearly an equivalence relation on the class of
pairs of the form $(T,M)$, where $M$ is an invariant subspace for
the bounded linear operator $T$. In \cite{B}, Bercovici raised the
basic problem underlying our present investigation: describe the
quasisimilarity class of a given invariant subspace for an operator
of class $C_0$ (see definition in Section 2). Theorem
\ref{invsubmult1} below (see Section 2) is classical and offers a
complete and very simple answer to the problem in the case where the
operator has multiplicity one (that is the operator has a cyclic
vector). Hence, we are interested in operators of class $C_0$ with
multiplicity higher than one.

In their pioneering work (see \cite{BT}), Bercovici and Tannenbaum
considered the case where $T$ is a so-called uniform Jordan operator
(namely $T=S(\theta)\oplus S(\theta)\oplus \ldots$) with finite
multiplicity. In that case, they established that the
quasisimilarity class of $M$ is determined by the quasisimilarity
class of the restriction $T|M$ (see Section 2). Moreover, the
authors  observed that for $T=S(z^2)\oplus S(z)$, this
classification breaks down, so the corresponding result may fail if
$T$ is not uniform. Later on, it was proved in \cite{B} that this
classification of invariant subspaces of a uniform Jordan operator
holds if and only if $T|M$ has property (P). In general, the
quasisimilarity class of an invariant subspace for a uniform Jordan
operator is determined by the quasisimilarity classes of the
restriction $T|M$ and of the compression $T_{M^\perp}$ (see
\cite{BS1}).

In the context of a non-uniform Jordan operator, much less is known.
Related results for general operators of class $C_0$ can be found in \cite{B},
where it is proved that the quasisimilarity class of an invariant
subspace is determined by that of $T|M$ if and only if $T$ has property (Q).
In case where $T|M$ has multiplicity one, then the weakly
quasiaffine orbit of an invariant subspace $M$ is determined by the
quasisimilarity classes of $T|M$ and $T_{M^\perp}$ (see \cite{BS2}).

More recently, nilpotent operators of finite multiplicity have been
considered by Li and Müller in \cite{LM2}. They proved that the
quasisimilarity class of $M$ is determined by the quasisimilarity
classes of $T|M$ and $T_{M^\perp}$ when either of those operators
has multiplicity one. In addition, the authors considered a
combinatorial object (a sequence of partitions) known as a
\textit{Littlewood-Richardson sequence} which encodes the
relationships that must hold between the Jordan models of $T$, $T|M$
and $T_{M^\perp}$ (see also \cite{LM1}, \cite{BSL1} and
\cite{BSL2}). Using these objects, they prove that for multiplicity
at least three, the quasisimilarity classes of $T|M$ and
$T_{M^\perp}$ are not enough to determine the quasisimilarity class
of $M$ (so that our main theorem is sharp as far as multiplicities
are concerned). In fact, that information does not even suffice to
determine the larger equivalence class of invariant subspaces having
a fixed Littlewood-Richardson sequence. However, an easy argument
shows that in the case of multiplicity two, the knowledge of the
quasisimilarity classes of $T$, $T|M$ and $T_{M^\perp}$ is enough to
determine the Littlewood-Richardson sequence corresponding to $M$:
the so-called Littlewood-Richardson rule can be satisfied in only
one way. Hence, this case seems to involve some kind of uniqueness
which is not present for higher multiplicities. Our main result
confirms and strenghtens this observation, in fact we show that for
arbitary operators of class $C_0$ with multiplicity two, the
quasisimilarity class of an invariant subspace $M$ is determined by
that of $T|M$ and $T_{M^\perp}$. We also identify a specific
invariant subspace which can serve as a canonical space.

\section{Background and preliminaries}
We give here some background concerning operators of class $C_0$.
Let $H^\infty$ be the algebra of bounded holomorphic functions on
the open unit disc $\D$. Let $\hil$ be a Hilbert space and $T$ a
bounded linear operator on $\hil$, which we indicate by
$T\in\B(\hil)$. The operator $T$ is said to be of \textit{class
$C_0$} if there exists an algebra homomorphism $\Phi: H^\infty \to
\B(\hil)$ with the following properties:
\begin{enumerate}[(i)]
    \item $\|\Phi(u)\|\leq u$ for every $u\in H^\infty$
    \item $\Phi(p)=p(T)$ for every polynomial $p$
    \item $\Phi$ is continuous when $H^\infty$ and $\B(\hil)$ are given their respective weak-star topologies
    \item $\Phi$ has non-trivial kernel.
\end{enumerate}
We use the notation $\Phi(u)=u(T)$, which is the Sz.-Nagy--Foias
$H^\infty$ functional calculus. It is known that $\ker \Phi=m_T
H^\infty$ for some inner function $m_T$ called the \textit{minimal
function} of $T$. The minimal function is uniquely determined up to
a scalar factor of absolute value one. Given a vector $x\in \hil$,
we define its minimal function, which we denote by $m_x$, to be the
minimal function of the restriction of the operator $T$ to the
invariant subspace $\bigvee_{n=0}^\infty T^n x$. A vector $x\in
\hil$ is said to be \textit{maximal} for $T$ if $m_x$ coincides with
$m_T$ up to a scalar factor of absolute value one. A set $E\subset
\hil$ is said to be \textit{cyclic} for $T$ if
$\hil=\bigvee_{n=0}^\infty T^n E$.

\begin{theorem}[\cite{bercOTA} Theorem 2.3.6, Theorem 2.3.7]\label{maxvector}
Let $T\in \B(\hil)$ be an operator of class $C_0$. Then, the set of
maximal vectors for $T$ is a dense $G_\delta$ in $\hil$. Moreover,
given any Banach space $\kil$ and any bounded linear operator
$A:\kil\to \hil$ with the property that $A\kil$ is a cyclic set for
$T$, the set
$$\{k\in \kil: Ak \text{ is maximal for } T\}$$
 is a dense $G_\delta$ in $\kil$.
\end{theorem}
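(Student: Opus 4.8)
The plan is to deduce the first assertion from the second by taking $\kil=\hil$ and $A=\id_\hil$: then $A\kil=\hil$ is trivially cyclic for $T$ and the set in question is exactly the set of maximal vectors, so it suffices to treat the second, more general statement. Throughout I would use two elementary consequences of the functional calculus: first, that $u(T)x=0$ if and only if $m_x$ divides $u$ (because $x$ is cyclic for $T|\bigvee_n T^n x$, whose minimal function is $m_x$); and second, that $m_{Ak}$ divides $m_T$ for every $k$, while the cyclicity of $A\kil$ forces $m_T$ to be the least common inner multiple of the functions $m_{Ak}$.

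The combinatorial heart is the observation that $Ak$ is maximal if and only if $\theta(T)Ak\neq 0$ for every inner function $\theta$ that properly divides $m_T$. Indeed, if $Ak$ is maximal and $\theta$ is a proper divisor, then $\theta(T)Ak=0$ would give $m_T=m_{Ak}\mid\theta$, which is absurd; conversely, if $Ak$ is not maximal then $m_{Ak}$ is itself a proper divisor annihilating $Ak$. For a fixed proper divisor $\theta$ one has $\theta(T)\neq 0$ (otherwise $\theta\in\ker\Phi=m_TH^\infty$, forcing $m_T\mid\theta$), so $\ker\theta(T)$ is a proper, closed, $T$-invariant subspace of $\hil$; since $A\kil$ is cyclic it cannot be contained in $\ker\theta(T)$, and therefore $A^{-1}(\ker\theta(T))=\{k:\theta(T)Ak=0\}$ is a proper closed, hence nowhere dense, subspace of $\kil$.

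When the lattice of inner divisors of $m_T$ has a countable family $\{\theta_n\}$ of proper divisors that is cofinal from below --- for instance when $m_T$ is a Blaschke product, where one may take $\theta_n=m_T/b_{\lambda_n}$ running over the zeros $\lambda_n$ --- the argument now closes cleanly: by the previous paragraph
\[
\{k: Ak \text{ is maximal}\}=\bigcap_n\bigl(\kil\setminus A^{-1}(\ker\theta_n(T))\bigr)
\]
is a countable intersection of dense open subsets of $\kil$, which is a dense $G_\delta$ by the Baire category theorem. I would record this case first, as it already contains the main idea and covers the situations arising most often in applications.

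The hard part is the general case, in which $m_T$ carries a nontrivial continuous singular factor: then no countable family of proper divisors is cofinal from below, so the non-maximal set $\bigcup_\theta A^{-1}(\ker\theta(T))$ is an uncountable, directed union of the nowhere dense subspaces above and is not visibly an $F_\sigma$. To handle density I would invoke the genericity principle that for $x,y\in\hil$ the set of scalars $t$ with $m_{x+ty}=\operatorname{lcm}(m_x,m_y)$ has small complement on each complex line; combined with the existence of one $e$ with $Ae$ maximal (obtained by attaining the least common multiple $m_T$ along a generating sequence drawn from $A\kil$) this makes $A(k_0+te)$ maximal for arbitrarily small $t$, giving density around any $k_0$. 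The residual $G_\delta$ property itself is the genuine obstacle: one must show that maximality is generic, i.e.\ that $\bigcup_\theta A^{-1}(\ker\theta(T))$ is meager, even though it is not a countable union of the subspaces $A^{-1}(\ker\theta(T))$. This requires the finer perturbation analysis of minimal functions supplied by the structure theory of $C_0$ operators, and it is precisely the point at which the continuous singular part of $m_T$ must be confronted directly.
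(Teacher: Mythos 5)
This statement is quoted from Bercovici's book (Theorems 2.3.6 and 2.3.7 of \cite{bercOTA}) and the paper gives no proof of it, so there is no in-paper argument to compare against; your proposal has to stand on its own, and as written it does not.

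Your reduction of the first assertion to the second, the characterization of maximality of $Ak$ as ``$\theta(T)Ak\neq 0$ for every proper inner divisor $\theta$ of $m_T$,'' and the observation that each $A^{-1}(\ker\theta(T))$ is a proper closed (hence nowhere dense) subspace of $\kil$ are all correct. But the argument only closes when the proper inner divisors of $m_T$ admit a countable cofinal subfamily, and you yourself concede that when $m_T$ has a nontrivial continuous singular part no such family exists, so that the non-maximal set is an uncountable union of nowhere dense subspaces with no visible $F_\sigma$ structure. That concession is precisely the theorem: the whole content of the $G_\delta$ assertion lies in the case you leave open, and ``this requires the finer perturbation analysis of minimal functions'' is a placeholder, not a step. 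A correct proof must replace the uncountable family of conditions $\{\theta(T)Ak\neq 0\}$ by countably many open conditions from the outset --- for instance by quantifying how far $m_{Ak}$ is from $m_T$ through a numerical, lower semicontinuous functional rather than through the divisor lattice --- and nothing in your sketch supplies such a device.

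The density argument has the same difficulty folded into it. You invoke two unproved ingredients: a two-vector genericity principle for $m_{x+ty}=m_x\vee m_y$, and the existence of a single $e\in\kil$ with $Ae$ maximal, ``obtained by attaining the least common multiple along a generating sequence drawn from $A\kil$.'' The second of these is itself a nontrivial theorem of exactly the same nature as the statement being proved (it is the content of results like Theorem \ref{ell1}, which the paper also imports rather than proves), so the proposal is circular at this point: you are assuming a maximal vector exists in $A\kil$ in order to prove that most vectors of $A\kil$ are maximal. In summary, the easy half (each fixed obstruction is nowhere dense) is done correctly, but the genuinely hard half --- meagerness of the full non-maximal set and density without presupposing a maximal vector --- is missing.
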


We denote by $H^2$ the Hilbert space of functions
$$f(z)=\sum_{n=0}^\infty a_n z^n$$
holomorphic on the open unit disc, equipped with the norm
$$
\|f\|^2=\sum_{n=0}^\infty |a_n|^2.
$$
Our first lemma is well known.
\begin{lemma}\label{outer}
Given $f_1,f_2,\ldots,f_n\in H^2$, there exists an outer function
$v\in H^\infty$ such that $f_1 v,f_2v,\ldots,f_n v$ all belong to
$H^\infty$ as well.
\end{lemma}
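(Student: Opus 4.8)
The plan is to build $v$ by prescribing its boundary modulus on the unit circle $\T$ so that it dominates all of the $f_i$ at once while staying bounded. Passing to boundary values, set
$$g = \max\{1, |f_1|, |f_2|, \ldots, |f_n|\}.$$
Then $g \ge 1$, $g \ge |f_i|$ for each $i$, and $g \le 1 + \sum_{i=1}^n |f_i|$; since every $f_i \in H^2$ has square-integrable boundary values, it follows that $g \in L^2(\T) \subset L^1(\T)$.

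The point that makes the construction possible is that $\log g$ is integrable: because $g \ge 1$ we have $0 \le \log g \le g \in L^1(\T)$, so $\log g \in L^1(\T)$. I can therefore let $v$ be the outer function whose boundary modulus equals $1/g$, namely
$$v(z) = \exp\left(\frac{1}{2\pi}\int_0^{2\pi} \frac{e^{it}+z}{e^{it}-z}\,\log\frac{1}{g(e^{it})}\,dt\right), \qquad z \in \D.$$
Since $\log(1/g) = -\log g \in L^1(\T)$, this formula defines a genuine nonvanishing holomorphic function on $\D$ with radial boundary modulus $1/g$. Taking real parts, $\log|v(z)|$ is the Poisson integral of $\log(1/g) \le 0$, hence $\log|v(z)| \le 0$ and $|v(z)| \le 1$ throughout $\D$; thus $v \in H^\infty$.

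It remains to verify that each product $f_i v$ lies in $H^\infty$. First, $f_i v \in H^2$, being the product of $f_i \in H^2$ with the bounded function $v$. Second, on $\T$ we have $|f_i v| = |f_i|/g \le 1$ almost everywhere, by the defining property $g \ge |f_i|$. The conclusion then follows from the classical fact that a Hardy-class function with essentially bounded boundary values is bounded: representing $f_i v \in H^2 \subset H^1$ by the Poisson integral of its boundary function gives
$$|(f_i v)(z)| \le \frac{1}{2\pi}\int_0^{2\pi} \frac{1-|z|^2}{|e^{it}-z|^2}\,|(f_i v)(e^{it})|\,dt \le 1, \qquad z \in \D,$$
so that $f_i v \in H^\infty$ for every $i$.

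The only genuinely delicate step is securing the integrability of $\log g$, which is precisely what guarantees that the prescribed $v$ is an honest (nonzero) outer function rather than degenerating to $0$; once $v$ is in hand, the remaining verifications are routine bookkeeping with the boundary theory of Hardy spaces.
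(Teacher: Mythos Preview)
Your proof is correct and follows essentially the same idea as the paper: prescribe the boundary modulus of the desired outer function so that it is bounded by $1$ and dominated by each $1/|f_i|$. The paper uses $|v|=1/(1+|f_1|+\cdots+|f_n|)$ where you use $|v|=1/\max\{1,|f_1|,\ldots,|f_n|\}$; both choices work for identical reasons, and you have supplied the routine verifications that the paper leaves implicit.
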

\begin{proof}
It suffices to define the absolute value of $v$ on the unit circle,
which we take to be $$1/(1+|f_1|+\ldots+|f_n|).$$
\end{proof}

Recall that given functions $u,v\in H^\infty$, we say that $u$
\textit{divides} $v$ and write $u|v$ if there exists a function
$w\in H^\infty$ such that $v=wu$. We write $u \wedge v$ for the
greatest common inner divisor of $u$ and $v$, and $u\vee v$ for
their least common inner multiple. Both of these quantities are
determined up to a scalar factor of absolute value one. If $u$ and
$v$ are inner functions such that $u|v$ and $v|u$ (or, equivalently,
$u$ and $v$ only differ by a scalar factor of absolute value one),
we write $u\equiv v$. An inner function $u\in H^\infty$ is said to
divide $f\in H^2$ if $f\in uH^2$.  We naturally denote by $f\wedge
u$ the greatest common inner divisor of $f$ and $u$. A very useful
consequence of Theorem \ref{maxvector} is the following.

\begin{theorem}[\cite{bercOTA} Theorem 3.1.14]\label{ell1}
Let $\{f_j\}_{j=0}^\infty \subset H^2$ be a bounded sequence of functions, and let $\theta\in H^\infty$ be an inner function. Then, the set of sequences $\{a_j\}_{j=0}^\infty$ in $\ell^1$ satisfying
$$
\left(\sum_{j=0}^\infty a_j f_j \right)\wedge \theta\equiv\bigwedge_{j=0}^\infty f_j \wedge \theta
$$
is a dense $G_\delta$.
\end{theorem}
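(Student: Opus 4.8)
The plan is to translate the common-divisor condition into a statement about \emph{maximal vectors} for a Jordan block, so that Theorem~\ref{maxvector} applies directly. Write $d=\bigwedge_{j=0}^\infty (f_j\wedge\theta)$; since $d$ divides each $f_j$ and divides $\theta$, I factor $f_j=d\,g_j$ with $g_j\in H^2$ and $\theta=d\phi$ with $\phi$ inner, where $\{g_j\}$ is again bounded because multiplication by the inner function $d$ is isometric on $H^2$. A short computation with the divisibility relations (using $(dF)\wedge(d\phi)=d(F\wedge\phi)$) gives $\bigwedge_j(g_j\wedge\phi)\equiv 1$ and, for any $\{a_j\}\in\ell^1$,
\[
\Bigl(\sum_{j} a_j f_j\Bigr)\wedge\theta \;=\; d\,\Bigl[\Bigl(\sum_{j} a_j g_j\Bigr)\wedge\phi\Bigr].
\]
Hence the desired identity $(\sum_j a_j f_j)\wedge\theta\equiv d$ holds precisely when $(\sum_j a_j g_j)\wedge\phi\equiv 1$, and it suffices to prove the theorem under the normalization $\bigwedge_j(f_j\wedge\theta)\equiv 1$.

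Under this normalization I consider the operator $T=S(\theta)$, the compression of the unilateral shift to $\hil=H(\theta)=H^2\ominus\theta H^2$, which is of class $C_0$ with minimal function $\theta$. Let $P_\theta$ denote the orthogonal projection of $H^2$ onto $H(\theta)$ and define $A\colon \ell^1\to \hil$ by $A(\{a_j\})=P_\theta\bigl(\sum_j a_j f_j\bigr)=\sum_j a_j P_\theta f_j$; the sum converges absolutely since $\sum_j|a_j|\,\|f_j\|\le (\sup_j\|f_j\|)\,\|\{a_j\}\|_{\ell^1}$, so $A$ is bounded. The key point is that $A(\ell^1)$ is a cyclic set for $T$: the invariant subspaces of $S(\theta)$ are exactly the spaces $uH^2\cap H(\theta)$ indexed by the inner divisors $u$ of $\theta$, the cyclic subspace generated by $P_\theta f_j$ corresponds to $u=f_j\wedge\theta$, and the join of these subspaces corresponds to $\bigwedge_j(f_j\wedge\theta)\equiv 1$, i.e.\ to $u\equiv 1$, which is all of $H(\theta)$. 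Since $A e_j=P_\theta f_j$ for the standard basis vectors $e_j$, the range $A(\ell^1)$ generates $H(\theta)$ as well.

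Now Theorem~\ref{maxvector} applies with $\kil=\ell^1$: the set of $\{a_j\}\in\ell^1$ for which $A(\{a_j\})$ is maximal for $S(\theta)$ is a dense $G_\delta$. It remains only to identify maximality with the common-divisor condition. For $f\in H^2$ the cyclic subspace $\bigvee_{n\ge 0}S(\theta)^n P_\theta f$ equals $(f\wedge\theta)H^2\cap H(\theta)$, which is unitarily equivalent to $H(\theta/(f\wedge\theta))$, so the minimal function of $P_\theta f$ is $\theta/(f\wedge\theta)$; thus $P_\theta f$ is maximal (minimal function $\equiv\theta$) if and only if $f\wedge\theta\equiv 1$. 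Moreover $f\wedge\theta=(P_\theta f)\wedge\theta$, since $f$ and $P_\theta f$ differ by an element of $\theta H^2$. Applying this with $f=\sum_j a_j f_j$ shows that the dense $G_\delta$ produced above is exactly the set of sequences satisfying $(\sum_j a_j f_j)\wedge\theta\equiv 1$, which completes the normalized case and hence the theorem.

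I expect the main obstacle to be the bookkeeping in the two standard facts about Jordan blocks used above --- the correspondence between the lattice of invariant subspaces of $S(\theta)$ and the inner divisors of $\theta$, and the resulting formula $m_{P_\theta f}=\theta/(f\wedge\theta)$ for minimal functions --- together with verifying that cyclicity of $A(\ell^1)$ is forced by the normalization $\bigwedge_j(f_j\wedge\theta)\equiv 1$. Once these are in place, all of the genericity is carried by Theorem~\ref{maxvector}, and the reduction in the first paragraph removes the need to treat the nontrivial common divisor $d$ directly.
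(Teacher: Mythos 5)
The paper states this result purely as a citation of \cite{bercOTA} and gives no proof, but your argument is correct and is essentially the standard one: reduce to the case $\bigwedge_{j}(f_j\wedge\theta)\equiv 1$ by factoring out the common inner divisor $d$, then apply Theorem \ref{maxvector} to the bounded map $\ell^1\to H(\theta)$, $\{a_j\}\mapsto \sum_j a_j P_{H(\theta)}f_j$, whose range is a cyclic set for $S(\theta)$ precisely because of the normalization. The identification of maximal vectors of $S(\theta)$ with vectors $h$ satisfying $h\wedge\theta\equiv 1$, and of the cyclic subspace of $P_{H(\theta)}f$ with $(f\wedge\theta)H^2\ominus\theta H^2$, are exactly the content of Proposition \ref{jordanblockprops}, so every ingredient you invoke is already available in the paper.
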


For any inner function $\theta\in H^\infty$, the space $H(\theta)=H^2\ominus \theta H^2$ is closed and invariant for $S^*$, the adjoint of the shift operator $S$ on $H^2$. The operator $S(\theta)$ defined by $S(\theta)^*=S^*|(H^2\ominus \theta H^2)$ is called a \textit{Jordan block}; it is of class $C_0$ with minimal function $\theta$. We give some useful properties of these operators; they will be used repeatedly throughout and often without explicit mention.

\begin{proposition}[\cite{bercOTA} Proposition 3.1.10, Corollary 3.1.12]\label{jordanblockprops}
Let $\theta\in H^\infty$ be an inner function.
\begin{enumerate}
    \item[\rm{(i)}] The operator $S(\theta)$ has multiplicity one. In fact, $h\in H(\theta)$ is cyclic if and only if $h\wedge \theta\equiv 1$.
    \item[\rm{(ii)}] If $\phi\in H^\infty$ is an inner divisor of $\theta$, then $\phi H^2\ominus \theta H^2$ is an invariant subspace for      $S(\theta)$. In fact,
    $$
    \phi H^2\ominus \theta H^2=\ran \phi(S(\theta))=\ker (\theta/\phi)(S(\theta)).
    $$
    Conversely, any invariant subspace for $S(\theta)$ is of this form.
    \item[\rm{(iii)}] Let $u\in H^\infty$ be any function and let $X=u(S(\theta))$. Then, $\ker X=\ker X^*=\{0\}$ if and only if $u\wedge \theta\equiv1$.
\end{enumerate}
\end{proposition}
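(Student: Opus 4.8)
The plan is to first record the basic identification that drives all three parts: since $S(\theta)^*=S^*|H(\theta)$ and $H(\theta)=H^2\ominus\theta H^2$ is semi-invariant for the shift $S$ (a difference of the invariant subspaces $H^2$ and $\theta H^2$), Sarason's lemma realizes the functional calculus as a compression, $u(S(\theta))=P_{H(\theta)}M_u|H(\theta)$ for every $u\in H^\infty$, where $M_u$ is multiplication by $u$ and $P_{H(\theta)}$ is the orthogonal projection onto $H(\theta)$; in particular $S(\theta)^n h=P_{H(\theta)}(z^n h)$. I would prove (ii) before (i), since the classification of invariant subspaces is the natural tool for computing cyclic subspaces. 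For the two displayed identities in (ii) I would argue directly: for $f\in H(\theta)$ one has $(\theta/\phi)(S(\theta))f=P_{H(\theta)}((\theta/\phi)f)=0$ exactly when $(\theta/\phi)f\in\theta H^2$, i.e. $f\in\phi H^2$, giving $\ker(\theta/\phi)(S(\theta))=\phi H^2\cap H(\theta)=\phi H^2\ominus\theta H^2$; and for the range, the inclusion $\ran\phi(S(\theta))\subseteq\phi H^2\ominus\theta H^2$ is immediate from $\phi(S(\theta))f=P_{H(\theta)}(\phi f)\in\phi H^2$, while the reverse follows by noting that each $w\in\phi H^2\ominus\theta H^2$ has the form $w=\phi v$ with $v\in H(\theta/\phi)\subseteq H(\theta)$, whence $\phi(S(\theta))v=P_{H(\theta)}(\phi v)=w$.

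For the converse in (ii) I would pass to the full Hardy space: given an $S(\theta)$-invariant $\mathcal N\subseteq H(\theta)$, the subspace $\mathcal M=\mathcal N\oplus\theta H^2$ is shift-invariant (a short computation using invariance of $\mathcal N$ under $S(\theta)$ together with invariance of $\theta H^2$), so Beurling's theorem gives $\mathcal M=\phi H^2$ for some inner $\phi$; since $\theta H^2\subseteq\mathcal M$ one has $\phi|\theta$, and $\mathcal N=\mathcal M\ominus\theta H^2=\phi H^2\ominus\theta H^2$.

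Then (i) follows cleanly. The cyclic subspace $\mathcal M_h=\bigvee_n S(\theta)^n h$ is $S(\theta)$-invariant, hence equals $\phi H^2\ominus\theta H^2$ by (ii), and I would pin down $\phi\equiv h\wedge\theta$ by a double divisibility argument: $h\in\mathcal M_h\subseteq\phi H^2$ forces $\phi|h$ and $\phi|\theta$, so $\phi|(h\wedge\theta)$, while $h\in(h\wedge\theta)H^2\ominus\theta H^2$ shows $\mathcal M_h\subseteq(h\wedge\theta)H^2\ominus\theta H^2$ and hence $(h\wedge\theta)|\phi$. Thus $\mathcal M_h=(h\wedge\theta)H^2\ominus\theta H^2$, which equals $H(\theta)$ exactly when $h\wedge\theta\equiv1$; this is the cyclicity criterion, and multiplicity one follows since such $h$ exist (for instance the reproducing kernel $1-\overline{\theta(0)}\theta\in H(\theta)$, which is coprime to $\theta$).

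Finally, for (iii) write $\delta=u\wedge\theta$. The kernel computation mirrors (ii): $u(S(\theta))f=P_{H(\theta)}(uf)=0$ iff $uf\in\theta H^2$, and cancelling $\delta$ and using coprimality of $u/\delta$ and $\theta/\delta$ yields $\ker u(S(\theta))=(\theta/\delta)H^2\ominus\theta H^2$, trivial iff $\delta\equiv1$. For $\ker u(S(\theta))^*=(\overline{\ran u(S(\theta))})^\perp$, factor $u=\delta\,(u/\delta)$, so $\ran u(S(\theta))\subseteq\ran\delta(S(\theta))=\delta H^2\ominus\theta H^2$, a proper subspace whenever $\delta\not\equiv1$; conversely, when $\delta\equiv1$ I would prove dense range by applying $u(S(\theta))$ to a cyclic vector $h$ and using the cyclic-subspace formula from (i) to get $\overline{\ran u(S(\theta))}=(w\wedge\theta)H^2\ominus\theta H^2$ with $w=P_{H(\theta)}(uh)$, then running a divisibility chase (any common inner divisor of $w$ and $\theta$ divides $uh$, hence divides $h$ by coprimality, hence is trivial) to conclude $w\wedge\theta\equiv1$. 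I expect this last step—dense range in the coprime case, equivalently triviality of the adjoint's kernel—to be the main obstacle, since unlike the kernel it is not a one-line projection computation; the cyclic-vector device (or, alternatively, the unitary equivalence $S(\theta)^*\cong S(\tilde\theta)$ with $\tilde\theta(z)=\overline{\theta(\bar z)}$, reducing it to the kernel case) is what makes it go through.
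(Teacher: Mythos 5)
This proposition is stated in the paper as imported background, cited to Bercovici's book (\cite{bercOTA}, Proposition 3.1.10 and Corollary 3.1.12); the paper itself contains no proof, so there is nothing internal to compare your argument against. Judged on its own, your proof is correct and follows the standard route: the Sarason compression formula $u(S(\theta))=P_{H(\theta)}M_u|H(\theta)$, Beurling's theorem via the lift $\mathcal N\oplus\theta H^2$ for the converse in (ii), the identification $\bigvee_n S(\theta)^n h=(h\wedge\theta)H^2\ominus\theta H^2$ for (i), and the coprimality/divisibility chase for (iii). All the individual steps check out, including the two points most likely to be glossed over: the surjectivity of $\phi(S(\theta))$ onto $\phi H^2\ominus\theta H^2$ (your factorization $w=\phi v$ with $v\in H(\theta/\phi)$ is exactly right and shows the range is closed, which the paper later uses in Lemma \ref{cycliccontained}), and the dense-range half of (iii), where your computation $w=P_{H(\theta)}(uh)=uh-\theta g$ followed by the observation that a common inner divisor of $w$ and $\theta$ must divide $uh$, hence $h$, hence $1$, is the standard way to close the gap. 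The alternative you mention for that last step, namely $S(\theta)^*\cong S(\theta^\sim)$ (which is the paper's Theorem \ref{jordanadjoint} specialized to a Jordan block), is equally valid. One small presentational point: in (i) you should note that when $\theta$ is a unimodular constant the space $H(\theta)$ is trivial and the claim is vacuous; otherwise the reproducing-kernel vector $1-\overline{\theta(0)}\theta$ is nonzero and your coprimality argument for it is fine.
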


The following result follows from the commutant lifting theorem (see \cite{Sar}).
\begin{theorem}[\cite{bercOTA} Theorem 3.1.16]\label{commutantJordan}
Let $\theta_0$ and $\theta_1$ be two inner functions. Assume that $X:H(\theta_0)\to H(\theta_1)$ satisfies $XS(\theta_0)=S(\theta_1)X$. Then, there exists a function $u\in H^\infty$ such that $\theta_1|u \theta_0$, $\|u\|=\|X\|$ and
$$
X=P_{H(\theta_1)}u(S)|H(\theta_0).
$$
Conversely, given any function $u\in H^\infty$ satisfying $\theta_1|u \theta_0$, the operator $P_{H(\theta_1)}u(S)|H(\theta_0)$ intertwines $S(\theta_1)$ and $S(\theta_0)$.
\end{theorem}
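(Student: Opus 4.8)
The plan is to handle the converse by a direct computation and to derive the forward (and more substantial) implication from the commutant lifting theorem, exploiting the classical fact that the shift $S$ on $H^2$ is the minimal isometric dilation of every Jordan block $S(\theta)$, with $H(\theta)=H^2\ominus\theta H^2$ sitting inside $H^2$ as a co-invariant (that is, $S^*$-invariant) subspace and $\theta H^2$ as its $S$-invariant orthogonal complement.

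For the converse, suppose $u\in H^\infty$ satisfies $\theta_1|u\theta_0$ and put $X=P_{H(\theta_1)}u(S)|H(\theta_0)$. The operative observation is that $u(S)$ (multiplication by $u$) carries $\theta_0 H^2$ into $u\theta_0 H^2\subseteq \theta_1 H^2$ by the divisibility hypothesis, so $P_{H(\theta_1)}u(S)$ annihilates $\theta_0 H^2=H^2\ominus H(\theta_0)$. Combining this with the identities $S(\theta_i)=P_{H(\theta_i)}S|H(\theta_i)$ and the $S$-invariance of $\theta_i H^2$, one checks for every $f\in H(\theta_0)$ that $X S(\theta_0)f$ and $S(\theta_1)Xf$ both collapse to $P_{H(\theta_1)}(uzf)$, giving $X S(\theta_0)=S(\theta_1)X$.

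For the forward direction, given an intertwiner $X$ I would invoke the commutant lifting theorem. Since $S$ is the minimal isometric dilation of both $S(\theta_0)$ and $S(\theta_1)$ and $XS(\theta_0)=S(\theta_1)X$, there is a bounded operator $B$ on $H^2$ with $BS=SB$, with $\|B\|=\|X\|$, and satisfying the full projection identity $P_{H(\theta_1)}B=XP_{H(\theta_0)}$. As $B$ commutes with the shift it is an analytic Toeplitz operator $B=u(S)$ for some $u\in H^\infty$ with $\|u\|_\infty=\|B\|=\|X\|$. Restricting the projection identity to $H(\theta_0)$ yields the formula $X=P_{H(\theta_1)}u(S)|H(\theta_0)$, while restricting it to $\theta_0 H^2$ gives $P_{H(\theta_1)}u(S)|\theta_0 H^2=0$, i.e.\ $u\theta_0 H^2\subseteq \theta_1 H^2$, which is exactly $\theta_1|u\theta_0$.

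The genuinely substantive point, and the only one requiring more than bookkeeping, is the norm equality $\|u\|_\infty=\|X\|$: exhibiting some Toeplitz lifting of $X$ is routine, but that it may be chosen without increasing the norm is precisely the assertion of the commutant lifting theorem. A secondary subtlety, worth emphasizing in the writeup, is that one must use the two-sided identity $P_{H(\theta_1)}B=XP_{H(\theta_0)}$ and not merely $P_{H(\theta_1)}B|H(\theta_0)=X$; it is the behaviour of $B$ on the complementary subspace $\theta_0 H^2$ that records the divisibility relation and so ensures the resulting formula really does map $H(\theta_0)$ into $H(\theta_1)$.
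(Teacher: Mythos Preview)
Your proposal is correct and follows exactly the approach the paper indicates: the paper does not supply its own proof of this theorem but cites it from \cite{bercOTA} and remarks that it follows from the commutant lifting theorem (see \cite{Sar}), which is precisely what you do. Your handling of both directions is standard and sound; in particular, your insistence on the two-sided identity $P_{H(\theta_1)}B = XP_{H(\theta_0)}$ (equivalently $B^*|H(\theta_1)=X^*$) is the right form of the lifting to invoke, and it immediately yields both the representation $X=P_{H(\theta_1)}u(S)|H(\theta_0)$ and the divisibility $\theta_1\mid u\theta_0$.
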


A more general family of operators consists of the so-called
\textit{Jordan operators}. We will define them here in the case
where the Hilbert space on which they act is separable. These
operators are of the form $\bigoplus_{j=0}^\infty S(\theta_j)$ where
$\{\theta_j\}_{j=0}^\infty$ is a sequence of inner functions
satisfying $\theta_{j+1}|\theta_j$ for $j\geq 0$. The Jordan
operators are of fundamental importance in the study of operators of
class $C_0$ as the following theorem illustrates. Recall first that
a bounded injective linear operator with dense range is called a
\textit{quasiaffinity}. Two operators $T\in \B(\hil)$ and $T'\in
\B(\hil')$ are said to be \textit{quasisimilar} if there exist
quasiaffinities $X:\hil\to \hil'$ and $Y:\hil' \to \hil$ such that
$XT=T' X$ and $T Y=YT'$. We use the notation $T\sim T'$ to indicate
that $T$ and $T'$ are quasisimilar. Recall also that the
\textit{multiplicity} of an operator is the smallest cardinality of
a cyclic set.

\begin{theorem}[\cite{bercOTA} Theorem 3.5.1, Corollary 3.5.25]\label{existencejordan}
For any operator $T$ of class $C_0$ acting on a separable Hilbert
space there exists a unique Jordan operator $J=\bigoplus_{j=0}^\infty S(\theta_j)$ such that $T$ and
$J$ are quasisimilar. Moreover, for each $j\geq 0$ we have
$$
\theta_j\equiv\bigwedge\{\phi\in H^\infty: T|\ol{\ran \phi(T)}
\text{ has multiplicity at most } j\}.
$$
\end{theorem}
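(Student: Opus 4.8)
The plan is to take the displayed formula as the \emph{definition} of the inner functions and then establish three things: that $J=\bigoplus_{j}S(\theta_j)$ is an honest Jordan operator, that $T$ is quasisimilar to $J$ (existence), and that the right-hand side of the formula is an invariant of the quasisimilarity class of $T$. The last point will deliver both the ``moreover'' clause and uniqueness simultaneously. So I set $\theta_j=\bigwedge\{\phi\in H^\infty: T|\overline{\ran\phi(T)}\text{ has multiplicity at most }j\}$ and write $\mu(\cdot)$ for multiplicity. Enlarging $j$ only enlarges the competing set of functions $\phi$, so the greatest common inner divisor can only shrink; hence $\theta_{j+1}\mid\theta_j$ and $J$ is a Jordan operator. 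For $j=0$ the condition $\mu(T|\overline{\ran\phi(T)})=0$ reads $\phi(T)=0$, i.e. $m_T\mid\phi$, so $\theta_0\equiv m_T$.

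I would dispose of invariance and the uniqueness package first, since it is the soft part. If $X\colon\hil\to\hil'$ is a quasiaffinity with $XT=T'X$ and $E$ is cyclic for $T$, then $\bigvee_k(T')^k(XE)=\overline{X\bigvee_kT^kE}=\overline{X\hil}=\hil'$, so $XE$ is cyclic for $T'$ of the same cardinality; this gives $\mu(T')\le\mu(T)$, and symmetry gives equality. The same bookkeeping shows $X$ restricts to a quasiaffinity $\overline{\ran\phi(T)}\to\overline{\ran\phi(T')}$ intertwining the corresponding restrictions, because $X\,\ran\phi(T)\subseteq\ran\phi(T')$ and $\overline{\phi(T')X\hil}=\overline{\ran\phi(T')}$. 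Thus $\mu(T|\overline{\ran\phi(T)})=\mu(T'|\overline{\ran\phi(T')})$ for every $\phi$, the competing sets coincide, and the meets agree: the right-hand side of the formula is a quasisimilarity invariant. It remains to evaluate it on a Jordan operator $K=\bigoplus_iS(\psi_i)$. By Proposition~\ref{jordanblockprops}(ii) applied to $\phi\wedge\psi_i$ one gets $\overline{\ran\phi(S(\psi_i))}=(\phi\wedge\psi_i)H^2\ominus\psi_iH^2$, on which the restriction is unitarily equivalent to $S(\psi_i/(\phi\wedge\psi_i))$; this block is nonconstant precisely when $\psi_i\nmid\phi$. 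Since the $\psi_i$ form a divisibility chain these indices form an initial segment, and recalling that the multiplicity of a Jordan operator is its number of nonconstant blocks, $\mu(K|\overline{\ran\phi(K)})\le j$ iff $\psi_j\mid\phi$. Hence the competing set is $\psi_jH^\infty$, with meet $\psi_j$: the formula reproduces the blocks of any Jordan operator. Combined with the invariance just proved, this shows that two quasisimilar Jordan operators coincide, so once existence is established, $J$ is the unique Jordan model of $T$ and the formula holds by construction.

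There remains existence, $T\sim J$, which I would prove by induction on $n=\mu(T)$, treating finite $n$ first. For $n=1$ this is the classical statement that a cyclic operator of class $C_0$ is quasisimilar to $S(m_T)\equiv S(\theta_0)$. For the inductive step let $x$ be a maximal vector, $\mathcal M=\bigvee_kT^kx$, so $T|\mathcal M\cong S(\theta_0)$, and run the companion construction on the $C_0$ operator $T^*$: pick a maximal vector $y$ for $T^*$, set $\mathcal N=\bigvee_k(T^*)^ky$ and $\mathcal K=\mathcal N^{\perp}$, which is invariant for $T$. The target is $T\sim S(\theta_0)\oplus(T|\mathcal K)$ with $\mu(T|\mathcal K)=n-1$; granting it, the inductive hypothesis presents $T|\mathcal K$ as a Jordan operator $\bigoplus_{j\ge1}S(\rho_j)$ of multiplicity $n-1$, and since the minimal function of a restriction divides $m_T$ we have $\rho_1\mid\theta_0$, so the direct sum $S(\theta_0)\oplus\bigoplus_{j\ge1}S(\rho_j)$ is again a Jordan operator quasisimilar to $T$, hence equal to $J$. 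For infinite multiplicity I would instead build the two intertwiners directly, assembling $X\colon\bigoplus_jH(\theta_j)\to\hil$ from the functional-calculus maps determined by vectors $x_j$ with $m_{x_j}\equiv\theta_j$ chosen to form a cyclic set, suitably rescaled for norm convergence.

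The heart of the matter, and the step I expect to fight hardest with, is the splitting $T\sim S(\theta_0)\oplus(T|\mathcal K)$ together with the count $\mu(T|\mathcal K)=n-1$. One inequality, $S(\theta_0)\oplus(T|\mathcal K)\prec T$, requires the summation map $(m,k)\mapsto m+k$ on $\mathcal M\oplus\mathcal K$ to be a quasiaffinity, i.e. $\mathcal M\cap\mathcal K=\{0\}$ and $\mathcal M+\mathcal K$ dense; the reverse comes from running the construction on $T^*$ and dualizing. Both the general position of $\mathcal M$ relative to $\mathcal K$ and the exact multiplicity of $T|\mathcal K$ depend on choosing $x$ and $y$ generically, and this is exactly where Theorems~\ref{maxvector} and~\ref{ell1} are indispensable: the former supplies an abundance of maximal vectors, while the latter lets me select $\ell^1$-combinations that preserve the relevant greatest common inner divisors, thereby pinning down the minimal function and the multiplicity of $T|\mathcal K$. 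The same device resolves the only serious difficulty in the infinite-multiplicity construction, namely the injectivity of $X$: Theorem~\ref{ell1} is used to choose the coefficients of the assembling combination so that no nonzero vector is annihilated.
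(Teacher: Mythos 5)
The paper does not prove Theorem \ref{existencejordan}: it is quoted as background from Bercovici's monograph (Theorem 3.5.1 and Corollary 3.5.25 of \cite{bercOTA}), so there is no in-paper argument to measure yours against; I can only judge the proposal on its own terms. Your uniqueness half is essentially sound and nicely organized: taking the displayed formula as the definition of $\theta_j$, showing the right-hand side is a quasisimilarity invariant, and evaluating it on a Jordan operator does deliver uniqueness and the ``moreover'' clause in one stroke. Even there, though, you silently invoke two substantial theorems: that a cyclic $C_0$ operator is quasisimilar to $S(m_T)$ (the base case), and that a Jordan operator with $j+1$ nonconstant blocks has multiplicity exactly $j+1$ --- the lower bound here is Theorem 3.4.12 of \cite{bercOTA} and is not a ``recalled'' triviality; without it your evaluation of the formula on $\bigoplus_i S(\psi_i)$ collapses.

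The genuine gap is in the existence half, precisely at the step you flag as the one you ``expect to fight hardest with,'' and it is not merely unfinished: the construction as stated can fail. Take $T=S(\theta)\oplus S(\theta)$, $x=(h,0)$ with $h$ cyclic for $S(\theta)$ (a maximal vector for $T$), and $y=(0,g)$ with $g$ cyclic for $S(\theta)^*$ (a maximal vector for $T^*$). Then $\mathcal{M}=H(\theta)\oplus 0$, $\mathcal{N}=0\oplus H(\theta)$, and $\mathcal{K}=\mathcal{N}^{\perp}=H(\theta)\oplus 0=\mathcal{M}$, so $\mathcal{M}\cap\mathcal{K}\neq\{0\}$ and $\mathcal{M}+\mathcal{K}$ is not dense. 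The defect is structural: you choose $y$ maximal for $T^*$ on the whole space, independently of $x$, whereas the splitting principle requires the $*$-cyclic vector to be taken \emph{inside} $\mathcal{M}$ and cyclic for $(T|\mathcal{M})^*$ --- that is what ties $\mathcal{N}$ to $\mathcal{M}$ and forces the two subspaces into general position. Note also that Theorem \ref{splitting} cannot simply be cited here, since its statement already presupposes a (two-block) Jordan model; you would need a from-scratch splitting lemma, an explicit genericity condition on $(x,y)$, and then an actual argument for $\mu(T|\mathcal{K})\leq n-1$, which is the single hardest step of the theorem and for which you offer only a pointer to Theorems \ref{maxvector} and \ref{ell1}. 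Two further cautions: the ``reverse inequality by dualizing'' turns restrictions into compressions and does not obviously return the operator $S(\theta_0)\oplus(T|\mathcal{K})$; and you must not lean on Theorem \ref{injection} ($\prec\Rightarrow\sim$) inside the existence proof, since in \cite{bercOTA} that fact is itself derived from the Jordan model. The infinite-multiplicity case remains a one-sentence sketch.
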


The operator $J$ in the previous theorem is called the \textit{Jordan model} of $T$.

\begin{corollary}[\cite{bercOTA} Theorem 3.4.12]\label{multiplicityn}
Let $T$ be an operator of class $C_0$ with Jordan model
$\bigoplus_{j=0}^\infty S(\theta_j)$. Then, $T$ has multiplicity at
most $n$ if and only if $\theta_n\equiv1$.
\end{corollary}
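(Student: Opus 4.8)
The plan is to treat the two implications separately: the forward one I would read off directly from the explicit formula for $\theta_n$ in Theorem~\ref{existencejordan}, while the converse I would obtain from the concrete structure of the Jordan model $J=\bigoplus_{j=0}^\infty S(\theta_j)$ of $T$. For the converse I shall need that multiplicity is a quasisimilarity invariant, which is elementary: a quasiaffinity carries cyclic sets to cyclic sets, since if $X\colon\hil\to\hil'$ is a quasiaffinity with $XT=T'X$ and $E\subset\hil$ is cyclic for $T$, then the relation $(T')^kX=XT^k$ gives
$$
\bigvee_k(T')^kXE=\bigvee_kXT^kE\supseteq\ol{X\hil}=\hil',
$$
so that $XE$ is cyclic for $T'$. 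Applying this to the two intertwiners realizing $T\sim J$ shows that $T$ and $J$ have the same multiplicity.

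For the implication that a multiplicity at most $n$ forces $\theta_n\equiv1$, I would test the formula
$$
\theta_n\equiv\bigwedge\{\phi\in H^\infty: T|\ol{\ran\phi(T)}\text{ has multiplicity at most }n\}
$$
against the constant function $\phi\equiv1$. For this choice $\phi(T)=\id$, so $\ol{\ran\phi(T)}=\hil$ and $T|\ol{\ran\phi(T)}=T$, which by hypothesis has multiplicity at most $n$; hence $1$ belongs to the set on the right. As the greatest common inner divisor $\theta_n$ must divide every member of that set, we get $\theta_n|1$, and therefore $\theta_n\equiv1$.

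Conversely, suppose $\theta_n\equiv1$. Since $\theta_{j+1}|\theta_j$ for all $j$, divisibility propagates upward and forces $\theta_j\equiv1$, and hence $H(\theta_j)=\{0\}$, for every $j\geq n$; thus $J$ collapses to the finite orthogonal sum $\bigoplus_{j=0}^{n-1}S(\theta_j)$. Each block $S(\theta_j)$ is cyclic by Proposition~\ref{jordanblockprops}(i), and an orthogonal sum of $n$ cyclic operators is generated by the $n$ vectors obtained by placing a cyclic vector in one summand and zero in the others; hence $J$ has multiplicity at most $n$. By the invariance recorded above, so does $T$.

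The only step I expect to require genuine care is the converse. Being a greatest common divisor, the formula for $\theta_n$ delivers the upper bound $\theta_n\equiv1$ effortlessly once the multiplicity is small, but the triviality of a gcd is far too weak to recover any bound on the multiplicity on its own. This is precisely why the converse must be routed through the explicit structure of $J$ and the transfer of multiplicity across $T\sim J$, rather than through the formula itself.
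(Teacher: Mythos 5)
Your proof is correct. The paper itself offers no argument for this statement---it is quoted verbatim from Bercovici's book as Theorem 3.4.12 and presented as a corollary of Theorem \ref{existencejordan}---so there is nothing internal to compare against, but your derivation is exactly the one the labelling suggests: the forward implication falls out of the divisibility formula in Theorem \ref{existencejordan} by testing $\phi\equiv1$ (a greatest common inner divisor of a set containing $1$ must be a unimodular constant), and the converse correctly combines the collapse of $J$ to $\bigoplus_{j=0}^{n-1}S(\theta_j)$ with the cyclicity of each block from Proposition \ref{jordanblockprops} and the elementary fact that a quasiaffinity maps cyclic sets to cyclic sets, applied in both directions of $T\sim J$. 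Your closing remark is also apt: the gcd formula alone cannot yield the converse, which genuinely requires passing through the model operator.
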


\begin{theorem}[\cite{bercOTA} Corollary 3.1.7, Proposition 3.5.30]\label{jordanadjoint}
Let $T$ be an operator of class $C_0$ with Jordan model
$\bigoplus_{j=0}^\infty S(\theta_j)$. Then, $T^*$ is also of class
$C_0$ and its Jordan model is $\bigoplus_{j=0}^\infty
S(\theta_j^{\sim})$, where for any function $u\in H^\infty$ we
define $u^{\sim}(z)=\ol{u(\ol{z})}$ for $z\in \D$.
\end{theorem}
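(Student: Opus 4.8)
The plan is to reduce the statement to the single-block case and then transport it along the quasisimilarity to the Jordan model. I would begin by constructing the functional calculus of $T^*$. Writing $\Phi$ for the functional calculus of $T$, set $\Psi(u)=\Phi(u^{\sim})^*$ for $u\in H^\infty$. Since $u\mapsto u^{\sim}$ is a multiplicative, isometric, conjugate-linear, weak-star continuous involution of $H^\infty$, and $A\mapsto A^*$ is conjugate-linear and weak-star continuous on $\B(\hil)$, the map $\Psi$ is linear and inherits properties (i)--(iv) from $\Phi$; the only point requiring care is multiplicativity, which holds because $\ran\Phi$ is commutative. A one-line computation on coefficients shows $\Psi(p)=p(T^*)$ for polynomials $p$, so $\Psi$ is the functional calculus of $T^*$. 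Consequently $u(T^*)=(u^{\sim}(T))^*$ and $\ker\Psi=m_T^{\sim}H^\infty$; in particular $T^*$ is of class $C_0$ with $m_{T^*}\equiv m_T^{\sim}$.

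Next I would descend to the Jordan model. By Theorem~\ref{existencejordan} there are quasiaffinities $X,Y$ with $XT=JX$ and $TY=YJ$, where $J=\bigoplus_{j=0}^\infty S(\theta_j)$. Taking adjoints gives $T^*X^*=X^*J^*$ and $Y^*T^*=J^*Y^*$; since the adjoint of a quasiaffinity is again a quasiaffinity (taking adjoints interchanges injectivity with dense range), this shows $T^*\sim J^*=\bigoplus_{j=0}^\infty S(\theta_j)^*$.

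The heart of the argument is the single-block identity $S(\theta)^*\cong S(\theta^{\sim})$, and I insist on unitary equivalence rather than mere quasisimilarity because a direct sum of unitaries is automatically bounded, whereas a direct sum of quasiaffinities need not be. To obtain it I would compose two anti-unitaries on model spaces: the canonical conjugation $C$ on $H(\theta)$, given on boundary values by $Cf=\ol z\,\theta\,\ol f$, which satisfies $CS(\theta)C=S(\theta)^*$; and the ``tilde'' map $D\colon H(\theta)\to H(\theta^{\sim})$, $Df=f^{\sim}$, which is anti-unitary, carries $\theta H^2$ onto $\theta^{\sim}H^2$, and satisfies $DS(\theta)=S(\theta^{\sim})D$. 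Then $W=DC$ is a genuine unitary from $H(\theta)$ onto $H(\theta^{\sim})$, and $WS(\theta)^*=DCS(\theta)^*=DS(\theta)C=S(\theta^{\sim})DC=S(\theta^{\sim})W$, so $WS(\theta)^*W^{-1}=S(\theta^{\sim})$. Verifying that $C$ and $D$ preserve the relevant model spaces and intertwine the operators as stated is the main technical obstacle; the rest is formal.

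Finally I would assemble the pieces. Applying the block identity coordinatewise, $\bigoplus_j W_j$ is a unitary implementing $J^*=\bigoplus_j S(\theta_j)^*\cong\bigoplus_j S(\theta_j^{\sim})=:J'$. Because the tilde operation preserves innerness and divisibility (from $\theta_{j+1}\,|\,\theta_j$ one gets $\theta_{j+1}^{\sim}\,|\,\theta_j^{\sim}$), the operator $J'$ is itself a Jordan operator. Hence $T^*\sim J^*\cong J'$, so by transitivity $T^*\sim J'$, and the uniqueness clause of Theorem~\ref{existencejordan} identifies $J'=\bigoplus_{j=0}^\infty S(\theta_j^{\sim})$ as the Jordan model of $T^*$.
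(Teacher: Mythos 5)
The paper does not prove this statement: it is quoted as background from Bercovici's book (Corollary 3.1.7 and Proposition 3.5.30), so there is no internal proof to compare against. Your argument is correct and is essentially the standard one from that reference: the functional-calculus identity $u(T^*)=(u^{\sim}(T))^*$ shows $T^*$ is of class $C_0$ with $m_{T^*}\equiv m_T^{\sim}$; taking adjoints of the intertwining quasiaffinities gives $T^*\sim J^*$; the unitary $W=DC$ yields $S(\theta)^*\cong S(\theta^{\sim})$ blockwise, and since each $W_j$ is unitary the direct sum is bounded, so the uniqueness clause of Theorem \ref{existencejordan} finishes the argument. Your insistence on unitary rather than merely quasisimilar equivalence at the block level is precisely the right point of care, since an infinite direct sum of quasiaffinities need not be bounded below or even bounded.
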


The next result will be crucial for us; it is usually referred to as
the splitting principle.

\begin{theorem}[\cite{B} Proposition 1.17]\label{splitting}
Let $T\in \B(\hil)$ be an operator of class $C_0$ with Jordan model
$S(\theta_0)\oplus S(\theta_1)$. Let $K\subset \hil$ be an invariant subspace such that $T|K\sim S(\theta_0)$. Let $k\in K$ be
a cyclic vector for $(T|K)^*$ and set $K'=\bigvee_{n=0}^\infty
T^{*n}k$, $L=\hil\ominus K'$. Then, $\hil=K\vee L$, $K\cap L=\{0\}$
and $T|L\sim S(\theta_1)$.
\end{theorem}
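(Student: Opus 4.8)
The plan is to exhibit $L$ as the orthogonal complement of a maximal cyclic subspace for $T^{*}$. I read $K'$ as $\ol{\bigvee_{n=0}^{\infty}T^{*n}k}$, so that $K'$ is invariant for $T^{*}$ and $L=\hil\ominus K'$ is invariant for $T$ (this is what makes $T|L$ meaningful). First record the structural facts. Since $T|K\sim S(\theta_{0})$ it has multiplicity one and minimal function $\theta_{0}\equiv m_{T}$, so $(T|K)^{*}$ has multiplicity one with minimal function $\theta_{0}^{\sim}\equiv m_{T^{*}}$; this is exactly why a cyclic vector $k$ for $(T|K)^{*}$ exists. As $K$ is invariant for $T$, $K^{\perp}$ is invariant for $T^{*}$, so with respect to $\hil=K\oplus K^{\perp}$ the operator $T^{*}$ is block lower triangular with $(1,1)$ entry $(T|K)^{*}$; hence $P_{K}T^{*}=(T|K)^{*}P_{K}$ and, by weak-star continuity, $P_{K}u(T^{*})=u((T|K)^{*})P_{K}$ for all $u\in H^{\infty}$. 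I then claim $T^{*}|K'\sim S(\theta_{0}^{\sim})$: being $T^{*}$-cyclic on $k$, the operator $T^{*}|K'$ has multiplicity one with minimal function $m_{k}$, and $m_{k}\,|\,m_{T^{*}}\equiv\theta_{0}^{\sim}$; conversely if $u(T^{*})k=0$ then $u((T|K)^{*})k=P_{K}u(T^{*})k=0$, so cyclicity of $k$ forces $u((T|K)^{*})=0$, i.e. $\theta_{0}^{\sim}\,|\,u$, whence $\theta_{0}^{\sim}\,|\,m_{k}$ and $m_{k}\equiv\theta_{0}^{\sim}$.

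Next, $K\cap L=\{0\}$: if $x\in K\cap L$ then $\langle x,T^{*n}k\rangle=\langle T^{n}x,k\rangle=\langle x,((T|K)^{*})^{n}k\rangle=0$ for all $n$ (using $T^{n}x\in K$), and since the $((T|K)^{*})^{n}k$ span $K$ we get $x\perp K$, so $x=0$. For the density $\hil=K\vee L$, note $K\vee L=(K^{\perp}\cap K')^{\perp}$, so it suffices to show $N:=K'\cap K^{\perp}=\{0\}$. The map $P_{K}|K'\colon K'\to K$ intertwines $T^{*}|K'$ and $(T|K)^{*}$, has kernel exactly $N$, and has dense range because $P_{K}(K')\supseteq\bigvee_{n}((T|K)^{*})^{n}k=K$; it therefore induces a quasiaffinity from the quotient of $T^{*}|K'$ by $N$ onto $(T|K)^{*}$. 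Since a quasiaffine transform of $C_{0}$ operators preserves the minimal function, this quotient has minimal function $\theta_{0}^{\sim}$; but $T^{*}|K'\sim S(\theta_{0}^{\sim})$ has multiplicity one, so if $T^{*}|N\sim S(\nu)$ then (by Proposition \ref{jordanblockprops}, transported through the quasisimilarity) the quotient has minimal function $\theta_{0}^{\sim}/\nu$. Thus $\theta_{0}^{\sim}/\nu\equiv\theta_{0}^{\sim}$, forcing $\nu\equiv1$ and $N=\{0\}$.

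With $K\cap L=\{0\}$ and $K\vee L=\hil$ in hand, the addition map $\Xi(x,y)=x+y$ is a quasiaffinity from $(T|K)\oplus(T|L)$ onto $T$, and $\Xi^{*}$ is a quasiaffinity from $T^{*}$ onto $(T|K)^{*}\oplus(T|L)^{*}$. Let $S(\psi_{0})\oplus S(\psi_{1})$ be the Jordan model of $T|L$ (of multiplicity at most two, with $\psi_{1}\,|\,\psi_{0}\,|\,\theta_{0}$). Because $\psi_{1}\,|\,\psi_{0}\,|\,\theta_{0}$, the Jordan model of $(T|K)\oplus(T|L)$ is simply $S(\theta_{0})\oplus S(\psi_{0})\oplus S(\psi_{1})$. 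Quasiaffine transforms of $C_{0}$ operators divide one another's Jordan models entry by entry (this is the monotonicity built into the characterization of $\theta_{j}$ in Theorem \ref{existencejordan}); comparing through $\Xi$ with $J_{T}=S(\theta_{0})\oplus S(\theta_{1})$ gives $\psi_{1}\equiv1$ (so $T|L$ has multiplicity one) and $\psi_{0}\,|\,\theta_{1}$, while comparing through $\Xi^{*}$, using $J_{T^{*}}=S(\theta_{0}^{\sim})\oplus S(\theta_{1}^{\sim})$ from Theorem \ref{jordanadjoint}, gives $\theta_{1}^{\sim}\,|\,\psi_{0}^{\sim}$, i.e. $\theta_{1}\,|\,\psi_{0}$. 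Hence $\psi_{0}\equiv\theta_{1}$ and $T|L\sim S(\theta_{1})$.

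I expect the density $\hil=K\vee L$ to be the real obstacle. It is precisely here that the hypothesis ``$k$ cyclic for $(T|K)^{*}$'' is used in an essential way (through the identification $T^{*}|K'\sim S(\theta_{0}^{\sim})$), and it is here that one must upgrade the dense-range intertwiner $P_{K}|K'$ to an injective one. A naive attempt to do this by adjoint duality is circular (injectivity of $P_{K}|K'$ and density of its range are the two halves of the same statement), so the argument has to go through minimal functions of the induced quotient operator. Once density is secured, the final step is essentially Jordan-model bookkeeping, provided one quotes the entrywise monotonicity of Jordan models under quasiaffine transform.
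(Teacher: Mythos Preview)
The paper does not prove this result; it is quoted from \cite{B}, so there is no in-paper argument to compare against.

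Your reading of the statement is the correct one. As printed, $K'=\bigvee_{n}(T|K)^{*n}k$ would simply equal $K$ (since $k$ is cyclic for $(T|K)^{*}$), making $L=K^{\perp}$, which is not $T$-invariant and renders $T|L$ meaningless. The intended definition is $K'=\bigvee_{n}T^{*n}k$; this is confirmed by the way the theorem is applied in the proof of Proposition~\ref{main2}, where the subspaces $K_{M},K_{M'},K_{E}$ are generated using the adjoint of the ambient operator, not of the restriction to $K$.

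Your argument is sound. The intertwining $P_{K}u(T^{*})=u((T|K)^{*})P_{K}$ gives both the identification $T^{*}|K'\sim S(\theta_{0}^{\sim})$ and the dense range of $P_{K}|K'$, and the step $N=K'\cap K^{\perp}=\{0\}$ is handled correctly: $T^{*}|K'$ has multiplicity one, so Theorem~\ref{det} (or equivalently Theorem~\ref{invsubmult1}) forces the restriction $T^{*}|N$ to have constant minimal function once the compression to $K'\ominus N$ is seen to be quasisimilar to $S(\theta_{0}^{\sim})$.

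One simplification in your final step: since $\Xi$ is already a quasiaffinity, Theorem~\ref{injection} gives $(T|K)\oplus(T|L)\sim T$ outright, hence equality of Jordan models. Every inner function $\psi_{j}$ in the Jordan model $\bigoplus_{j}S(\psi_{j})$ of $T|L$ divides $m_{T}=\theta_{0}$, so $S(\theta_{0})\oplus\bigoplus_{j}S(\psi_{j})$ is itself a Jordan operator, and uniqueness forces $\psi_{0}\equiv\theta_{1}$ and $\psi_{j}\equiv 1$ for $j\geq 1$. There is no need to argue through $\Xi$ and $\Xi^{*}$ separately, nor to invoke an entrywise monotonicity principle.
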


To an operator of class $C_0$ with finite multiplicity we can
associate an inner function called its \textit{determinant}: if
$\bigoplus_{j=0}^\infty S(\theta_j)$ is the Jordan model of the
operator $T$ which has multiplicity $n$, then $\det T=\theta_0
\theta_1\cdots \theta_{n-1}$. The following result is helpful when
calculating the functions appearing in the Jordan model of an
operator. Given an invariant subspace $M$ for an operator
$T$, we denote by $T_{M^\perp}$ the compression
$P_{M^\perp}T|M^\perp$.

\begin{theorem}[\cite{bercOTA} Theorem 7.1.4]\label{det}
Let $T$ be an operator of class $C_0$ with finite multiplicity, and let $M$ be an invariant subspace for $T$. Then, $\det T=\det (T|M) \det(T_{M^\perp})$.
\end{theorem}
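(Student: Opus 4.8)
The plan is to pass from the operator determinant to the scalar determinant of the characteristic function and to exploit the factorization of characteristic functions induced by the invariant subspace $M$. Since $T$ is of class $C_0$ with finite multiplicity $n$, it is a completely non-unitary contraction whose two defect indices both equal $n$; its characteristic function $\Theta_T$ is therefore a square, two-sided inner $n\times n$ matrix function. The definition of $\det T$ through the Jordan model (Theorem \ref{existencejordan}) together with the standard computation of the characteristic function of a Jordan operator identifies the scalar determinant with the operator determinant, $\det\Theta_T\equiv\theta_0\theta_1\cdots\theta_{n-1}\equiv\det T$. The idea is then to factor $\Theta_T$ according to $M$ and take scalar determinants.

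First I would record that the two factors are themselves of class $C_0$ with finite multiplicity, so that their determinants are defined: $T|M$ is $C_0$ as the restriction of a $C_0$ operator to an invariant subspace, while $(T_{M^\perp})^*=T^*|M^\perp$ is the restriction of the $C_0$ operator $T^*$ (Theorem \ref{jordanadjoint}) to its invariant subspace $M^\perp$, so $T_{M^\perp}$ is $C_0$ as well; both have multiplicity at most $n$ (Corollary \ref{multiplicityn}). The heart of the argument is then the Sz.-Nagy--Foias correspondence between invariant subspaces and regular factorizations of the characteristic function: the subspace $M$ produces a regular factorization $\Theta_T=\Theta_2\Theta_1$ in which $\Theta_1$ agrees, up to constant unitary factors, with the characteristic function of $T_1=T|M$ and $\Theta_2$ with that of $T_2=T_{M^\perp}$. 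Taking scalar determinants pointwise gives $\det\Theta_T\equiv\det\Theta_2\,\det\Theta_1$, and translating each side back through the identification above yields $\det T\equiv\det(T_{M^\perp})\det(T|M)$, which is the assertion.

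The main obstacle is the bookkeeping of matrix sizes. The multiplicities of $T_1$ and $T_2$ may be strictly smaller than $n$, so a priori their characteristic functions are inner matrix functions of smaller size, whereas the scalar determinant is multiplicative only for square matrices of matching dimension. I would handle this by padding each $\Theta_i$ with an identity block to make it $n\times n$, which leaves its scalar determinant unchanged; the \emph{regularity} of the factorization is exactly what guarantees that the intermediate defect space can be realized of dimension $n$, so that $\Theta_T=\Theta_2\Theta_1$ becomes a genuine product of square $n\times n$ inner functions to which pointwise multiplicativity of $\det$ applies. Checking that the padded factors are still the characteristic functions of $T_1$ and $T_2$ up to unimodular constants --- which do not affect the relation $\equiv$ --- is the one point requiring genuine care.

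A more self-contained alternative, closer to the tools developed here, is induction on the multiplicity $n$, peeling off a cyclic summand by the splitting principle (Theorem \ref{splitting} and its general form). The base case $n=1$ reduces to $m_{T|M}\,m_{T_{M^\perp}}\equiv m_T$, where one divisibility is immediate: since $M^\perp$ is semi-invariant one has $u(T_{M^\perp})=P_{M^\perp}u(T)|M^\perp$, so $m_{T_{M^\perp}}(T)\hil\subseteq M$, while $m_{T|M}(T)|M=0$, whence $(m_{T|M}m_{T_{M^\perp}})(T)=0$ and $m_T\mid m_{T|M}\,m_{T_{M^\perp}}$. The reverse divisibility, which genuinely uses the multiplicity-one structure, is the crux of this route, and arranging the inductive step so that the splitting respects $M$ for $n\ge 2$ is precisely where the combinatorics governing the Jordan models of $T$, $T|M$ and $T_{M^\perp}$ would have to be controlled.
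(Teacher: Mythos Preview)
The paper does not prove this statement; it is quoted without proof from \cite{bercOTA} (Theorem 7.1.4), and the text moves directly to the next cited result. There is therefore no proof in the paper to compare your proposal against.

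That said, a brief comment on the proposal itself. Your first route, via the regular factorization of the characteristic function associated to the invariant subspace, is indeed the standard argument and is essentially how the cited reference proceeds. One point needs more care: your assertion that a $C_0$ operator of multiplicity $n$ automatically has both defect indices equal to $n$ is not justified as stated. Multiplicity is a quasisimilarity invariant while defect indices are not, so an operator quasisimilar to an $n$-block Jordan model need not have $n$-dimensional defect spaces, and $\Theta_T$ need not be an $n\times n$ matrix function. The identification $\det\Theta_T\equiv\det T$ and the padding argument you describe therefore require either a reduction to the Jordan model first or an appropriate generalized notion of scalar determinant for inner operator functions; this is handled in \cite{bercOTA} but is not as immediate as your sketch suggests. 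Your second route, induction on multiplicity via the splitting principle, is plausible in outline, but as you acknowledge, arranging the splitting to respect $M$ in the inductive step is exactly the nontrivial part and is not addressed.
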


A consequence is the following.

\begin{theorem}[\cite{bercOTA} Remark 7.1.15, Proposition 7.1.21]\label{latticeisom}
Let $T\in \B(\hil)$ and $T'\in\B(\hil')$ be two operators of class
$C_0$ with finite multiplicities such that $\det T=\det T'$. Let
$X:\hil\to \hil'$ be a bounded linear operator such that $XT=T'X$.
Then, $X$ is one-to-one if and only if it has dense range.
\end{theorem}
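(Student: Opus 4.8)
The plan is to read off both implications at once from the multiplicativity of the determinant, Theorem \ref{det}. First I would record the two natural invariant subspaces attached to $X$: the kernel $M=\ker X$, which is invariant for $T$ because $XT=T'X$, and the closure of the range $N=\ol{\ran X}$, which is invariant for $T'$ for the same reason. The restriction $\hat X=X|_{M^\perp}\colon M^\perp\to N$ is then injective (its kernel is $M^\perp\cap M=\{0\}$) and has dense range (its range is $\ran X$, dense in $N$), and a short computation using $XP_M=0$ shows that $\hat X T_{M^\perp}=(T'|N)\hat X$, since $\hat X T_{M^\perp}v=XP_{M^\perp}Tv=XTv=T'Xv$ for $v\in M^\perp$. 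Thus $T_{M^\perp}$ and $T'|N$ are two operators of class $C_0$ with finite multiplicity joined by a quasiaffinity.

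The key lemma I would isolate is that \emph{a quasiaffinity between operators of class $C_0$ with finite multiplicity preserves the determinant}; granting it, $\det(T_{M^\perp})\equiv\det(T'|N)$. To prove the lemma I would first reduce to Jordan operators: composing $\hat X$ with the quasiaffinities implementing the quasisimilarities of $T_{M^\perp}$ and $T'|N$ with their Jordan models (Theorem \ref{existencejordan}) yields a quasiaffinity $Y$ between two Jordan operators, and it suffices to match their models. For this I would use the intrinsic description of the model functions in Theorem \ref{existencejordan}. Given any inner $\phi$, the subspaces $\ol{\ran\phi(T_{M^\perp})}$ and $\ol{\ran\phi(T'|N)}$ are invariant and $Y$ carries one onto a dense subspace of the other, injectively, so the restrictions of the two operators to these subspaces are again joined by a quasiaffinity. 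Such operators have equal multiplicity: dense range gives one inequality by pushing a cyclic set forward, and the reverse follows by passing to adjoints, using that a $C_0$ operator and its adjoint have the same multiplicity (Theorem \ref{jordanadjoint} together with Corollary \ref{multiplicityn}). Hence the sets of inner functions $\phi$ for which the respective restriction has multiplicity at most $j$ coincide for every $j$, so the Jordan models agree and in particular the determinants agree. This lemma is where essentially all the work lies and is the main obstacle.

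With the lemma in hand I would finish by applying Theorem \ref{det} to both invariant subspaces,
$$\det T=\det(T|M)\,\det(T_{M^\perp}),\qquad \det T'=\det(T'|N)\,\det(T'_{N^\perp}).$$
Since $\det T=\det T'$ and $\det(T_{M^\perp})\equiv\det(T'|N)$, cancelling the common inner factor yields $\det(T|M)\equiv\det(T'_{N^\perp})$. For a $C_0$ operator of finite multiplicity the determinant is the product of its model functions, so it is $\equiv 1$ exactly when every model function is constant, that is, exactly when the operator acts on $\{0\}$. It follows that $M=\{0\}$ if and only if $\det(T|M)\equiv 1$, if and only if $\det(T'_{N^\perp})\equiv 1$, if and only if $N^\perp=\{0\}$; in other words $X$ is injective if and only if it has dense range. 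Note that this argument delivers both implications simultaneously and symmetrically, so no separate passage to the adjoint is required.
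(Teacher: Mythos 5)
The paper does not prove this statement; it is imported verbatim from \cite{bercOTA} (Remark 7.1.15, Proposition 7.1.21), so there is no in-paper proof to compare against. Your argument is correct, and it is in fact the standard proof of this result: pass to $M=\ker X$ and $N=\ol{\ran X}$, observe that $\hat X=X|M^\perp$ is a quasiaffinity intertwining $T_{M^\perp}$ and $T'|N$, conclude that these two operators have the same determinant, and then cancel inside $\det T=\det(T|M)\det(T_{M^\perp})=\det(T'|N)\det(T'_{N^\perp})=\det T'$ to get $\det(T|M)\equiv\det(T'_{N^\perp})$, which vanishes (i.e.\ is $\equiv 1$) precisely when the corresponding subspace is trivial. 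The one place where you do far more work than necessary is the ``key lemma'': that a quasiaffinity between $C_0$ operators of finite multiplicity forces equal determinants is immediate from results already quoted in the paper, since Theorem \ref{injection} upgrades $T_{M^\perp}\prec T'|N$ to $T_{M^\perp}\sim T'|N$, and the uniqueness of the Jordan model in Theorem \ref{existencejordan} then gives equality of the model functions, hence of the determinants. Your hands-on re-derivation via the multiplicities of $T|\ol{\ran\phi(T)}$ and the adjoint trick is sound, but it amounts to reproving that machinery; citing Theorems \ref{injection} and \ref{existencejordan} would shorten the argument to a few lines without changing its substance.
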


We now collect some facts about invariant subspaces for operators of class $C_0$.

\begin{theorem}[\cite{bercOTA} Theorem 3.2.13]\label{invsubmult1}
Let $T$ be an operator of class $C_0$ with multiplicity one. Then, for every inner divisor $\theta$ of $m_T$, there exists a unique invariant subspace $M$ such that $T|M\sim S(\theta)$, namely $M=\ker \theta(T)=\ol{\ran (m_T/\theta)(T)}$.
\end{theorem}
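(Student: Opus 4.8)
The plan is to establish existence, identification of the subspace, and uniqueness in turn, using the cyclic vector supplied by the multiplicity-one hypothesis together with the determinant formula (Theorem~\ref{det}) and the rigidity furnished by Theorem~\ref{latticeisom}. Throughout write $m=m_T$ and $\psi=m/\theta$, so that $m=\theta\psi$; since $T$ has multiplicity one it possesses a cyclic vector $x$, for which necessarily $m_x\equiv m$. For the existence step I would first show that $N:=\overline{\ran\psi(T)}$ satisfies $T|N\sim S(\theta)$. The vector $y:=\psi(T)x$ is cyclic for $T|N$, because $\bigvee_{n\ge0}T^n y=\overline{\ran\psi(T)}=N$ (as $x$ is cyclic and $\psi(T)$ commutes with $T$); hence $T|N$ has multiplicity one. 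Its minimal function is the minimal function $m_y$ of $y$, and $w(T)y=(w\psi)(T)x=0$ holds precisely when $m\mid w\psi$, i.e. when $\theta\mid w$; thus $m_y\equiv\theta$. A multiplicity-one operator of class $C_0$ with minimal function $\theta$ has Jordan model $S(\theta)$ by Theorem~\ref{existencejordan} and Corollary~\ref{multiplicityn}, so $T|N\sim S(\theta)$ and $\det(T|N)\equiv\theta$.

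For the identification, set $M:=\ker\theta(T)$. Since $\theta(T)\psi(T)=m(T)=0$ we get $N\subseteq M$, and the inclusion $N\hookrightarrow M$ intertwines $T|N$ with $T|M$. To force equality I would compute $\det(T|M)$, and the cleanest route is through the compression. Here $M^\perp=\overline{\ran\theta^{\sim}(T^*)}$ (because $\theta(T)^*=\theta^{\sim}(T^*)$) and $(T_{M^\perp})^*=T^*|M^\perp$. Now $T^*$ is again of class $C_0$ with multiplicity one and minimal function $m^{\sim}$ (Theorem~\ref{jordanadjoint}), so applying the existence step to $T^*$ with the divisor $\theta^{\sim}$ of $m^{\sim}$ gives $(T_{M^\perp})^*=T^*|\overline{\ran\theta^{\sim}(T^*)}\sim S(\psi^{\sim})$; since $\det(A^*)\equiv(\det A)^{\sim}$ by Theorem~\ref{jordanadjoint}, this yields $\det(T_{M^\perp})\equiv\psi$. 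Theorem~\ref{det} then gives $\det(T|M)\equiv m/\psi\equiv\theta\equiv\det(T|N)$, so Theorem~\ref{latticeisom} applied to the injective intertwiner $N\hookrightarrow M$ (the two restrictions having equal determinant) shows it has dense range, whence $M=\overline N=N$. This simultaneously proves $\ker\theta(T)=\overline{\ran(m/\theta)(T)}$ and $T|M\sim S(\theta)$.

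For uniqueness, let $M'$ be any invariant subspace with $T|M'\sim S(\theta)$. Then $m_{T|M'}\equiv\theta$, so $\theta(T)|M'=\theta(T|M')=0$ and therefore $M'\subseteq\ker\theta(T)=M$. Both restrictions $T|M'$ and $T|M$ are quasisimilar to $S(\theta)$ and hence have determinant $\theta$, so applying Theorem~\ref{latticeisom} once more to the inclusion $M'\hookrightarrow M$ forces $M'=M$, as desired.

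I expect the main obstacle to be the identification step: upgrading the trivial inclusion $\overline{\ran\psi(T)}\subseteq\ker\theta(T)$ to an equality. The reverse inclusion is not visible directly from the functional calculus, and the argument above circumvents it by pinning down $\det(T|M)$ through the dual operator $T^*$ and then invoking the injective-iff-dense-range dichotomy of Theorem~\ref{latticeisom}, which is precisely the device that converts a coincidence of determinants into the equality of nested invariant subspaces. (One could alternatively try to exhibit a cyclic vector for $T|\ker\theta(T)$ directly, but the determinant computation through $T^*$ is more economical given the tools at hand.)
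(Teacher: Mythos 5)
This theorem is quoted in the paper from \cite{bercOTA} without proof, so there is no internal argument to compare yours against. The textbook route transfers the whole problem to the model operator $S(m_T)$ via the lattice isomorphism $M\mapsto\ol{XM}$ induced by a quasiaffinity realizing $T\sim S(m_T)$, and then reads off the invariant subspaces of $S(m_T)$ from Proposition \ref{jordanblockprops}(ii). Your argument stays inside $\hil$ and leans on determinants instead; in particular, computing $\det(T_{M^\perp})$ by passing to $T^*|M^\perp=(T_{M^\perp})^*$ and applying your existence step to $T^*$ is correct and is a genuinely different, rather economical device. The existence step itself ($y=\psi(T)x$ is cyclic for $T|\ol{\ran\psi(T)}$ with $m_y\equiv\theta$) and the uniqueness step are sound; in the uniqueness step the finite-multiplicity hypothesis of Theorem \ref{latticeisom} is automatic, since $T|M'\sim S(\theta)$ forces $T|M'$ to have multiplicity one.

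The one point you must still justify is in the identification step: Theorem \ref{latticeisom}, as stated, requires \emph{both} operators to have finite multiplicity, and you apply it to the inclusion $N\hookrightarrow M$ with $M=\ker\theta(T)$ without checking that $T|M$ has finite multiplicity. Knowing $\det(T|M)\equiv\theta$ from Theorem \ref{det} does not by itself supply this, because the paper only defines the determinant for finite-multiplicity operators, and nothing stated in the paper rules out a priori that $T|M$ has large multiplicity. The needed fact --- that the restriction of a $C_0$ operator to an invariant subspace has multiplicity at most that of the operator --- is standard and is in \cite{bercOTA}, but it is not among the results quoted here, so you should cite it explicitly; with that citation the argument closes. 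A minor slip: you say you apply the existence step to $T^*$ ``with the divisor $\theta^{\sim}$,'' but to land on $T^*|\ol{\ran\theta^{\sim}(T^*)}$ you must take the divisor $\psi^{\sim}$ of $m^{\sim}$, so that the range being closed up is that of $(m^{\sim}/\psi^{\sim})(T^*)=\theta^{\sim}(T^*)$; the conclusion $\sim S(\psi^{\sim})$ that you actually draw is the correct one.
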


Given an operator $T\in \B(\hil)$, we denote its commutant by  $\{T\}'=\{X\in \B(\hil):XT=TX\}$.
Recall that a closed subspace $M$ is said to be \textit{hyperinvariant} for $T$ if it is invariant for every operator $X\in \{T\}'$.

\begin{theorem}[\cite{bercOTA} Proposition 4.2.1]\label{hyperinvJordan}
Let $\bigoplus_{j=0}^\infty S(\theta_j)$ be a Jordan operator. Then,
a subspace $M$ is hyperinvariant if and only if it is of the form
$$
K=\bigoplus_{j=0}^\infty \left(\psi_j H^2\ominus \theta_j H^2\right)
$$
where $\psi_j|\theta_j, \psi_{j+1}|\psi_j$ and $(\theta_{j+1}/\psi_{j+1})|(\theta_j/\psi_j)$ for every $j\geq 0$.
\end{theorem}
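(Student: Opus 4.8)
The plan is to describe the commutant $\{J\}'$ in block-matrix form and to test hyperinvariance entry by entry. Writing $J=\bigoplus_{k\ge 0}S(\theta_k)$ on $\hil=\bigoplus_{k\ge 0}H(\theta_k)$, every $X\in\{J\}'$ has a matrix $(X_{ik})$ whose entries satisfy the block intertwining relation $X_{ik}S(\theta_k)=S(\theta_i)X_{ik}$, and conversely each coordinate projection $P_j$ onto $H(\theta_j)$ lies in $\{J\}'$ because the summands reduce $J$. Throughout I abbreviate $\chi_j=\theta_j/\psi_j$, so that the stated hypotheses become $\psi_{j+1}\mid\psi_j$ and $\chi_{j+1}\mid\chi_j$, and I record two elementary facts drawn from Proposition \ref{jordanblockprops}: for inner divisors $\phi_1,\phi_2$ of $\theta$ one has $\phi_1 H^2\ominus\theta H^2\subseteq\phi_2 H^2\ominus\theta H^2$ exactly when $\phi_2\mid\phi_1$, and for an arbitrary $u\in H^\infty$ one has $\overline{\ran\, u(S(\theta))}=(u\wedge\theta)H^2\ominus\theta H^2$ and $\ker u(S(\theta))=(\theta/(u\wedge\theta))H^2\ominus\theta H^2$, the passage from $u$ to $u\wedge\theta$ being justified by splitting off a cyclic factor (Proposition \ref{jordanblockprops}(iii)).

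For sufficiency, given $K=\bigoplus_k M_k$ with $M_k=\psi_k H^2\ominus\theta_k H^2$, it is enough to show $X_{ik}M_k\subseteq M_i$ for every entry of every $X\in\{J\}'$, since $XK\subseteq K$ then follows by summing components. I would argue in two regimes, using only the relation $X_{ik}u(S(\theta_k))=u(S(\theta_i))X_{ik}$. When $i\ge k$ I write $M_k=\ran\,\psi_k(S(\theta_k))$ and $M_i=\ker\chi_i(S(\theta_i))$; for $h=\psi_k(S(\theta_k))g$ one computes $\chi_i(S(\theta_i))X_{ik}h=(\chi_i\psi_k)(S(\theta_i))X_{ik}g$, which vanishes because $\psi_i\mid\psi_k$ forces $\theta_i\mid\chi_i\psi_k$. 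When $i<k$ I instead push the kernel forward: from $M_k=\ker\chi_k(S(\theta_k))$ one gets $\chi_k(S(\theta_i))X_{ik}h=X_{ik}\chi_k(S(\theta_k))h=0$, so $X_{ik}h$ lands in $\ker\chi_k(S(\theta_i))=(\theta_i/\chi_k)H^2\ominus\theta_i H^2$, where $\chi_k\wedge\theta_i=\chi_k$ because $\chi_k\mid\chi_i\mid\theta_i$; the containment $(\theta_i/\chi_k)H^2\ominus\theta_i H^2\subseteq M_i$ is then precisely the condition $\chi_k\mid\chi_i$.

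For necessity, I first exploit the projections: since $M$ is hyperinvariant and each $P_j\in\{J\}'$, a standard argument gives $M=\bigoplus_k (M\cap H(\theta_k))$, and each $M_k:=M\cap H(\theta_k)$ is invariant for $S(\theta_k)$, hence of the form $\psi_k H^2\ominus\theta_k H^2$ with $\psi_k\mid\theta_k$ by Proposition \ref{jordanblockprops}(ii). To extract the two chain conditions I feed $M$ two specific members of $\{J\}'$, each supported on a single off-diagonal pair of blocks (Theorem \ref{commutantJordan} guarantees these are genuine intertwiners). Multiplication by the inner function $\theta_j/\theta_{j+1}$ is an intertwiner $H(\theta_{j+1})\to H(\theta_j)$, so hyperinvariance yields $(\theta_j/\theta_{j+1})M_{j+1}\subseteq M_j$; since the left-hand side equals $(\theta_j\psi_{j+1}/\theta_{j+1})H^2\ominus\theta_j H^2$, the containment criterion gives $\psi_j\mid\theta_j\psi_{j+1}/\theta_{j+1}$, that is, $\chi_{j+1}\mid\chi_j$. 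Dually, the compression $P_{H(\theta_{j+1})}|H(\theta_j)$ is a surjective intertwiner $H(\theta_j)\to H(\theta_{j+1})$, whence $\overline{P_{H(\theta_{j+1})}M_j}=\overline{\ran\,\psi_j(S(\theta_{j+1}))}=(\psi_j\wedge\theta_{j+1})H^2\ominus\theta_{j+1}H^2$; containment in $M_{j+1}$ forces $\psi_{j+1}\mid\psi_j\wedge\theta_{j+1}$, hence $\psi_{j+1}\mid\psi_j$.

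I expect the main obstacle to be the sufficiency direction, specifically organizing the entrywise check into the two regimes $i\ge k$ and $i<k$, and recognizing that the entries below the diagonal must be controlled through the \emph{kernel} description (the functions $\chi_j$) rather than the range description (the functions $\psi_j$). Getting the range and kernel formulas right for functions that need not divide the relevant $\theta$ is the one delicate point, and it is resolved by the splitting-off-a-cyclic-factor observation from Proposition \ref{jordanblockprops}(iii).
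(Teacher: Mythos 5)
The paper does not prove this statement: it is quoted verbatim as Proposition 4.2.1 of Bercovici's book \cite{bercOTA}, so there is no in-paper proof to compare against. Judged on its own, your argument is correct and is essentially the standard one (the same commutant-as-matrices device that the paper itself uses in the $2\times 2$ case in Lemma \ref{invert}): the entrywise reduction of hyperinvariance, the two regimes $i\ge k$ (range description via $\psi$) and $i<k$ (kernel description via $\chi_j=\theta_j/\psi_j$), and the two test intertwiners $f\mapsto(\theta_j/\theta_{j+1})f$ and $P_{H(\theta_{j+1})}|H(\theta_j)$ for necessity all check out, and the one delicate point you flag --- the formulas $\overline{\ran\,u(S(\theta))}=(u\wedge\theta)H^2\ominus\theta H^2$ and $\ker u(S(\theta))=(\theta/(u\wedge\theta))H^2\ominus\theta H^2$ for $u$ not necessarily an inner divisor of $\theta$ --- is handled correctly.
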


Let us mention elementary facts about matrices of operators. Here and throughout, we identify a function $u\in H^\infty$ with the multiplication operator $f\mapsto uf$ it defines on $H^2$.

\begin{lemma}\label{invert}
Let  $\theta_0,\theta_1\in H^\infty$ be two inner functions such that $\theta_1|\theta_0$. Let $\A\subset M_2(H^\infty)$ be the subalgebra of matrices $A=(a_{ij})_{i,j=0,1}$ such that $\theta_0/\theta_1$ divides $a_{01}$. Let $\hil=H(\theta_0)\oplus H(\theta_1)$ and $T=S(\theta_0)\oplus S(\theta_1)$.
Then, the map
$$
\Psi: \A\to \{T\}'
$$
$$
\Psi(A)= P_{\hil}A|\hil
$$
is a surjective algebra homomorphism, with
$$
\ker \Psi=
\left\{
\left(\begin{array}{cc}
\theta_0 b_{00} & \theta_0 b_{01}\\
\theta_1 b_{10} & \theta_1 b_{11}\end{array}\right): b_{ij}\in H^\infty, i,j=0,1
\right\}.
$$
For every $A\in \A$, there exists $A'\in \A$ such that the operator $\Psi(A')$ satisfies $\Psi(A)\Psi(A')=\Psi(A')\Psi(A)=u(T)$, where $u=\det A$. When $\det A \wedge \theta_0 \equiv1$ we have that $\Psi(A)$ and $\Psi(A')$ are quasiaffinities.
\end{lemma}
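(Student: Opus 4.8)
The plan is to realize $\{T\}'$ concretely inside the model $\hil=H(\theta_0)\oplus H(\theta_1)\subset H^2\oplus H^2$ and to read off all four assertions from the block structure of $\Psi(A)$, with Theorem \ref{commutantJordan} as the main input. Writing $\Psi(A)$ as a $2\times2$ block matrix whose $(i,j)$ entry is $P_{H(\theta_i)}a_{ij}(S)|H(\theta_j)\colon H(\theta_j)\to H(\theta_i)$, the converse half of Theorem \ref{commutantJordan} shows that such a block intertwines $S(\theta_j)$ and $S(\theta_i)$ precisely when $\theta_i\mid a_{ij}\theta_j$. For $(i,j)\neq(0,1)$ this is automatic because $\theta_1\mid\theta_0$, while for $(i,j)=(0,1)$ it amounts to $(\theta_0/\theta_1)\mid a_{01}$, which is exactly the defining condition of $\A$; hence $\Psi(\A)\subset\{T\}'$. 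Conversely, any $X\in\{T\}'$ splits into four intertwining blocks, each of which, by the direct half of Theorem \ref{commutantJordan}, has the form $P_{H(\theta_i)}a_{ij}(S)|H(\theta_j)$ for some $a_{ij}\in H^\infty$; assembling these yields $A\in\A$ with $\Psi(A)=X$, so $\Psi$ is onto.

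The crux is multiplicativity, since compressions do not compose in general, and I would handle it by semi-invariance. Regarding every $A\in\A$ as the multiplication operator $M_A$ on $H^2\oplus H^2$, I claim the subspace $\mathcal{N}=\theta_0 H^2\oplus\theta_1 H^2$ is $M_A$-invariant. Indeed, the first component of $M_A(\theta_0 f\oplus\theta_1 g)$ is $a_{00}\theta_0 f+a_{01}\theta_1 g$, which lies in $\theta_0 H^2$ because $(\theta_0/\theta_1)\mid a_{01}$ forces $a_{01}\theta_1\in\theta_0 H^\infty$; the second component lies in $\theta_1 H^2$ because $\theta_1\mid\theta_0$. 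Since $\hil=(H^2\oplus H^2)\ominus\mathcal{N}$ is the difference of two $\A$-invariant subspaces, Sarason's lemma on semi-invariant subspaces (see \cite{Sar}) gives $P_{\hil}M_A M_B|\hil=(P_{\hil}M_A|\hil)(P_{\hil}M_B|\hil)$ for all $A,B\in\A$, i.e. $\Psi(AB)=\Psi(A)\Psi(B)$; linearity being clear, $\Psi$ is an algebra homomorphism. This verification, together with the bookkeeping of the divisibility condition defining $\A$, is where I expect the real work to lie.

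For the kernel, $\Psi(A)=0$ if and only if each of its four blocks vanishes. The diagonal blocks are $a_{ii}(S(\theta_i))$, which vanish exactly when $\theta_i\mid a_{ii}$. For the $(1,0)$ block, restricting its domain to $H(\theta_1)\subset H(\theta_0)$ gives $a_{10}(S(\theta_1))$, so its vanishing forces $\theta_1\mid a_{10}$; conversely $\theta_1\mid a_{10}$ sends all of $H(\theta_0)$ into $\theta_1 H^2$. For the $(0,1)$ block, writing $a_{01}=(\theta_0/\theta_1)c$ reduces the condition to $c(S(\theta_1))=0$, that is $\theta_1\mid c$, equivalently $\theta_0\mid a_{01}$. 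Collecting these four conditions produces exactly the stated description of $\ker\Psi$.

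Finally, for the last two assertions I would take $A'$ to be the classical adjugate
$$A'=\begin{pmatrix} a_{11} & -a_{01}\\ -a_{10} & a_{00}\end{pmatrix},$$
which lies in $\A$ because $(\theta_0/\theta_1)\mid a_{01}$ survives negation, and which satisfies $AA'=A'A=(\det A)\,I_2$ in $M_2(H^\infty)$. Applying the homomorphism $\Psi$ gives $\Psi(A)\Psi(A')=\Psi(A')\Psi(A)=\Psi((\det A)I_2)=u(S(\theta_0))\oplus u(S(\theta_1))=u(T)$ with $u=\det A$. If moreover $u\wedge\theta_0\equiv1$, then $u\wedge\theta_1\equiv1$ as well because $\theta_1\mid\theta_0$, so by Proposition \ref{jordanblockprops}(iii) both $u(S(\theta_0))$ and $u(S(\theta_1))$ are injective with dense range; hence $u(T)$ is a quasiaffinity. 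The factorizations $u(T)=\Psi(A)\Psi(A')=\Psi(A')\Psi(A)$ then force each of $\Psi(A)$ and $\Psi(A')$ to be injective (from the kernel of the product) and to have dense range (from the range of the product), so both are quasiaffinities.
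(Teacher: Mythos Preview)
Your argument is correct and covers all four assertions cleanly. The surjectivity, the adjugate construction, and the quasiaffinity conclusion match the paper's proof essentially line for line; your kernel computation is in fact more detailed than the paper's, which simply asserts the kernel has the announced form.

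The one genuine difference is in multiplicativity. You invoke Sarason's semi-invariance principle: having checked that $\mathcal{N}=\theta_0 H^2\oplus\theta_1 H^2$ is $M_A$-invariant for every $A\in\A$, you conclude that the compression to $\hil=(H^2\oplus H^2)\ominus\mathcal{N}$ is an algebra homomorphism on $\A$. The paper instead verifies by hand the two block identities
\[
P_{H(\theta_0)}\,b\,P_{H(\theta_1)}a = P_{H(\theta_0)}\,ba \quad\text{on }H^2 \quad\text{when }(\theta_0/\theta_1)\mid b,
\]
\[
P_{H(\theta_1)}\,b\,P_{H(\theta_0)}a = P_{H(\theta_1)}\,ba \quad\text{on }H(\theta_0),
\]
and from these reads off $P_{\hil}(BA)|\hil=(P_{\hil}B|\hil)(P_{\hil}A|\hil)$ for $B\in\A$ and \emph{arbitrary} $A\in M_2(H^\infty)$. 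Your route is more conceptual and explains \emph{why} the compression is multiplicative; the paper's hands-on computation buys the slightly stronger one-sided statement (only the left factor need lie in $\A$), though this extra strength is never used in the paper.
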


\begin{proof}
Theorem \ref{commutantJordan} ensures that $\Psi$ maps $\A$ onto
$\{T\}'$. Moreover, it is clear that $\Psi$ is linear and that it
has the announced kernel. It remains to show that it is
multiplicative. First note that if $b\in H^\infty$ is divisible by
$\theta_0/\theta_1$, then
$$
P_{H(\theta_0)}bP_{H(\theta_1)} af=P_{H(\theta_0)}b(af-
P_{\theta_1H^2}af)=P_{H(\theta_0)}baf
$$
for every $a\in H^\infty, f\in H^2$. Moreover, since $H(\theta_1)\subset H(\theta_0)$, we have
\begin{align*}
P_{H(\theta_1)}bP_{H(\theta_0)}a|H(\theta_0)&=P_{H(\theta_1)}P_{H(\theta_0)}
b P_{H(\theta_0)}
a|H(\theta_0)\\
&=P_{H(\theta_1)}P_{H(\theta_0)}ba|H(\theta_0)\\
&=P_{H(\theta_1)}ba|H(\theta_0)
\end{align*}
for every $b,a\in H^\infty$. These considerations show that
$P_{\hil}(BA)|\hil=(P_{\hil}B|\hil)(P_{\hil}A|\hil)$ for $A\in
M_2(H^\infty)$ and $B\in \A$, so that $\Psi$ is an algebra
homomorphism.

Given $A\in \A$, let $A'$ be the (pointwise) algebraic adjoint of
$A$, so that $AA'=A'A=u \id_{\C^2}$ with $u=\det A$. Note that
$a'_{01}=-a_{01}$ so that $A'\in \A$. Set $X=\Psi(A)$ and
$X'=\Psi(A')$. We have $X'X=\Psi(A'A)=u(T)=\Psi(AA')=XX'$, that is
$$
XX'=XX'=  \left(
\begin{array}{cc}
u(S(\theta_0))& 0\\
0 & u(S(\theta_1))
\end{array}
\right).
$$
When $u \wedge \theta_0\equiv 1$, the operator $u(T)$ is a
quasiaffinity by Proposition \ref{jordanblockprops}. In particular,
this shows that $X$ and $X'$ are quasiaffinities.
\end{proof}

Let us recall a relation between operators which is weaker than that
of quasisimilarity. Given $T\in \B(\hil)$ and $T'\in \B(\hil')$, we
say that $T'$ is a \textit{quasiaffine transform} of $T$  and we
write $T\prec T'$, if there exists a quasiaffinity $X:\hil\to \hil'$
such that $XT=T'X$.

\begin{theorem}[\cite{bercOTA} Proposition 3.5.32]\label{injection}
Let $T$ and $T'$ be two operators of class $C_0$. Then, $T\prec T'$
is equivalent to $T\sim T'$.
\end{theorem}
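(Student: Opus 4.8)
The reverse implication is immediate: if $T\sim T'$ then in particular there is a quasiaffinity intertwining $T$ and $T'$, so $T\prec T'$. The plan is therefore to prove that $T\prec T'$ forces $T$ and $T'$ to have the same Jordan model; once this is known, Theorem \ref{existencejordan} furnishes a single Jordan operator $J$ with $T\sim J$ and $T'\sim J$, and since quasisimilarity of operators is an equivalence relation we conclude $T\sim T'$. Fix then a quasiaffinity $X:\hil\to\hil'$ with $XT=T'X$. Because the $H^\infty$ functional calculus is weak-star continuous (property (iii) in the definition of class $C_0$) and polynomials are weak-star dense in $H^\infty$, the intertwining relation propagates to $Xu(T)=u(T')X$ for every $u\in H^\infty$.

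First I would record the elementary fact that quasiaffine transforms cannot increase multiplicity: if $Y$ is a quasiaffinity with $YA=BY$ and $E$ is a cyclic set for $A$, then $B^nYe=YA^ne$ gives $\bigvee_n B^n(YE)\supseteq\overline{Y(\bigvee_n A^nE)}$; as $E$ is cyclic the inner space is the whole domain, and $Y$ having dense range makes the right-hand side the entire target, so $YE$ is cyclic for $B$ and $\operatorname{mult}(B)\leq\operatorname{mult}(A)$. Next I would localize this to the ranges appearing in Theorem \ref{existencejordan}. For $u\in H^\infty$ put $M=\overline{\ran u(T)}$ and $M'=\overline{\ran u(T')}$, which are invariant for $T$ and $T'$. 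Using $Xu(T)=u(T')X$ and the density of $X\hil$ one verifies $\overline{XM}=M'$, so $X|M$ is a quasiaffinity of $M$ onto $M'$ intertwining $T|M$ and $T'|M'$; thus $T|M\prec T'|M'$, and the multiplicity estimate gives $\operatorname{mult}(T'|M')\leq\operatorname{mult}(T|M)$.

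Consequently $\{u: T|\overline{\ran u(T)}\text{ has multiplicity at most }j\}$ is contained in the analogous set for $T'$, and taking greatest common inner divisors (the divisor over the larger set divides that over the smaller) the formula of Theorem \ref{existencejordan} yields $\theta_j'\mid\theta_j$ for every $j$, where $\bigoplus_j S(\theta_j)$ and $\bigoplus_j S(\theta_j')$ are the Jordan models of $T$ and $T'$. So far only one divisibility has been obtained, and this asymmetry — a single quasiaffine transform being a priori one-directional — is the heart of the matter and the main obstacle.

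To recover the reverse divisibility I would pass to adjoints: taking adjoints in $XT=T'X$ gives $T^*X^*=X^*T'^*$ with $X^*$ again a quasiaffinity, so $T'^*\prec T^*$, and both operators are again of class $C_0$. By Theorem \ref{jordanadjoint} their Jordan models are $\bigoplus_j S(\theta_j^{\sim})$ and $\bigoplus_j S((\theta_j')^{\sim})$, so running the previous argument on $T'^*\prec T^*$ produces $\theta_j^{\sim}\mid(\theta_j')^{\sim}$; since $u\mapsto u^{\sim}$ preserves divisibility, this is exactly $\theta_j\mid\theta_j'$. Combining the two estimates gives $\theta_j\equiv\theta_j'$ for all $j$, so $T$ and $T'$ share a Jordan model and we conclude as outlined. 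Everything outside the asymmetry is bookkeeping of multiplicities; it is precisely the adjoint trick, powered by Theorem \ref{jordanadjoint}, that restores the symmetry and closes the argument.
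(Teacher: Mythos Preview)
The paper does not prove this statement; it is quoted as Proposition 3.5.32 of \cite{bercOTA} and used as a background tool, so there is no proof in the paper to compare against. Your argument is correct: the intertwining $Xu(T)=u(T')X$ gives $\overline{X\,\overline{\ran u(T)}}=\overline{\ran u(T')}$, a quasiaffine transform cannot increase multiplicity, and hence the formula in Theorem \ref{existencejordan} yields $\theta_j'\mid\theta_j$; passing to adjoints and invoking Theorem \ref{jordanadjoint} then gives $\theta_j\mid\theta_j'$, so the Jordan models coincide and $T\sim T'$.
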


The next proposition can rephrased as saying that any multiplicity
two operator of class $C_0$ has the so-called property $(*)$ (see \cite{B}).

\begin{proposition}[\cite{bercOTA} Lemma 4.1.11, Proposition 4.1.13]\label{prop*}
Let $T$ be an operator of class $C_0$ with multiplicity two. Let
$X\in \{T\}'$ be a quasiaffinity. Then, there exists another
quasiaffinity $Y\in \{T\}'$ and a function $u\in H^\infty$ such that
$XY=YX=u(T)$.
\end{proposition}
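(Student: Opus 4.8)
The plan is to reduce the statement to the Jordan model $J=S(\theta_0)\oplus S(\theta_1)$ of $T$ (recall $\theta_2\equiv1$ by Corollary \ref{multiplicityn}) and then to feed the representation of Lemma \ref{invert}. By Theorem \ref{existencejordan} fix quasiaffinities $W:\hil\to\kil$ and $V:\kil\to\hil$ with $WT=JW$ and $TV=VJ$, where $\kil=H(\theta_0)\oplus H(\theta_1)$; then $VW\in\{T\}'$ is a quasiaffinity. Given the quasiaffinity $X\in\{T\}'$, the operator $\widetilde X:=WXV$ lies in $\{J\}'$ and is again a quasiaffinity. Assuming for the moment that I have produced a quasiaffinity $\widetilde Y\in\{J\}'$ and $u\in H^\infty$ with $\widetilde X\widetilde Y=\widetilde Y\widetilde X=u(J)$, I would set $Y:=V\widetilde Y W\in\{T\}'$, again a quasiaffinity, and compute $VW(XY)=V\widetilde X\widetilde Y W=Vu(J)W=u(T)VW$ together with $(YX)VW=V\widetilde Y\widetilde X W=u(T)VW$. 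Since $u(T)$ is central in $\{T\}'$ while $VW$ is injective with dense range, these two identities force $XY=YX=u(T)$. Thus it suffices to treat $T=J$.

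For $T=J$, Lemma \ref{invert} lets me write $\widetilde X=\Psi(A)$ for some $A\in\A$; taking $A'$ to be the algebraic adjoint of $A$ and $u=\det A$, the same lemma yields $\widetilde Y:=\Psi(A')$ with $\widetilde X\widetilde Y=\widetilde Y\widetilde X=u(J)$. By the last assertion of Lemma \ref{invert}, the proof is complete once I arrange that $u\wedge\theta_0\equiv1$, for then $\widetilde Y$ is a quasiaffinity. This coprimality is in any case unavoidable: any admissible $u$ makes $u(J)=\widetilde X\widetilde Y$ a quasiaffinity, whence $u\wedge\theta_0\equiv1$ by Proposition \ref{jordanblockprops}(iii) applied to $S(\theta_0)$.

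The key observation is that $A$ is not unique. By the description of $\ker\Psi$ in Lemma \ref{invert}, replacing $A$ by $A+K$ alters $\det A$ modulo $\theta_0$ exactly by $\theta_1(a_{00}b_{11}-a_{01}b_{10})$, so modulo $\theta_0$ I am free to add to $\det A$ any element of $\theta_1\bigl(a_{00}H^\infty+a_{01}H^\infty\bigr)$. I would establish $u\wedge\theta_0\equiv1$ in two stages. First, the factor $\theta_1$: since these modifications leave $\det A$ unchanged modulo $\theta_1$, the quantity $\det A\wedge\theta_1$ is an invariant of $\widetilde X$, and I would show it is trivial by compressing $\widetilde X$ to the co-invariant subspace $H(\theta_1)\oplus H(\theta_1)$, on which $J$ compresses to the uniform operator $S(\theta_1)\oplus S(\theta_1)$. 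Here the invariant subspace $\bigl(\theta_1H^2\ominus\theta_0H^2\bigr)\oplus\{0\}$ is $\widetilde X$-invariant and carries a copy of $S(\theta_0/\theta_1)$ on which $\widetilde X$ restricts to a quasiaffinity (Theorem \ref{latticeisom}), so the induced compression of $\widetilde X$ is itself a quasiaffinity, and Theorem \ref{det} then forces $\det A\wedge\theta_1\equiv1$. Second, the remaining factor $\theta_0/\theta_1$: using the freedom above together with the genericity principle of Theorem \ref{ell1}, I would choose $b_{10},b_{11}\in H^\infty$ so that $\det A+\theta_1(a_{00}b_{11}-a_{01}b_{10})$ becomes coprime to $\theta_0/\theta_1$ as well, which is possible precisely because $\widetilde X$ is a quasiaffinity.

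The main obstacle is this last coprimality step, namely showing that the determinant of the symbol of a quasiaffinity in $\{J\}'$ is relatively prime to $\theta_0$. The reduction to the Jordan model and the passage through Lemma \ref{invert} are formal, but extracting $u\wedge\theta_0\equiv1$ from the bare fact that $\widetilde X$ is injective with dense range is exactly the determinant criterion for quasiaffinities and is where the genuine content lies; I expect to need the multiplicativity of the determinant (Theorem \ref{det}), applied to the kernel and range subspaces of $u(J)$, together with the reduction to the uniform case sketched above.
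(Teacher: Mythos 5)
First, note that the paper does not prove this proposition at all: it is imported verbatim from \cite{bercOTA}, Proposition 4.1.13, so there is no in-paper argument to compare yours against; you are attempting to reprove a cited result. Your reduction to the Jordan model $J=S(\theta_0)\oplus S(\theta_1)$ is correct (the computation $VW(XY)=u(T)VW$ and $(YX)VW=u(T)VW$, together with injectivity and dense range of $VW$, does force $XY=YX=u(T)$), and passing through Lemma \ref{invert} with the algebraic adjoint $A'$ is the right framework: everything comes down to showing that the symbol $A$ of the quasiaffinity $\widetilde X=\Psi(A)$ can be modified by an element of $\ker\Psi$ so that $\det A\wedge\theta_0\equiv 1$.

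That coprimality claim is exactly where your argument has a genuine gap, and you acknowledge as much. Your ``first stage'' reduces to the assertion that a quasiaffinity in the commutant of the uniform operator $S(\theta_1)\oplus S(\theta_1)$ has symbol with determinant coprime to $\theta_1$, and you justify this by saying ``Theorem \ref{det} then forces $\det A\wedge\theta_1\equiv 1$.'' Theorem \ref{det} computes determinants of restrictions and compressions of a $C_0$ operator to invariant subspaces; it says nothing about the determinant of the $H^\infty$-matrix symbol of an element of the commutant, and no direct application of it yields the claim. The fact you need (for uniform Jordan operators, $\Psi(A)$ injective or with dense range forces $\det A\wedge\theta\equiv 1$) is itself a substantive theorem in \cite{bercOTA} (Section 4.1), essentially of the same depth as the proposition you are proving, so invoking it implicitly is circular in spirit. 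A workable repair that stays within the tools quoted in this paper: set $\delta=\det A\wedge\theta_1$ and restrict $\Psi(A)$ and $\Psi(A')$ to the hyperinvariant subspace $N=\ker\delta(J)$, on which $J|N$ has minimal function $\delta$, so $(\det A)(J)|N=0$; since $\Psi(A)|N$ is injective with dense range in $N$ (Theorem \ref{latticeisom}), this forces $\Psi(A')|N=0$, which in turn forces $\delta$ to divide both entries of the first row of $A$ and contradicts the density of $\ran\Psi(A)$ unless $\delta\equiv 1$. Your ``second stage'' is essentially sound once the first is in place: with $\det A\wedge\theta_1\equiv 1$ and with $a_{00}\wedge a_{01}\wedge\theta_0\equiv 1$ (which does follow from dense range of $\widetilde X$), Theorem \ref{ell1} applied to the three functions $\det A$, $\theta_1 a_{00}$, $\theta_1 a_{01}$ produces scalar $b_{10},b_{11}$ with $\bigl(\det A+\theta_1(a_{00}b_{11}-a_{01}b_{10})\bigr)\wedge\theta_0\equiv 1$; but you should state the coprimality hypothesis $a_{00}\wedge a_{01}\wedge\theta_0\equiv 1$ explicitly and prove it, since ``possible precisely because $\widetilde X$ is a quasiaffinity'' is not an argument. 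As written, the proposal is an outline whose central step is asserted rather than proved.
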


We obtain a useful consequence.

\begin{lemma}\label{similar}
Let $T$ be an operator of class $C_0$ with multiplicity
two. Let $M, M'$ be invariant subspaces for $T$. Assume
that there exist quasiaffinities $X,X'\in\{T\}'$ such that
$\ol{XM}=\ol{X'M'}$. Then, $M\sim M'$.
\end{lemma}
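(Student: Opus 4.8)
The plan is to prove the two relations $M\prec M'$ and $M'\prec M$ separately. Since the hypotheses are symmetric in the pairs $(M,X)$ and $(M',X')$, it suffices to establish $M\prec M'$, that is, to produce a single quasiaffinity $Z\in\{T\}'$ with $\overline{ZM}=M'$; the reverse relation then follows by interchanging the roles of the two subspaces.

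First I would invoke Proposition \ref{prop*}: applied to the quasiaffinity $X'\in\{T\}'$, it furnishes a quasiaffinity $Y'\in\{T\}'$ and a function $u\in H^\infty$ with $X'Y'=Y'X'=u(T)$. The natural candidate for the intertwiner is $Z=Y'X$, which lies in $\{T\}'$ and is injective, being a composition of injective maps. The crucial point is that injectivity alone forces $Z$ to be a quasiaffinity: since $T$ has multiplicity two it has finite multiplicity, so Theorem \ref{latticeisom}, applied with both operators equal to $T$, upgrades the injectivity of $Z$ to density of its range.

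Next I would compute $\overline{ZM}$ by applying the continuous operator $Y'$ to the given identity $\overline{XM}=\overline{X'M'}$. Using the elementary fact that $\overline{Y'\,\overline{S}}=\overline{Y'S}$ for any set $S$, this yields
$$
\overline{(Y'X)M}=\overline{Y'\,\overline{XM}}=\overline{Y'\,\overline{X'M'}}=\overline{(Y'X')M'}=\overline{u(T)M'}.
$$
It remains to check that $\overline{u(T)M'}=M'$. Since $M'$ is invariant for $T$, it is invariant for the whole $H^\infty$ functional calculus, so $u(T)M'\subset M'$ and the restriction $u(T)|M'$ coincides with $u(T|M')$. Now $u(T)=X'Y'$ is a composition of quasiaffinities, hence injective, so $u(T|M')$ is injective as well; and $T|M'$ again has finite multiplicity, so a second application of Theorem \ref{latticeisom} (to $T|M'$) shows that $u(T|M')$ has dense range in $M'$. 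Therefore $\overline{u(T)M'}=M'$, and $Z=Y'X$ is a quasiaffinity in $\{T\}'$ with $\overline{ZM}=M'$, giving $M\prec M'$. By symmetry $M'\prec M$, whence $M\sim M'$.

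The main obstacle throughout is the recurring gap between injectivity and dense range: a composition of quasiaffinities is automatically injective but need not have dense range, and likewise the restriction $u(T)|M'$ is visibly injective while its range could a priori fail to be dense in $M'$. Property $(*)$ of Proposition \ref{prop*} is precisely what allows us to trade the intractable composite $\overline{XM}$ for the manageable object $\overline{u(T)M'}$, and the finite-multiplicity hypothesis, through Theorem \ref{latticeisom}, is exactly the tool that closes both range gaps at once. I expect the verification of these two applications of Theorem \ref{latticeisom}, rather than the algebraic manipulation of closures, to be the delicate point.
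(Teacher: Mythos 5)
Your proof is correct and follows essentially the same route as the paper: Proposition \ref{prop*} produces $Y'$ with $Y'X'=u(T)$, and the quasiaffinity $Z=Y'X\in\{T\}'$ satisfies $\ol{ZM}=\ol{u(T)M'}$, which must equal $M'$. The only (harmless) deviation is in that last step: the paper shows $T|\ol{Y'XM}\sim T|M'$ via Theorem \ref{injection} and then applies Theorem \ref{latticeisom} to the inclusion $\ol{Y'XM}\subset M'$, whereas you apply Theorem \ref{latticeisom} directly to the injective operator $u(T|M')$ commuting with $T|M'$ --- equally valid, given the standard facts that $T|M'$ is of class $C_0$ with finite multiplicity and that $u(T)|M'=u(T|M')$.
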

\begin{proof}
By Proposition \ref{prop*}, there exist quasiaffinities $Y,Y'\in
\{T\}'$ and functions $u,u'\in H^\infty$ such that $XY=YX=u(T)$ and $X'Y'=Y'X'=u'(T)$. Hence, $$\ol{Y X'
M'}=\ol{Y X M}=\ol{u(T)M}\subset M$$ and
$$
\ol{Y' X M}=\ol{Y' X' M'}=\ol{u'(T)M'}\subset M'.
$$
On the other hand, by Theorem \ref{injection} we have that
$$
T|M'\sim T|\ol{X' M'}= T|\ol{XM}\sim T|\ol{Y' X M}
$$
and
$$
T|M\sim T|\ol{X M}= T|\ol{X'M'}\sim T|\ol{Y X' M'}
$$
so
that Theorem \ref{latticeisom} implies that $\ol{Y X' M'}=M$
and $\ol{Y' X M}=M'$, and we are done.
\end{proof}

Let us close this section by proving an elementary fact which
motivates our main result.

\begin{proposition}\label{jordansim}
Let $T\in \B(\hil)$ and $T'\in \B(\hil')$ be operators of class $C_0$ with finite multiplicities. Assume that $X:\hil \to \hil'$ is a quasiaffinity such that $XT=T'X$. Let $M\in \hil$ be an invariant subspace for $T$. Then, $T'|\ol{XM}\sim T|M$ and $T'_{(XM)^\perp}\sim T_{M^\perp}$.
\end{proposition}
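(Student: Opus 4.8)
The plan is to treat the two assertions separately, deriving the statement about restrictions directly from the intertwining relation and deducing the statement about compressions by passing to adjoints and keeping track of determinants.

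For the first assertion, I would first observe that $N:=\ol{XM}$ is invariant for $T'$: indeed $T'XM=XTM\subset XM$, and taking closures gives $T'N\subset N$. The restriction $X|M$ then maps $M$ into $N$, is injective since $X$ is, and has dense range in $N$ by the very definition of $N$. Moreover, for $m\in M$ one has $(X|M)(T|M)m=XTm=T'Xm=(T'|N)(X|M)m$, so $X|M$ intertwines $T|M$ and $T'|N$. Thus $X|M$ is a quasiaffinity exhibiting $T|M\prec T'|N$. Since restrictions of $C_0$ operators to invariant subspaces are again of class $C_0$, Theorem \ref{injection} upgrades this to $T|M\sim T'|N$, which is the first claim.

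For the second assertion, I would dualize. From $XT=T'X$ we obtain $X^*(T')^*=T^*X^*$, and $X^*$ is again a quasiaffinity. Using the standard identity $(T_{M^\perp})^*=T^*|M^\perp$ (and its analogue for $T'$), together with the fact that quasisimilarity is preserved under taking adjoints, I can restate the goal as $T^*|M^\perp\sim (T')^*|N^\perp$. A direct computation shows that $X^*$ carries $N^\perp$ into $M^\perp$: for $n'\in N^\perp$ and $m\in M$ we have $\langle X^*n',m\rangle=\langle n',Xm\rangle=0$ because $Xm\in N$. Hence $Y:=X^*|N^\perp$ is an injective operator from $N^\perp$ into $M^\perp$ intertwining $(T')^*|N^\perp$ and $T^*|M^\perp$.

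The crux, and the step I expect to be the main obstacle, is to show that $Y$ has dense range; once this is established, Theorem \ref{injection} again promotes the resulting quasiaffine transform to a quasisimilarity, and passing back to adjoints finishes the proof. Since injectivity of $Y$ is automatic, I would invoke Theorem \ref{latticeisom}, which yields dense range provided the two intertwined operators have equal determinants. I would assemble this determinant identity as follows: $X$ being a quasiaffinity gives $T\sim T'$ by Theorem \ref{injection}, hence $\det T=\det T'$; the first assertion gives $\det(T|M)=\det(T'|N)$; so Theorem \ref{det} forces $\det T_{M^\perp}=\det T'_{N^\perp}$. Applying the $\sim$-operation of Theorem \ref{jordanadjoint} to determinants then yields $\det(T^*|M^\perp)\equiv(\det T_{M^\perp})^{\sim}\equiv(\det T'_{N^\perp})^{\sim}\equiv\det((T')^*|N^\perp)$, which is precisely the hypothesis needed to apply Theorem \ref{latticeisom} to $Y$. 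Throughout I would use that all the restrictions and compressions appearing here are of class $C_0$ with finite multiplicity, so that this determinant machinery is available.
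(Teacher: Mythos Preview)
Your argument is correct and essentially identical to the paper's proof: both establish $T|M\sim T'|\ol{XM}$ via Theorem \ref{injection}, then pass to adjoints, note $X^*(\ol{XM})^\perp\subset M^\perp$, use the determinant identity obtained from Theorem \ref{det} to apply Theorem \ref{latticeisom} to $X^*|(\ol{XM})^\perp$, and conclude with Theorem \ref{injection} again. The only difference is that you spell out the $\sim$-operation step converting $\det T_{M^\perp}=\det T'_{N^\perp}$ into equality of the determinants of the adjoint restrictions, which the paper leaves implicit.
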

\begin{proof}
We clearly have $T\prec T'$, so by Theorem \ref{injection} we find
$T\sim T'$. Hence $\det T=\det T'$. If we let $E=\ol{XM}$, it
follows from Theorem \ref{injection} again that $T'|E\sim T|M$, and
in particular $\det(T|M)=\det(T'|E)$. Using Theorem \ref{det}, we
find
$$
\det(T_{M^\perp})=\frac{\det T}{\det (T|M)}=\frac{\det T'}{\det (T'|E)}=\det(T'_{E^\perp}).
$$
Moreover, we have $X^* E^\perp\subset M^\perp$ and $X^* T'^*=T^*
X^*$. Since $X^*$ is injective, we may apply Theorem
\ref{latticeisom} to find $\ol{X^* E^\perp}=M^\perp$. This
establishes $T'^*|E^\perp \prec T^*|M^\perp$. By Theorem
\ref{injection}, we have $T'^*|E^\perp \sim T^*|M^\perp$ and
$T'_{E^\perp} \sim T_{M^\perp}$.
\end{proof}

Proposition \ref{jordansim} shows in particular that if $T$ is an operator
of class $C_0$ with multiplicity two and $M,M'$ are invariant subspaces for $T$, then $M\sim M'$ implies $T|M\sim
T|M'$ and $T_{M^\perp}\sim T_{M'^\perp}$. Our main theorem says that
the converse holds.

\section{Jordan model and hyperinvariance}
Let us first make a convention. Let $\theta_0,\theta_1\in H^\infty$ be inner functions such that $\theta_1$ divides $\theta_0$. In the space $H(\theta_0)\oplus H(\theta_1)$, we identify the subspace $H(\theta_0)\oplus \{0\}$ with $H(\theta_0)$ and the subspace $\{0\}\oplus H(\theta_1)$ with $H(\theta_1)$. Given $M\subset H(\theta_0)\oplus H(\theta_1)$ an invariant subspace for the Jordan operator $S(\theta_0)\oplus S(\theta_1)$, it is easy to verify that $\ol{P_{H(\theta_j)}M}$ is invariant for $S(\theta_j)$ for each $j=0,1$, so by Proposition \ref{jordanblockprops} we can find $\phi_j \in H^\infty$ an inner divisor of $\theta_j$ such that $\ol{P_{H(\theta_j)} M}=\phi_j H^2\ominus \theta_j H^2$. We will use this observation implicitly throughout the remainder of the paper. The following result allows us to focus on a very special kind of subspace $M$; it is a spiritual cousin of Theorem 3.2 in \cite{BS2}.

\begin{theorem}\label{hyper}
Let $T=S(\theta_0)\oplus S(\theta_1)$ be a Jordan operator and $M$ be an invariant subspace for $T$. Then, there exists a quasiaffinity $X\in \{T\}'$ such that
$$
\ol{P_{H(\theta_0)}XM}\oplus \ol{P_{H(\theta_1)}XM}
$$
is a hyperinvariant subspace for $T$.
\end{theorem}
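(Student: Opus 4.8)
The plan is to realize the desired $X$ as a product of two \emph{determinant-one} elementary quasiaffinities, thereby avoiding any delicate control of the determinant. Throughout I parametrize $\{T\}'$ by matrices $A\in\A$ as in Lemma \ref{invert}, and I write $\overline{P_{H(\theta_j)}M}=\phi_j H^2\ominus\theta_j H^2$. By Theorem \ref{hyperinvJordan}, a ``box'' $(\mu_0 H^2\ominus\theta_0 H^2)\oplus(\mu_1 H^2\ominus\theta_1 H^2)$ is hyperinvariant exactly when $\mu_1\mid\mu_0$ and $(\theta_1/\mu_1)\mid(\theta_0/\mu_0)$. I would first introduce the smallest hyperinvariant subspace containing $M$, namely $\kil=\bigvee_{Y\in\{T\}'}YM$; it is hyperinvariant, hence of the form $B_0\oplus B_1$ with $B_j=\overline{P_{H(\theta_j)}\kil}$. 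Since $XM\subset\kil$ for every $X\in\{T\}'$, one always has $\overline{P_{H(\theta_j)}XM}\subset B_j$, so it suffices to produce a quasiaffinity $X\in\{T\}'$ with $\overline{P_{H(\theta_j)}XM}=B_j$ for $j=0,1$; the box of $XM$ is then the hyperinvariant space $\kil$.

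The two elementary moves are $U(c)=\Psi\bigl(\begin{smallmatrix}1&(\theta_0/\theta_1)c\\0&1\end{smallmatrix}\bigr)$ and $L(d)=\Psi\bigl(\begin{smallmatrix}1&0\\d&1\end{smallmatrix}\bigr)$ with $c,d\in H^\infty$. Both lie in $\A$ and have determinant $1$, so by Lemma \ref{invert} they are quasiaffinities for \emph{any} choice of $c,d$. The operator $U(c)$ fixes the second coordinate (indeed $P_{H(\theta_1)}U(c)=P_{H(\theta_1)}$ on $\hil$) while transporting second-coordinate data into the first; dually $L(d)$ fixes the first coordinate while transporting first-coordinate data into the second. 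I would set $X=L(d)U(c)$: the factor $U(c)$ is chosen to grow the first coordinate up to $B_0$ while leaving $\overline{P_{H(\theta_1)}M}$ untouched, and then $L(d)$ grows the second coordinate up to $B_1$ while, because it fixes the first coordinate exactly, preserving $\overline{P_{H(\theta_0)}U(c)M}=B_0$.

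Because $T|M$ has multiplicity at most two, write $M=\bigvee_n T^n\{x,y\}$. Then $\overline{P_{H(\theta_0)}U(c)M}$ is the $S(\theta_0)$-invariant subspace generated by $P_{H(\theta_0)}U(c)x$ and $P_{H(\theta_0)}U(c)y$, whose divisor is the meet of those two vectors with $\theta_0$. For $x=(x_0,x_1)$ each such vector has the shape $x_0+(\theta_0/\theta_1)\,c(S(\theta_1))x_1$, a fixed part plus a term linear in $c$. Choosing $c$ as a generic $\ell^1$-combination of a countable family exhausting the transportable content, Theorem \ref{ell1} (applied to each generator and intersected over the two resulting dense $G_\delta$ sets) forces these meets down to the divisor of $B_0$; the only wrinkle is that the fixed summand $x_0$ makes the family affine rather than linear, which I would absorb by dividing out the generic leading coefficient. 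The identical argument with $L(d)$ handles the second coordinate, once one knows the target is again the full $B_1$; this amounts to $\bigvee_{Y\in\{T\}'}YU(c)M=\kil$, which follows from property $(*)$ (Proposition \ref{prop*}) together with Theorems \ref{injection} and \ref{latticeisom}, since the quasiaffinity $U(c)$ has a quasi-inverse $u(T)$ with $u\wedge\theta_0\equiv1$.

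I expect the genuine difficulty to lie in this genericity step: proving that a \emph{single} elementary quasiaffinity attains the maximal coordinate projection $B_j$. Two points need care. First, one must check that the triangular determinant-one operators already suffice to realize the coordinate projections of the full hull $\kil$ — that no growth of the first coordinate beyond what the $U(c)$ achieve can come from the diagonal entries of a general element of $\{T\}'$; this is because $P_{H(\theta_0)}Yx=a_{00}(S(\theta_0))x_0+(\theta_0/\theta_1)c(S(\theta_1))x_1$, and a non-unit diagonal entry $a_{00}$ can only shrink the cyclic content generated by $x_0$. Second, one must upgrade Theorem \ref{ell1} to the affine situation dictated by the fixed summand and run it simultaneously for the two generators $x,y$ with a common coefficient sequence. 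The determinant-one design of $U(c)$ and $L(d)$ is precisely what keeps the quasiaffinity requirement cost-free, so that the entire burden of the proof falls on this projection-maximization argument.
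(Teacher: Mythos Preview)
Your approach is correct and genuinely different from the paper's. The paper works with a \emph{single} maximal vector $\xi=\xi_0\oplus\xi_1$ for $T|M$ and a single $2\times 2$ matrix $A=\bigl(\begin{smallmatrix}a_0 & a_1\theta_0/\theta_1\\ b_0 & b_1\end{smallmatrix}\bigr)$ with scalar entries $a_j,b_j$ chosen generically via Theorem~\ref{ell1} so that three conditions hold simultaneously: the first row sends $\xi$ to a vector whose $\theta_0$-divisor is $\xi_0\wedge(\theta_0/\theta_1)\xi_1\wedge\theta_0$, the second row realizes $\xi_0\wedge\xi_1\wedge\theta_1$, and $\det A\wedge\theta_0\equiv 1$. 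The paper then does \emph{not} show that the box of $XM$ equals the hull $\kil$; it only verifies directly that $\phi_0\equiv\psi_0$, $\phi_1\mid\phi_0$, and $(\theta_1/\phi_1)\mid(\theta_0/\phi_0)$, which is enough for hyperinvariance even though $\phi_1$ may be strictly larger than $\psi_1$.

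Your two-step factorization $X=L(d)U(c)$ through unit-triangular matrices trades the paper's determinant control for a longer genericity argument, and it yields more: your $X$ is actually \emph{invertible} in $\{T\}'$ (not merely a quasiaffinity), and your box coincides with the full hyperinvariant hull $\kil$. The cost is that you must run Theorem~\ref{ell1} for two generators rather than one maximal vector, handle the affine shift, and in the second step check the identity $x_0'\wedge y_0'\wedge x_1\wedge y_1\wedge\theta_1=\psi_1$ (which holds because any common inner divisor of $x_0',x_1,\theta_1$ automatically divides $x_0$, since $x_0'-x_0\in(\theta_0/\theta_1)x_1 H^\infty+\theta_0 H^2$). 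Your invocation of Proposition~\ref{prop*} for the hull of $U(c)M$ is in fact unnecessary: since $\det U(c)=1$, Lemma~\ref{invert} gives $U(c)$ a genuine inverse $U(-c)\in\{T\}'$, so the hull is preserved trivially. With these verifications filled in, your argument goes through.
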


\begin{proof}
Let $\xi \in M$ be a maximal vector for $T|M$ and write
$\xi=\xi_0\oplus \xi_1\in H(\theta_0)\oplus H(\theta_1)$. By Theorem
\ref{ell1}, we find can non-zero $a_0,a_1\in \C$ such that
$$
\left(a_0\xi_0+a_1\frac{\theta_0}{\theta_1}\xi_1\right)\wedge\theta_0\equiv \xi_0
\wedge \frac{\theta_0}{\theta_1}\xi_1 \wedge \theta_0
$$
along with non-zero $b_0,b_1\in \C$ such that
$$
(b_0 \xi_0 +b_1 \xi_1)\wedge \theta_1 \equiv \xi_0 \wedge \xi_1 \wedge \theta_1
$$
and
$$
\left(a_0 b_1-b_0 a_1\frac{\theta_0}{\theta_1} \right)\wedge \theta_0\equiv a_0\wedge
a_1 \frac{\theta_0}{\theta_1}\wedge \theta_0\equiv 1\wedge
\frac{\theta_0}{\theta_1}\wedge \theta_0\equiv 1.
$$
Set
$$
A=\left(
\begin{array}{cc}
a_0 & a_1\theta_0/\theta_1 \\
b_0 & b_1
\end{array}
\right)
$$
and
$$
X=P_{H(\theta_0)\oplus H(\theta_1)}A|(H(\theta_0)\oplus
H(\theta_1)).
$$
Now, $\det A=a_0b_1-b_0 a_1\theta_0/\theta_1$, so that $X$ is a
quasiaffinity commuting with $T$ by Lemma \ref{invert}. Notice that
$$
m_{T|M}\equiv m_{\xi}\equiv \frac{\theta_0}{\xi_0\wedge \theta_0}\vee
\frac{\theta_1}{\xi_1\wedge \theta_1}\equiv\frac{\theta_0}{\xi_0\wedge
(\theta_0/\theta_1)\xi_1\wedge \theta_0}.
$$
Therefore, if we let
$X\xi=y_0\oplus y_1\in
H(\theta_0)\oplus H(\theta_1)$
we find
$$
 y_0\wedge
\theta_0\equiv\left(a_0 \xi_0+a_1 \frac{\theta_0}{\theta_1}\xi_1\right)\wedge
\theta_0\equiv \xi_0 \wedge \frac{\theta_0}{\theta_1}\xi_1 \wedge
\theta_0\equiv\frac{\theta_0}{m_{T|M}},
$$
and
$$
 y_1\wedge \theta_1\equiv(b_0\xi_0+b_1\xi_1)\wedge \theta_1\equiv \xi_0\wedge \xi_1 \wedge \theta_1
$$
so that $y_1\wedge \theta_1$ divides $\theta_0/m_{T|M}$.

Consider now the hyperinvariant subspace generated by $M$,
$$
E=\bigvee \{YM:Y\in \{T\}' \}.
$$
By Theorem \ref{hyperinvJordan}, $E$ can be written as
$$
E=(\psi_0 H^2\ominus \theta_0 H^2) \oplus (\psi_1 H^2\ominus \theta_1 H^2)
$$
for some inner functions $\psi_0,\psi_1\in H^\infty$ with the
property that $\psi_1|\psi_0$, $(\theta_1/\psi_1)|(\theta_0/\psi_0)$ and $\psi_j|\theta_j$ for $j=0,1$.
Note that for $Y\in \{T\}'$, $u\in H^\infty$ and $h\in
H(\theta_0)\oplus H(\theta_1)$, we have
$$
u(T)Yh=Yu(T)h
$$
which shows that $m_{T|M}(T) Yh=0$ for every $h\in M$ and $Y\in
\{T\}'$, whence the minimal functions of $T|E$ and $T|M$ coincide,
and
$$
\theta_0/\psi_0\equiv m_{T|E}\equiv m_{T|M}.
$$
For each $j=0,1,$ write
$$
\ol{P_{H(\theta_j)}XM}=\phi_j H^2\ominus \theta_j H^2
$$
where $\phi_j$ is an inner divisor of $\theta_j$. Since $XM\subset E$, we have
$$
\phi_j H^2\ominus \theta_j H^2=\ol{P_{H(\theta_j)}XM}\subset \ol{P_{H(\theta_j)} E}=\psi_j H^2\ominus \theta_j H^2
$$
and thus $\psi_j$ divides $\phi_j$ for $j=0,1$. Notice that $y_0\in
P_{H(\theta_0)}XM$, so $\phi_0$ divides $y_0\wedge \theta_0$. But we
established above that $y_0\wedge
\theta_0\equiv\theta_0/m_{T|M}\equiv\psi_0$, so we find
$\phi_0\equiv\psi_0$. In addition, $y_1\in P_{H(\theta_1)}XM$
implies that $\phi_1$ divides $y_1\wedge \theta_1$, which in turn
divides $\theta_0/m_{T|M}$ as was shown above. Since
$\theta_0/m_{T|M}\equiv\psi_0\equiv\phi_0$, we have that $\phi_1$
divides $\phi_0$. Finally, using the fact that $\psi_1$ divides
$\phi_1$, we find that $\theta_1/\phi_1$ divides $\theta_1/\psi_1$,
which in turn divides $\theta_0/\psi_0\equiv\theta_0/\phi_0$.
Theorem \ref{hyperinvJordan} completes the proof.
\end{proof}

The following is based on Theorem 3.4 of \cite{BS2}. Properties
(iii) and (iv) below are part of the so-called \textit{Weyl
identities} (see \cite{BL}).

\begin{proposition}\label{jordanmodel}
Let $T=S(\theta_0)\oplus S(\theta_1)$ be a Jordan operator. Let $M$ be an invariant subspace for $T$. For each $j=0,1$, write
$$
\ol{P_{H(\theta_j)} M}=\phi_j H^2\ominus \theta_j H^2
$$
where $\phi_j$ is an inner divisor of $\theta_j$. Assume that $\phi_1|\phi_0$ and $(\theta_1/\phi_1)|(\theta_0/\phi_0)$. Let $S(\alpha_0)\oplus S(\alpha_1)$ be the Jordan model of $T|M$ and $S(\beta_0)\oplus S(\beta_1)$ be the Jordan model of $T_{M^\perp}$. Then,
\begin{enumerate}
    \item[\rm{(i)}]  $\theta_0 \theta_1=\alpha_0 \alpha_1 \beta_0 \beta_1$
    \item[\rm{(ii)}] $\phi_0\equiv\theta_0/\alpha_0$ and $\phi_1\equiv\beta_1$
    \item[\rm{(iii)}] $(\theta_1/\beta_1)|\alpha_0$
    \item[\rm{(iv)}] $\beta_1|(\theta_0/\alpha_0)$ and
    $\beta_1|(\theta_1/\alpha_1)$.
\end{enumerate}
\end{proposition}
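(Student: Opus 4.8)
The plan is to reduce everything to the hyperinvariant subspace generated by $M$, read off the top function from a minimal function and the bottom function from a determinant, and handle the two extreme Jordan functions by a restriction argument together with duality. Part (i) is immediate: $\det T=\theta_0\theta_1$, $\det(T|M)=\alpha_0\alpha_1$ and $\det(T_{M^\perp})=\beta_0\beta_1$, so it follows at once from the multiplicativity of the determinant (Theorem \ref{det}). For the rest, I would set up the central reduction. Put $\hat M=(\phi_0 H^2\ominus\theta_0 H^2)\oplus(\phi_1 H^2\ominus\theta_1 H^2)$; by the standing hypothesis on $\phi_0,\phi_1$ this is hyperinvariant (Theorem \ref{hyperinvJordan}). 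Since the coordinate projections $P_{H(\theta_0)}$ and $P_{H(\theta_1)}$ lie in $\{T\}'$, any hyperinvariant subspace containing $M$ contains $\hat M$, while $\hat M$ itself is hyperinvariant and contains $M$; hence $\hat M=E:=\bigvee\{YM:Y\in\{T\}'\}$. Using the unitary equivalences $S(\theta)|(\phi H^2\ominus\theta H^2)\cong S(\theta/\phi)$ (via $f\mapsto\phi f$) and $P_{H(\phi)}S(\theta)|H(\phi)=S(\phi)$, I would compute $T|\hat M\cong S(\theta_0/\phi_0)\oplus S(\theta_1/\phi_1)$ and $T_{\hat M^\perp}\cong S(\phi_0)\oplus S(\phi_1)$; both are Jordan operators by hypothesis, so these are their Jordan models.

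With this in hand the first half of (ii) is short. Exactly as in the proof of Theorem \ref{hyper}, the identity $u(T)Yh=Yu(T)h$ shows that $m_{T|M}(T)$ annihilates $E$, so $m_{T|M}=m_{T|E}=m_{T|\hat M}=\theta_0/\phi_0$; since $m_{T|M}=\alpha_0$ this gives $\phi_0\equiv\theta_0/\alpha_0$. For the second half, $\phi_1\equiv\beta_1$, I would first prove $\phi_1|\beta_1$. Because $\hat M$ is invariant for $T$, the subspace $\hat M^\perp\subset M^\perp$ is invariant for $(T_{M^\perp})^*=T^*|M^\perp$, and $(T_{M^\perp})^*|\hat M^\perp=(T_{\hat M^\perp})^*$ has Jordan model $S(\phi_0^\sim)\oplus S(\phi_1^\sim)$ (Theorem \ref{jordanadjoint}). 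The coordinatewise divisibility of the Jordan functions of a restriction, which follows directly from the formula in Theorem \ref{existencejordan}, then yields $\phi_0^\sim|\beta_0^\sim$ and $\phi_1^\sim|\beta_1^\sim$, hence $\phi_1|\beta_1$.

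The reverse divisibility $\beta_1|\phi_1$ is the crux and where I expect the real work to lie. Here I would pass to the adjoint: $T^*\cong S(\theta_0^\sim)\oplus S(\theta_1^\sim)$ is again a Jordan operator and $M^\perp$ is invariant for it, so applying the already-established top identity $\phi_0\equiv\theta_0/\alpha_0$ to the pair $(T^*,M^\perp)$ should produce $\beta_0\equiv\mu_0$, where $M\cap H(\theta_0)=\mu_0 H^2\ominus\theta_0 H^2$ (one checks that $\ol{P_{H(\theta_0)}M^\perp}=H(\theta_0)\ominus(M\cap H(\theta_0))$ corresponds, under the conjugation unitary identifying $S(\theta_0)^*$ with $S(\theta_0^\sim)$, to the first coordinate projection for $T^*$). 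Combined with (i) this determines $\beta_1\equiv\det(T_{M^\perp})/\beta_0$, and matching the result against $\phi_1$ reduces the claim to an inner-function identity relating $\mu_0$, $\alpha_1$ and the projections. I anticipate two genuine difficulties: (a) verifying that the hyperinvariance hypothesis transfers to $(T^*,M^\perp)$ so that the top identity is applicable, and (b) the greatest-common-divisor bookkeeping in this last step, which I would carry out by choosing an explicit maximal vector $\xi=\xi_0\oplus\xi_1$ and manipulating quantities such as $\xi_0\wedge\theta_0$ and $\xi_1\wedge\theta_1$, in the spirit of the computation in Theorem \ref{hyper}.

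Finally, (iii) and (iv) follow formally once (ii) is available. Writing $\beta_1\equiv\phi_1$ and $\theta_0/\alpha_0\equiv\phi_0$, statement (iii) becomes $(\theta_1/\phi_1)|(\theta_0/\phi_0)$ and the first part of (iv) becomes $\phi_1|\phi_0$, both of which are exactly the standing hypotheses on $\phi_0,\phi_1$. The second part of (iv), namely $\beta_1|(\theta_1/\alpha_1)$, amounts to $\alpha_1\phi_1|\theta_1$; I would obtain this by applying Theorem \ref{det} to $M$ as an invariant subspace of $T|\hat M$, which gives $\alpha_0\alpha_1|(\theta_0/\phi_0)(\theta_1/\phi_1)$, and then cancelling $\alpha_0\equiv\theta_0/\phi_0$.
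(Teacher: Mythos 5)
Your treatment of (i), of $\phi_0\equiv\theta_0/\alpha_0$, of (iii), and of both halves of (iv) is essentially sound, and in places takes a genuinely different route from the paper: the paper computes $\alpha_0$ directly as the least common inner multiple of the minimal functions of $S(\theta_j)|\ol{P_{H(\theta_j)}M}$ rather than through the hyperinvariant subspace $E$, and obtains $\alpha_1|(\theta_1/\phi_1)$ from a multiplicity-one argument applied to $\ol{\ran(\theta_1/\phi_1)(T|M)}$ rather than from your determinant estimate inside $\hat M$; both of your substitutes work. Your derivation of $\phi_1|\beta_1$ by compressing to $\hat M^\perp=H(\phi_0)\oplus H(\phi_1)$ and invoking coordinatewise divisibility of Jordan models under restriction is also correct, though that divisibility requires the fact that restriction does not increase multiplicity for $C_0$ operators, which is more than a ``direct'' consequence of the formula in Theorem \ref{existencejordan}.

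The genuine gap is the reverse divisibility $\beta_1|\phi_1$, which you yourself call the crux and then only sketch. Two things are missing. First, to apply the top identity to the pair $(T^*,M^\perp)$ you must verify the standing divisibility hypotheses for that pair; after unwinding the conjugation these amount to $\rho|\sigma$ and $(\theta_1/\rho)|(\theta_0/\sigma)$, where $M\cap(H(\theta_0)\oplus 0)=(\sigma H^2\ominus\theta_0H^2)\oplus 0$ and $M\cap(0\oplus H(\theta_1))=0\oplus(\rho H^2\ominus\theta_1H^2)$. These are not free: in the paper they fall out of the determinant computations $\alpha_0\alpha_1\equiv\theta_0\theta_1/(\phi_0\rho)\equiv\theta_0\theta_1/(\sigma\phi_1)$, obtained from the decompositions $M=\ker Y\oplus(M\ominus\ker Y)$ and $M=\ker Z\oplus(M\ominus\ker Z)$ with $Y=P_{H(\theta_0)}|M$ and $Z=P_{H(\theta_1)}|M$, which yield $\alpha_1\equiv\theta_1/\rho$ and $\sigma\equiv\rho\phi_0/\phi_1$ --- precisely the identities your plan presupposes and never establishes. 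Second, even granting $\beta_0\equiv\sigma$, your final ``matching against $\phi_1$'' requires these same expressions for $\alpha_1$ and $\sigma$. The paper closes the argument without duality: it identifies $\beta_0\equiv\sigma$ by characterizing $\beta_0$ as the greatest common inner divisor of the functions $u$ with $u(T)\hil\subset M$ (one inclusion from $u(T)(P_{H(\theta_0)}1\oplus 0)\in\ker Z$, the other from $\sigma(T)\hil\subset\ker Z\vee\ker Y\subset M$ using $\rho|\sigma$), and then concludes $\beta_1\equiv\theta_0\theta_1/(\alpha_0\alpha_1\beta_0)\equiv\phi_0\rho/\sigma\equiv\phi_1$. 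As written, your proposal proves only $\phi_1|\beta_1$, not all of (ii).
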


\begin{proof}
Using the decomposition $\hil=M\oplus M^\perp$ to compute the determinant, we find by Theorem \ref{det} that
$$
\theta_0\theta_1=\det T=\det (T|M)\det (T_{M^\perp})=\alpha_0 \alpha_1 \beta_0\beta_1
$$
which is (i). Assume for the moment that (ii) holds. Then, (iii) is equivalent to our assumption $(\theta_1/\phi_1)|(\theta_0/\phi_0)$. Moreover, $\phi_1|\phi_0$ is equivalent to the first part of (iv). Note now that if $h=h_0\oplus h_1\in M$, then by choice of $\phi_1$ we have $(\theta_1/\phi_1)(T)h\in H(\theta_0)\oplus \{0\}$, so in fact the operator
$$
(T|M)|\ol{\ran (\theta_1/\phi_1)(T|M)}
$$
has multiplicity at most $1$. By Theorem \ref{existencejordan}, we conclude that $\alpha_1|(\theta_1/\phi_1)$ so the second half of (iv) also follows from (ii). Hence, it only remains to identify $\phi_0$ and $\phi_1$, that is to show (ii).

Since $T|M$ has multiplicity two, we can find vectors $\xi=\xi_0
\oplus \xi_1\in M$ and $\eta=\eta_0\oplus \eta_1 \in M$ with
$\xi_j,\eta_j\in H(\theta_j)$ for $j=0,1$,  such that
$$
M=\bigvee_{n=0}^\infty \left\{T^n \xi, T^n \eta \right\}.
$$
It is easy to see that $\phi_j\equiv\xi_j\wedge \eta_j \wedge
\theta_j$  and
$$
\ol{P_{H(\theta_j)} M}=\bigvee_{n=0}^\infty \left\{S(\theta_j)^n
\xi_j, S(\theta_j)^n \eta_j \right\}
$$
for $j=0,1$. Now, we have
$$
S(\theta_j)|\ol{P_{H(\theta_j)} M}\sim S(\theta_j/\phi_j)
$$
so that $\theta_j/\phi_j$ is the minimal function of
$S(\theta_j)|\ol{P_{H(\theta_j)} M}$. Since for every $u\in
H^\infty$ we have
$$
u(T)=u(S(\theta_0))\oplus u(S(\theta_1)),
$$
it is clear that the minimal function of $T|M$, namely $\alpha_0$, is equal to the least common inner multiple of the minimal functions of  $S(\theta_j)|\ol{P_{H(\theta_j)} M}$ for $j=0,1$, which is $\theta_0/\phi_0$. Hence, $\alpha_0\equiv\theta_0/\phi_0$.

Consider the operators $Y=P_{H(\theta_0)}|M$ and
$Z=P_{H(\theta_1)}|M$. It is straightforward to verify that
$$
YT_{M\ominus \ker Y}=(S(\theta_0)|\ol{YM})(Y|(M\ominus \ker Y))
$$
and
$$
ZT_{M\ominus \ker Z}=(S(\theta_1)|\ol{ZM}))(Z|(M\ominus \ker Z)).
$$
Thus, by virtue of Theorem \ref{injection}, we get
$$
T_{M\ominus \ker Y}\sim S(\theta_0)|\ol{P_{H(\theta_0)} M}\sim S(\theta_0/\phi_0)
$$
and\
$$
T_{M\ominus \ker Z}\sim S(\theta_1)|\ol{P_{H(\theta_1)} M}\sim S(\theta_1/\phi_1).
$$
Notice now that
$$
\ker Y=M\cap (0\oplus H(\theta_1))=0\oplus (\rho H^2\ominus \theta_1H^2)
$$
and
$$
\ker Z=M\cap (H(\theta_0)\oplus 0)=(\sigma H^2\ominus \theta_0 H^2)\oplus 0
$$
for some inner functions $\rho,\sigma \in H^\infty$ such that $\rho$
divides $\theta_1$ and $\sigma$ divides $\theta_0$. We now calculate
the determinant using the decomposition $M=\ker Y\oplus (M\ominus
\ker Y)$, namely
$$
\alpha_0 \alpha_1=\det (T|M)=\det (T|\ker Y)\det (T_{M\ominus \ker
Y})=\frac{\theta_0 \theta_1}{ \phi_0 \rho}.
$$
Using the fact that $\alpha_0\equiv\theta_0/\phi_0$, we find
$\alpha_1\equiv\theta_1/\rho$. On the other hand, writing $M=\ker
Z\oplus (M\ominus \ker Z)$ and computing determinants, we find
$$
\alpha_0 \alpha_1=\det (T|M)=\det (T|\ker Z)\det (T_{M\ominus \ker Z})=\frac{\theta_0 \theta_1}{\sigma \phi_1}.
$$
whence
$$\sigma=\frac{\theta_0 \theta_1}{\alpha_0 \alpha_1 \phi_1}.$$
Using once again that $\alpha_0\equiv\theta_0/\phi_0$, we get
$$\sigma\equiv\rho \frac{\phi_0 }{\phi_1}.$$
Note that by assumption $\phi_1$ divides $\phi_0$, so that $\rho$
divides $\sigma$. Now, $\beta_0$ is the minimal function of
$T_{M^\perp}$, and thus is the greatest common inner divisor of the
functions $u\in H^\infty$ such that $u(T)(H(\theta_0)\oplus
H(\theta_1))\subset M$. Given such a function $u$, we have
$$
u(T)(P_{H(\theta_0)}1\oplus 0)=P_{H(\theta_0)} u\oplus 0\in M\cap (H(\theta_0)\oplus 0)=\ker Z
$$
and thus $\sigma$ divides $u$. Conversely, using that $\rho$ divides $\sigma$, we have
\begin{align*}
\sigma(T)(H(\theta_0)\oplus H(\theta_1)) &= \ran \sigma(S(\theta_0))\oplus \ran \sigma (S(\theta_1)\\
&\subset\ran \sigma(S(\theta_0))\oplus \ran \rho(S(\theta_1))\\
&=(\sigma H^2\ominus \theta_0 H^2)\oplus (\rho H^2\ominus \theta_1 H^2)\\
&=\ker Z\vee \ker Y\subset M.
\end{align*}
Hence, $\beta_0\equiv\sigma$. Finally, using the decomposition $\hil=M\oplus M^\perp$ to compute the determinant, we find
$$
\theta_0\theta_1=\det T=\det (T|M)\det (T_{M^\perp})=\alpha_0 \alpha_1 \beta_0\beta_1
$$
and thus
$$
\beta_1\equiv\frac{\theta_0 \theta_1 }{\alpha_0 \alpha_1 \beta_0}\equiv\frac{\phi_0 \rho}{\sigma}\equiv\phi_1.
$$
\end{proof}

We close this section by providing a type of converse to Proposition \ref{jordanmodel}. Let us first establish an elementary lemma.

\begin{lemma}\label{cycliccontained}
Let $T=S(\theta_0)\oplus S(\theta_1)$ be a Jordan operator. Assume that $\phi_0,\phi_1\in H^\infty$ are inner divisors of $\theta_0$ and $\theta_1$ respectively, with the additional property that either $(\theta_1/\phi_1)|(\theta_0/\phi_0)$ or $(\theta_0/\phi_0)|(\theta_1/\phi_1)$. Set
$$
\xi=(P_{H(\theta_0)}\phi_0)\oplus (P_{H(\theta_1)}\phi_1)\in H(\theta_0)\oplus H(\theta_1).
$$
Then
$$
\bigvee_{n=0}^\infty T^n \xi\subset \left\{(P_{H(\theta_0)}\phi_0 g)\oplus (P_{H(\theta_1)}\phi_1 g): g\in H^2 \right\}.
$$
\end{lemma}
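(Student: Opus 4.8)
The plan is to realize the set on the right-hand side as the range of a single bounded operator, and then to show that this range is closed. Define $\Lambda\colon H^2\to H(\theta_0)\oplus H(\theta_1)$ by $\Lambda g=(P_{H(\theta_0)}\phi_0 g)\oplus (P_{H(\theta_1)}\phi_1 g)$; this is bounded, and its range is exactly the set appearing on the right-hand side of the claim. A routine computation with the compressed shift (using that $S(\theta_j)^n P_{H(\theta_j)}h=P_{H(\theta_j)}(z^n h)$ for $h\in H^2$) shows that $T\Lambda=\Lambda S$, where $S$ denotes the shift on $H^2$. Since $\xi=\Lambda 1$, we obtain $T^n\xi=\Lambda(z^n)$ for every $n\geq 0$, so every vector in the linear span of the orbit $\{T^n\xi\}$ is of the form $\Lambda(p)$ for a polynomial $p$ and hence lies in $\ran\Lambda$. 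As $\ran\Lambda$ is a linear subspace, the entire content of the lemma reduces to showing that $\ran\Lambda$ is \emph{closed}; once that is known, the closed span $\bigvee_{n=0}^\infty T^n\xi$ is forced to sit inside $\ran\Lambda$.

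First I would compute the kernel. One has $\Lambda g=0$ if and only if $\phi_0 g\in\theta_0 H^2$ and $\phi_1 g\in\theta_1 H^2$, that is, if and only if $(\theta_0/\phi_0)\mid g$ and $(\theta_1/\phi_1)\mid g$; hence $\ker\Lambda=\big((\theta_0/\phi_0)\vee(\theta_1/\phi_1)\big)H^2$. This is exactly where the hypothesis enters: by assumption one of $\theta_0/\phi_0$ and $\theta_1/\phi_1$ divides the other, so their least common inner multiple is the larger of the two. Say $(\theta_1/\phi_1)\mid(\theta_0/\phi_0)$ and set $\kappa=\theta_0/\phi_0$ (the other case being symmetric, with the roles of the two coordinates interchanged). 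Then $\ker\Lambda=\kappa H^2$ and $(\ker\Lambda)^\perp=H(\kappa)$.

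The key observation is that on $H(\kappa)=H(\theta_0/\phi_0)$ the first coordinate of $\Lambda$ is isometric. Indeed, for $g\in H(\theta_0/\phi_0)$ we have $\phi_0 g\in \phi_0 H^2\ominus\theta_0 H^2\subset H(\theta_0)$, so $P_{H(\theta_0)}\phi_0 g=\phi_0 g$ and therefore $\|\Lambda g\|\geq\|\phi_0 g\|=\|g\|$, using that $\phi_0$ is inner. Thus $\Lambda$ is bounded below on $(\ker\Lambda)^\perp$, and since $\ran\Lambda=\Lambda\big((\ker\Lambda)^\perp\big)$, its range is closed. This completes the argument. The one genuine obstacle is precisely the closedness of $\ran\Lambda$: the inclusion of the orbit into the set is a formal consequence of the intertwining $T\Lambda=\Lambda S$, and it is the divisibility hypothesis — by way of the computation of $\ker\Lambda$ — that makes one coordinate of $\Lambda$ bounded below and so forces the range to be closed.
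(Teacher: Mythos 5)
Your argument is correct. The intertwining $T\Lambda=\Lambda S$ is a routine verification, the identity $\ker\Lambda=\big((\theta_0/\phi_0)\vee(\theta_1/\phi_1)\big)H^2$ is right, and under the divisibility hypothesis $(\ker\Lambda)^\perp=H(\kappa)$ with $\kappa$ the larger of the two quotients; on that space the corresponding coordinate of $\Lambda$ is indeed isometric because $\phi_0\cdot H(\theta_0/\phi_0)=\phi_0H^2\ominus\theta_0H^2\subset H(\theta_0)$, so $\Lambda$ is bounded below on $(\ker\Lambda)^\perp$ and its range is closed. This is a genuinely different organization from the paper's proof. The paper argues element by element: it takes a limit $y=\lim\Lambda(r_n)$ along polynomials, invokes the closedness of $\ran\phi_j(S(\theta_j))$ separately in each coordinate to write $y_j=\phi_jP_{H(\theta_j/\phi_j)}g_j$, and then uses the divisibility hypothesis to reconcile the two witnesses $g_0$ and $g_1$ into a single $g$ by comparing their projections onto the smaller space $H(\theta_1/\phi_1)$ (or $H(\theta_0/\phi_0)$). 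You instead package the target set as $\ran\Lambda$ for a single operator intertwining the unilateral shift with $T$ and prove closedness once and for all via a lower bound; the divisibility hypothesis enters not as a patching device but through the computation of $\ker\Lambda$, which is what makes one coordinate isometric on the complement. Your version is arguably cleaner in that it isolates the one analytic fact needed (closed range) and avoids the a posteriori matching of the two coordinates, while the paper's version stays closer to the coordinate-wise machinery of Proposition \ref{jordanblockprops} that is used elsewhere in the text. Both uses of the hypothesis are essential and neither argument is more general than the other.
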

\begin{proof}
Let $y=y_0\oplus y_1\in \bigvee_{n=0}^\infty T^n \xi$. We can find a
sequence of polynomials $\{r_n\}_n$ with the property that for each
$j=0,1,$ we have
$$
y_j=\lim_{n\to \infty} r_n(S(\theta_j))(P_{H(\theta_j)}\phi_j).
$$
Notice now that for each $n\geq 0$, we have
$$
r_n(S(\theta_j))(P_{H(\theta_j)}\phi_j)=P_{H(\theta_j)}r_n
\phi_j=\phi_j
P_{H(\theta_j/\phi_j)}r_n=\phi_j(S(\theta_j))(P_{H(\theta_j)}r_n)
\in \ran \phi_j( S(\theta_j)).
$$
Proposition \ref{jordanblockprops} implies that the range of
$\phi_j(S(\theta_j))$ is closed, so there exist $g_j\in H(\theta_j)$
with the property that $y_j=\phi_j(S(\theta_j))g_j=\phi_j
P_{H(\theta_j/\phi_j)}g_j$. We have for $j=0,1$ that
$$
\phi_j P_{H(\theta_j/\phi_j)}g_j=\lim_{n\to \infty }\phi_j
P_{H(\theta_j/\phi_j)}r_n
$$
and since $1/\phi_j=\ol{\phi_j}$  we find that
\begin{equation}\label{relgj}
P_{H(\theta_j/\phi_j)}g_j=\lim_{n\to \infty
}P_{H(\theta_j/\phi_j)}r_n.
\end{equation}
If $(\theta_1/\phi_1)|(\theta_0/\phi_0)$, we have $H(\theta_1/\phi_1)\subset H(\theta_0/\phi_0)$ and (\ref{relgj}) implies that $P_{H(\theta_1/\phi_1)}g_0=P_{H(\theta_1/\phi_1)}g_1$. Hence
$$
y_1=\phi_1 P_{H(\theta_1/\phi_1)}g_1=\phi_1 P_{H(\theta_1/\phi_1)}g_0=P_{H(\theta_1)}\phi_1 g_0
$$
so that
$$
y_0\oplus y_1\in \left\{(P_{H(\theta_0)}\phi_0 g)\oplus (P_{H(\theta_1)}\phi_1 g): g\in H^2 \right\}.
$$
If, on the other hand, $(\theta_0/\phi_0)|(\theta_1/\phi_1)$, we have $H(\theta_0/\phi_0)\subset H(\theta_1/\phi_1)$ and (\ref{relgj}) implies that $P_{H(\theta_0/\phi_0)}g_1=P_{H(\theta_0/\phi_0)}g_0$. Hence
$$
y_0=\phi_0 P_{H(\theta_0/\phi_0)}g_0=\phi_0 P_{H(\theta_0/\phi_0)}g_1=P_{H(\theta_0)}\phi_0 g_1
$$
so that
$$
y_0\oplus y_1\in \left\{(P_{H(\theta_0)}\phi_0 g)\oplus (P_{H(\theta_1)}\phi_1 g): g\in H^2 \right\}.
$$
\end{proof}

\begin{proposition}\label{canonicalspace}
Let $\theta_j,\alpha_j,\beta_j\in H^\infty$ be inner functions such that $\alpha_j|\theta_j$ and $\beta_j|\theta_j$ for $j=0,1$. Assume that
\begin{enumerate}
    \item[\rm{(i)}] $\theta_1|\theta_0$, $\beta_1|\beta_0$ and $\alpha_1|\alpha_0$
    \item[\rm{(ii)}] $(\theta_1/\beta_1)|\alpha_0$
    \item[\rm{(iii)}] $\beta_1|((\theta_0/\alpha_0)\wedge (\theta_1/\alpha_1))$
    \item[\rm{(iv)}]  $\theta_0 \theta_1=\alpha_0 \alpha_1 \beta_0 \beta_1$.
\end{enumerate}
Let $T=S(\theta_0)\oplus S(\theta_1)$. If we set
$$
\xi=(P_{H(\theta_0)}(\theta_0/\alpha_0))\oplus (P_{H(\theta_1)}\beta_1)
$$
and
$$
\eta=0\oplus (P_{H(\theta_1)}(\theta_1/\alpha_1)),
$$
then
$$
N=\bigvee_{n=0}^\infty \{T^n \xi, T^n \eta\}
$$
is an invariant subspace for $T$ with the property that
$$
\ol{P_{H(\theta_0)}N}=(\theta_0/\alpha_0) H^2\ominus \theta_0 H^2,
$$
$$\ol{P_{H(\theta_1)}N}=\beta_1 H^2\ominus \theta_1 H^2,
$$
$$T|N\sim S(\alpha_0)\oplus S(\alpha_1)$$
and
$$
T_{N^\perp}\sim S(\beta_0)\oplus S(\beta_1).
$$
\end{proposition}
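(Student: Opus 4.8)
The plan is to verify the four conclusions in turn, reducing the two statements about Jordan models to a single statement about $N\cap(\{0\}\oplus H(\theta_1))$. First I would record the two projections. Writing $\xi=\xi_0\oplus\xi_1$ and $\eta=0\oplus\eta_1$, the space $\ol{P_{H(\theta_0)}N}$ is the cyclic subspace of $S(\theta_0)$ generated by $\xi_0=P_{H(\theta_0)}(\theta_0/\alpha_0)$, which is $(\theta_0/\alpha_0)H^2\ominus\theta_0 H^2$ because $(\theta_0/\alpha_0)\wedge\theta_0\equiv\theta_0/\alpha_0$; this gives the first conclusion. Likewise $\ol{P_{H(\theta_1)}N}$ is generated by $\xi_1=P_{H(\theta_1)}\beta_1$ and $\eta_1=P_{H(\theta_1)}(\theta_1/\alpha_1)$, hence equals $(\beta_1\wedge(\theta_1/\alpha_1))H^2\ominus\theta_1 H^2=\beta_1 H^2\ominus\theta_1 H^2$, where the last equality uses hypothesis (iii), namely $\beta_1|(\theta_1/\alpha_1)$. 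This is the second conclusion, and it identifies $\phi_0\equiv\theta_0/\alpha_0$ and $\phi_1\equiv\beta_1$ in the standing convention.

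With these in hand, hypotheses (ii) and (iii) say precisely that $\phi_1|\phi_0$ and $(\theta_1/\phi_1)|(\theta_0/\phi_0)$, so Proposition \ref{jordanmodel} applies to $N$. Writing the Jordan models of $T|N$ and $T_{N^\perp}$ as $S(\gamma_0)\oplus S(\gamma_1)$ and $S(\delta_0)\oplus S(\delta_1)$, part (ii) of that proposition gives $\phi_0\equiv\theta_0/\gamma_0$ and $\phi_1\equiv\delta_1$, that is $\gamma_0\equiv\alpha_0$ and $\delta_1\equiv\beta_1$ at once. Combining the determinant identity $\theta_0\theta_1\equiv\gamma_0\gamma_1\delta_0\delta_1$ with hypothesis (iv) then reduces everything to the single relation $\gamma_1\equiv\alpha_1$ (whence $\delta_0\equiv\beta_0$ follows).

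To pin down $\gamma_1$ I would analyze $\ker Y$ for $Y=P_{H(\theta_0)}|N$, since, exactly as in the proof of Proposition \ref{jordanmodel}, one has $\ker Y=N\cap(\{0\}\oplus H(\theta_1))=\{0\}\oplus(\rho H^2\ominus\theta_1 H^2)$ with $\gamma_1\equiv\theta_1/\rho$. The cyclic subspace $V_\eta=\bigvee_{n}T^n\eta=\{0\}\oplus((\theta_1/\alpha_1)H^2\ominus\theta_1 H^2)$ is contained in $\ker Y$, giving $\alpha_1|\gamma_1$ immediately; the content, and the main obstacle, is the reverse inclusion $\ker Y\subseteq V_\eta$. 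I would prove it by a limiting argument. Any element of $N$ is a limit of vectors $R(p)+(0\oplus v)$, where $p$ is a polynomial, $R(p)=P_{H(\theta_0)}((\theta_0/\alpha_0)p)\oplus P_{H(\theta_1)}(\beta_1 p)$, and $v\in(\theta_1/\alpha_1)H^2\ominus\theta_1 H^2$. If such a sequence tends to $0\oplus w$, then $P_{H(\theta_0)}((\theta_0/\alpha_0)p_k)\to 0$. Using that multiplication by an inner function is isometric, one checks $\|P_{H(\theta_0)}((\theta_0/\alpha_0)p_k)\|=\|P_{H(\alpha_0)}p_k\|$ and $\|P_{H(\theta_1/\alpha_1)}(\beta_1 p_k)\|=\|P_{H(c)}p_k\|$, where $c\equiv\theta_1/(\alpha_1\beta_1)$ is inner by (iii). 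The crucial divisibility $c|\alpha_0$ holds because $c|\alpha_0$ is equivalent to $\theta_1|\alpha_0\alpha_1\beta_1$, and by (iv) $\alpha_0\alpha_1\beta_1\equiv\theta_0\theta_1/\beta_0$, so this reduces to $\beta_0|\theta_0$, which is assumed. Hence $H(c)\subseteq H(\alpha_0)$, so $\|P_{H(c)}p_k\|\le\|P_{H(\alpha_0)}p_k\|\to 0$ and $P_{H(\theta_1/\alpha_1)}(\beta_1 p_k)\to 0$; since each $v\perp H(\theta_1/\alpha_1)$, projecting the limit gives $P_{H(\theta_1/\alpha_1)}w=0$, i.e. $w\in(\theta_1/\alpha_1)H^2\ominus\theta_1 H^2$. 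This establishes $\ker Y=V_\eta$.

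Finally I would assemble the Jordan models. From $\ker Y=V_\eta$ we get $T|\ker Y\sim S(\alpha_1)$, while $Y$ induces a quasiaffinity (as in the proof of Proposition \ref{jordanmodel}) showing $T_{N\ominus\ker Y}\sim S(\theta_0)|\ol{P_{H(\theta_0)}N}\sim S(\alpha_0)$. Applying Theorem \ref{det} to $T|N$ along $N=\ker Y\oplus(N\ominus\ker Y)$ gives $\gamma_0\gamma_1\equiv\det(T|N)\equiv\alpha_0\alpha_1$, and since $\gamma_0\equiv\alpha_0$ this forces $\gamma_1\equiv\alpha_1$, so $T|N\sim S(\alpha_0)\oplus S(\alpha_1)$. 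The determinant relation of the second paragraph then yields $\delta_0\equiv\beta_0$, whence $T_{N^\perp}\sim S(\beta_0)\oplus S(\beta_1)$, completing the proof.
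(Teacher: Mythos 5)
Your proof is correct, but the technical core is handled by a genuinely different device than the one in the paper. The paper never looks at $\ker\bigl(P_{H(\theta_0)}|N\bigr)$: it uses Lemma \ref{cycliccontained} to trap each cyclic subspace $K_\xi$, $K_\eta$ inside a ``graph'' set $\{(P_{H(\theta_0)}\phi_0 g)\oplus(P_{H(\theta_1)}\phi_1 g):g\in H^2\}$, shows the two graph sets meet only in $0$, and concludes $T|N\sim T|K_\xi\oplus T|K_\eta\sim S(\alpha_0)\oplus S(\alpha_1)$ directly from the algebraic direct sum; the compression model then comes from Proposition \ref{jordanmodel} and one determinant computation, exactly as in your second paragraph. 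You instead identify $N\cap(\{0\}\oplus H(\theta_1))$ exactly as $V_\eta$ by a quantitative limiting argument --- the chain $\|P_{H(\theta_1/\alpha_1)}(\beta_1 p_k)\|=\|P_{H(c)}p_k\|\le\|P_{H(\alpha_0)}p_k\|=\|P_{H(\theta_0)}((\theta_0/\alpha_0)p_k)\|\to0$ --- and then recover $\gamma_1\equiv\alpha_1$ from the orthogonal decomposition $N=\ker Y\oplus(N\ominus\ker Y)$ and Theorem \ref{det}. I checked the details: the identity $P_{H(\theta)}(uf)=uP_{H(\theta/u)}f$ for inner $u|\theta$ justifies both norm equalities, $c=\theta_1/(\alpha_1\beta_1)$ is inner by (iii), and $c|\alpha_0$ does follow from (iv) together with $\beta_0|\theta_0$ (it is in fact weaker than the paper's hypothesis (ii), which gives $(\theta_1/\beta_1)|\alpha_0$). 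Both arguments ultimately rest on the same divisibility phenomenon --- that vanishing of the first component forces vanishing of the relevant part of the second --- but yours trades the clean set-theoretic containments of Lemma \ref{cycliccontained} for a norm estimate along approximating sequences, which lets you bypass that lemma entirely; the paper's version buys the trivial intersection $K_\xi\cap K_\eta=\{0\}$, which gives the Jordan model of $T|N$ without any determinant bookkeeping for $\gamma_1$.
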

\begin{proof}
Let $\phi_0=\theta_0/\alpha_0$, $\phi_1=\beta_1$, $\psi_0=\theta_0$
and $\psi_1=\theta_1/\alpha_1$. We have $\eta=(P_{H(\theta_0)}\psi_0
)\oplus (P_{H(\theta_1)}\psi_1)=0\oplus (P_{H(\theta_1)}\psi_1)$ and
$\xi=(P_{H(\theta_0)}\phi_0)\oplus (P_{H(\theta_1)}\phi_1)$. It is
manifest in view of (ii) that $(\theta_1/\phi_1)|(\theta_0/\phi_0)$
and we  have trivially that $(\theta_0/\psi_0)|(\theta_1/\psi_1)$.
By Lemma \ref{cycliccontained}, we have that
$$
\bigvee_{n=0}^\infty T^n \xi\subset \left\{(P_{H(\theta_0)}\phi_0
g)\oplus (P_{H(\theta_1)}\phi_1 g): g\in H^2 \right\}.
$$
and
$$
\bigvee_{n=0}^\infty T^n \eta\subset \left\{(P_{H(\theta_0)}\psi_0
g)\oplus (P_{H(\theta_1)}\psi_1 g): g\in H^2 \right\}=\left\{0\oplus
(P_{H(\theta_1)}\psi_1 g): g\in H^2 \right\}.
$$
We want to show that the sets appearing on the right-hand sides
intersect trivially. Suppose then that $P_{H(\theta_0)}\phi_0 g=0$
and $P_{H(\theta_1)}\phi_1 g=P_{H(\theta_1)}\psi_1 h$ for some
$g,h\in H^2$. The first relation implies that $g\in
(\theta_0/\phi_0)H^2$. Since $(\theta_1/\phi_1)|(\theta_0/\phi_0)$,
we have that $\theta_1|(\phi_1 \theta_0/\phi_0)$ and thus
$$
P_{H(\theta_1)}\psi_1 h=P_{H(\theta_1)}\phi_1 g=0.
$$
We have therefore established that
$$
\left\{(P_{H(\theta_0)}\phi_0 g)\oplus (P_{H(\theta_1)}\phi_1 g): g\in H^2 \right\}\cap \left\{(P_{H(\theta_0)}\psi_0 g)\oplus (P_{H(\theta_1)}\psi_1 g): g\in H^2 \right\}=\{0\}
$$
whence $K_{\xi}\cap K_{\eta}=\{0\}$, where
$K_{\xi}=\bigvee_{n=0}^\infty T^n \xi$ and
$K_{\eta}=\bigvee_{n=0}^\infty T^n \eta$.

It is straightforward to verify that the minimal function of $\xi$
is $\alpha_0$ (by (ii)), and that the minimal function of $\eta$ is
$\alpha_1$. Define $N=K_{\xi}\vee K_{\eta}$. Using the fact that
$K_{\xi}\cap K_{\eta}=\{0\}$, it is easy to see that
$$
T|N\sim T|K_{\xi}\oplus T|K_{\eta}\sim S(\alpha_0)\oplus S(\alpha_1).
$$
It follows from (iii) that $\phi_0|\psi_0$ and $\phi_1|\psi_1$, thus
$\ol{P_{H(\theta_j)}N}=\phi_j H^2\ominus \theta_j H^2$ for $j=0,1$.
Finally, suppose the Jordan model of $T_{N^\perp}$ is equal to
$S(\gamma_0)\oplus S(\gamma_1)$. By Theorem \ref{jordanmodel} (along
with (ii) and (iii)), we find that $\gamma_1=\phi_1=\beta_1$. By
Theorem \ref{det} we have
$$
\theta_0\theta_1=\det T=\det (T|N) \det (T_{N^\perp})=\alpha_0 \alpha_1 \gamma_0 \beta_1
$$
so that property (iv) implies $\gamma_0=\beta_0$. This completes the proof.
\end{proof}

\section{Classification theorem}
We take the final step towards our classification result.
\begin{proposition}\label{main2}
Let $T=S(\theta_0)\oplus S(\theta_1)$ be a Jordan operator. Let $M$
and $M'$ be invariant subspaces for $T$ such that $T|M\sim T|M'$ and
$T_{M^\perp}\sim T_{M'^{\perp}}$. Let $S(\alpha_0)\oplus
S(\alpha_1)$ be the Jordan model of $T|M$ and $T|M'$, and
$S(\beta_0)\oplus S(\beta_1)$ be the Jordan model of $T_{M^\perp}$
and $T_{M'^{\perp}}$. For $j=0,1$, let
$$
\ol{P_{H(\theta_j)}M}=\phi_j H^2\ominus \theta_j H^2
$$
and
$$
\ol{P_{H(\theta_j)}M'}=\phi'_j H^2\ominus \theta_j H^2,
$$
where $\phi_j$ and $\phi_j'$ are inner divisors of $\theta_j$.
Assume that $\phi_1|\phi_0$ and
$(\theta_1/\phi_1)|(\theta_0/\phi_0)$, along with $\phi'_1|\phi'_0$
and $(\theta_1/\phi'_1)|(\theta_0/\phi'_0)$. Then, there exists a
quasiaffinity $X\in \{T\}'$ such that $M=\ol{XM'}$.
\end{proposition}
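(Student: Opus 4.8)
The plan is to reduce everything to a single construction involving the canonical space of Proposition \ref{canonicalspace}. Since $T|M\sim T|M'$ and $T_{M^\perp}\sim T_{M'^\perp}$ share the Jordan models $S(\alpha_0)\oplus S(\alpha_1)$ and $S(\beta_0)\oplus S(\beta_1)$, Proposition \ref{jordanmodel}(ii) forces $\phi_0\equiv\phi_0'\equiv\theta_0/\alpha_0$ and $\phi_1\equiv\phi_1'\equiv\beta_1$; in particular $M$ and $M'$ have the same projection data and $\det(T|M)=\det(T|M')=\alpha_0\alpha_1$. The functions $\theta_j,\alpha_j,\beta_j$ satisfy hypotheses (i)--(iv) of Proposition \ref{canonicalspace} (by Proposition \ref{jordanmodel}(i),(iii),(iv) together with the assumed nesting), so I may form the canonical space $N$ of that proposition, which has the same restriction and compression Jordan models as $M$ and $M'$.

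The core of the argument is the claim that, for any invariant subspace $M$ sharing these invariants, there is a quasiaffinity $X\in\{T\}'$ with $XN\subset M$. I first record how the claim finishes the proof. Given such an $X$, the restriction $X|N$ is a quasiaffinity of $N$ onto $\overline{XN}$ intertwining $T|N$ and $T|\overline{XN}$, so $T|\overline{XN}\sim T|N$ by Theorem \ref{injection} and hence $\det(T|\overline{XN})=\alpha_0\alpha_1=\det(T|M)$; the inclusion $\overline{XN}\hookrightarrow M$ intertwines the two restrictions, which have equal determinants, and is injective, so Theorem \ref{latticeisom} forces it to have dense range, whence $\overline{XN}=M$. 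Applying the claim to $M$ and to $M'$ produces quasiaffinities $X,X'\in\{T\}'$ with $\overline{XN}=M$ and $\overline{X'N}=M'$. By Proposition \ref{prop*} there are a quasiaffinity $Y'\in\{T\}'$ and $u'\in H^\infty$ with $X'Y'=Y'X'=u'(T)$; putting $W=XY'\in\{T\}'$, which is again a quasiaffinity, and using $M'=\overline{X'N}$, continuity of $X,Y'$ together with $Y'X'=u'(T)$ and $u'(T)N\subset N$ gives $\overline{WM'}=\overline{XY'X'N}=\overline{Xu'(T)N}\subset\overline{XN}=M$. Upgrading this inclusion to an equality exactly as above yields $M=\overline{WM'}$ with $W$ a quasiaffinity in $\{T\}'$, which is the assertion.

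It remains to prove the claim, and this is where I expect the main obstacle. I would use the explicit generators $\xi=(P_{H(\theta_0)}(\theta_0/\alpha_0))\oplus(P_{H(\theta_1)}\beta_1)$ and $\eta=0\oplus(P_{H(\theta_1)}(\theta_1/\alpha_1))$ from Proposition \ref{canonicalspace}, so that $N=\bigvee_{n}\{T^n\xi,T^n\eta\}$. The invariants already pin down $M\cap(0\oplus H(\theta_1))=0\oplus((\theta_1/\alpha_1)H^2\ominus\theta_1 H^2)$ (this is the computation of $\ker Y$ in the proof of Proposition \ref{jordanmodel}, which depends only on the Jordan models and the nesting), so $\eta$ itself already lies in $M$. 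I therefore seek a commutant operator $X=\Psi(A)$ with $A=(a_{ij})_{i,j=0,1}$ in the algebra $\A$ of Lemma \ref{invert} such that $X\xi$ and $X\eta$ both lie in $M$ and $\det A\wedge\theta_0\equiv1$, the last condition guaranteeing via Lemma \ref{invert} that $X$ is a quasiaffinity. Requiring $(\theta_0/\theta_1)\alpha_1$ to divide $a_{01}$ makes the first component of $X\eta$ vanish, so that $X\eta\in M\cap(0\oplus H(\theta_1))\subset M$ automatically; the substantive task is to choose the remaining entries so that $X\xi$ falls inside $M$ while $\det A$ stays coprime to $\theta_0$.

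To accomplish this last step I would fix a maximal vector of $M$ for $T|M$ by Theorem \ref{maxvector} and arrange $X\xi$ to reproduce it, then use the genericity provided by Theorem \ref{ell1} to perturb the scalar and functional parameters so that the relevant greatest common inner divisors collapse to $1$ and $\det A\wedge\theta_0\equiv1$ holds. The delicate point, and the crux of the whole proof, is the simultaneous balancing of two competing demands: forcing both images $X\xi$ and $X\eta$ into the prescribed subspace $M$ through a single $2\times2$ symbol, and at the same time keeping the determinant of that symbol coprime to $\theta_0$ so that the intertwiner remains a quasiaffinity.
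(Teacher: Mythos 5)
Your reduction and your finishing argument are sound: Proposition \ref{jordanmodel} does force $\phi_j\equiv\phi_j'$, the composition $W=XY'$ with $Y'X'=u'(T)$ does carry $M'$ into $M$, and the upgrade of each inclusion to an equality via equality of determinants and Theorem \ref{latticeisom} is exactly right. The problem is that the proof stops at what you yourself call ``the crux of the whole proof'': you never actually produce the quasiaffinity $X\in\{T\}'$ with $X\xi\in M$ and $X\eta\in M$. You describe an intention (``I would fix a maximal vector \dots and arrange $X\xi$ to reproduce it, then use the genericity provided by Theorem \ref{ell1} to perturb \dots'') and then flag the simultaneous satisfaction of $X\xi,X\eta\in M$ and $\det A\wedge\theta_0\equiv1$ as a ``delicate point'' without resolving it. As written this is a genuine gap: the one step that carries all the content of the proposition is asserted as a hope, not proved.

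The gap is fillable, and in fact the two demands you worry about never compete, because a \emph{diagonal} symbol suffices. Choose $\mu=\mu_0\oplus\mu_1\in M$ maximal for $T|M$ and with $\mu_j$ maximal for $S(\theta_j)|\ol{P_{H(\theta_j)}M}$ (Theorem \ref{maxvector}); writing $\mu_j=P_{H(\theta_j)}\phi_jg_j$ with $g_j$ determined modulo $(\theta_j/\phi_j)H^2$, Theorem \ref{ell1} lets you pick representatives with $g_j\wedge\theta_0\equiv1$, and Lemma \ref{outer} replaces $\mu$ by $v(T)\mu\in M$ so that $\mu_j=P_{H(\theta_j)}\phi_jf_j$ with $f_j\in H^\infty$ and $f_j\wedge\theta_0\equiv1$ (this is precisely the normalization carried out in the paper's own proof). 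Now set $X=f_0(S(\theta_0))\oplus f_1(S(\theta_1))$: the off-diagonal entry is $0$, $\det A=f_0f_1$ is coprime to $\theta_0$ so $X$ is a quasiaffinity by Lemma \ref{invert}, $X\xi=\mu\in M$, and $X\eta=0\oplus P_{H(\theta_1)}f_1(\theta_1/\alpha_1)$ lies in $(\theta_1/\alpha_1)H^2\ominus\theta_1H^2=M\cap(0\oplus H(\theta_1))$, so $\ol{XN}\subset M$ and your determinant argument closes the claim. With that step supplied, your route is correct and genuinely different from the paper's: the paper never passes through the canonical space $N$, but instead maps $M'$ into $M$ directly using diagonal intertwiners built from maximal vectors of both subspaces, and replaces your determinant argument by the splitting principle (Theorem \ref{splitting}) relative to the hyperinvariant envelope $E$, reducing the final comparison to the uniqueness statement of Theorem \ref{invsubmult1} for a multiplicity-one operator. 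Your version, once completed, is arguably shorter and avoids the splitting principle entirely, at the cost of invoking Proposition \ref{prop*} to invert $X'$; the paper's version has the advantage of producing the intertwiner between $M'$ and $M$ in one pass.
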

\begin{proof}
By Proposition \ref{jordanmodel}, we have that
$\phi_0\equiv\theta_0/\alpha_0\equiv\phi'_0$ and
$\phi_1\equiv\beta_1\equiv\phi'_1$, so that
$$
\ol{P_{H(\theta_j)}M}=\ol{P_{H(\theta_j)}M'}
$$ for $j=0,1$. Define
$$
E=\ol{P_{H(\theta_0)} M}\oplus \ol{P_{H(\theta_1)}M}=\ol{P_{H(\theta_0)} M'}\oplus
\ol{P_{H(\theta_1)} M'},
$$
which is hyperinvariant for $T$ by Theorem \ref{hyperinvJordan}.
Hence, $E$ contains $M$ and $M'$ along with any image of those
subspaces under an operator lying in the commutant of $T$.

By Theorem \ref{maxvector}, we may choose $\xi=\xi_0\oplus \xi_1 \in
M$ a maximal vector for $T|M$ with the additional property that
$\xi_j$ is maximal for $S(\theta_j)|\ol{P_{H(\theta_j)} M}$ for each
$j=0,1$. Similarly, we may choose  $\xi'=\xi'_0\oplus \xi'_1\in M'$
a maximal vector for $T|M'$ with the additional property that $
\xi'_j$ is maximal for $S(\theta_j)|\ol{P_{H(\theta_j)}M'}$ for each
$j=0,1$. By Lemma \ref{outer}, there exists an outer function $v\in
H^\infty$ such that $v\xi_0,v\xi_1,v\xi'_0$ and $v\xi'_1$ all belong
to $H^\infty$. Since $v$ is outer and $\theta_0$ is inner, we have
$v\wedge \theta_0\equiv 1$, so that $v(S(\theta_0)), v(S(\theta_1))$
and $v(T)$ are quasiaffinities by Proposition
\ref{jordanblockprops}, and thus $v(T)\xi$ and $v(T)\xi'$ have the
same maximality properties as $\xi$ and $\xi'$ respectively.
Consequently, upon replacing $\xi\in M$ and $\xi'\in M'$ by
$v(T)\xi\in M$ and $v(T)\xi'\in M'$ respectively, we may further
assume that
$$
\xi=(P_{H(\theta_0)}\phi_0 f_0) \oplus (P_{H(\theta_1)}\phi_1 f_1)
$$
and
$$
\xi'=(P_{H(\theta_0)}\phi_0 f'_0)\oplus (P_{H(\theta_1)}\phi_1 f'_1)
$$
where $f_0,f_1,f'_0,f'_1\in H^\infty$ satisfy $f_j\wedge
\theta_0\equiv f'_j\wedge \theta_0\equiv 1$ for $j=0,1$. Define
$$
Y=\left(
\begin{array}{cc}
f_0 (S(\theta_0)) & 0\\
0 & f_1(S(\theta_1))
\end{array}
\right),
Y'=\left(
\begin{array}{cc}
f'_0 (S(\theta_0)) & 0\\
0 & f'_1(S(\theta_1))
\end{array}
\right).
$$
It is clear $Y'\xi=Y\xi'$. Lemma \ref{invert} implies that $Y,Y'\in
\{T\}'$ are quasiaffinities and that there exist quasiaffinities
$Z,Z'\in \{T\}'$ such that $ZY=YZ=(f_0 f_1) (T)$ and
$Z'Y'=Y'Z'=(f'_0 f'_1) (T)$. If we set $K=\bigvee_{n=0}^\infty T^n
\xi \subset M$ and $K'=\bigvee_{n=0}^\infty T^n \xi'\subset M'$,
then $Z'Y\xi'=Z'Y'\xi=(f'_0 f'_1)(T)\xi \in K$, so that
$\ol{Z'YK'}\subset K$. By virtue of Theorem \ref{injection}, we see
that $T|\ol{Z'YK'}\sim T|K'\sim S(\alpha_0)$, so Theorem
\ref{invsubmult1} implies that $\ol{Z'YK'}=K$. In particular,
$K\subset \ol{Z'Y M'}$. In addition, we have $\ol{Z'Y M'}\subset E$
because of the hyperinvariance of $E$ .

Choose now  a cylic vector $k\in K$ for $(T|K)^*$; the fact that
$(T|K)^*$ has multiplicity one follows from Theorem
\ref{jordanadjoint}. Define the following subspaces
$$
K_{M}=\bigvee_{n=0}^\infty (T|M)^{*n}k
$$
$$
K_{M'}=\bigvee_{n=0}^\infty (T|\ol{Z'Y M'})^{*n}k
$$
$$
K_E=\bigvee_{n=0}^\infty (T|E)^{*n}k
$$
along with $L_M=M\ominus K_M, L_{M'}=\ol{Z'Y M'}\ominus K_{M'}$ and $ F=E\ominus K_E$. We claim now that $L_M\subset F$ and $L_{M'}\subset F$. Indeed, let $h\in L_M\subset M\subset E$. Then, for any $n\geq 0$, we have
\begin{align*}
\langle h, (T|E)^{*n}k\rangle &=\langle h, P_E T ^{*n}k\rangle\\
&=\langle h, P_M T ^{*n}k\rangle\\
&=\langle h, (T|M)^{*n}k\rangle\\
&=0
\end{align*}
since $L_M\perp K_M$. A similar computation shows that $L_{M'}\subset
F$.

Note that the Jordan model of $T|E$ is easy to identify
given the form of $E$ and the fact that $\theta_1/\phi_1$ divides
$\theta_0/\phi_0$:
$$
T|E\sim S(\theta_0/\phi_0)\oplus S(\theta_1/\phi_1)=S(\alpha_0)\oplus S(\theta_1/\beta_1).
$$
Also, by Theorem \ref{injection} we have $T|{\ol{Z'Y M'}}\sim
T|M'\sim S(\alpha_0)\oplus S(\alpha_1)$. We now apply Theorem
\ref{splitting} to $T|M, T|\ol{Z'Y M'}$ and $T|E$ for the subspace
$K\subset M\cap \ol{Z'Y M'} \cap E$. We have that
$$
L_M\cap K=L_{M'}\cap K=F\cap K=\{0\},
$$
$$
M=K\vee L_M, \ol{Z'Y M'}=K\vee L_{M'}, E=K\vee F $$ and
$$
T|L_M\sim T|L_{M'}\sim S(\alpha_1), T|F \sim S(\theta_1/\beta_1).
$$
Since
$L_M,L_{M'}\subset F$ and $T|F$ has multiplicity one, Theorem
\ref{invsubmult1} implies that $L_M=L_{M'}$, whence $M=K\vee L_M=K \vee
L_{M'}=\ol{Z'Y M'}$. Setting $X=Z'Y$ completes the proof.
\end{proof}

We can now establish the existence of a canonical space.
\begin{theorem}\label{similcanmodel}
Let $T\in \B(\hil)$ be an operator of class $C_0$ with Jordan model
$S(\theta_0)\oplus S(\theta_1)$.  Let $M\subset \hil$ be an
invariant subspace for $T$ such that $S(\alpha_0)\oplus S(\alpha_1)$
is the Jordan model of $T|M$ and $S(\beta_0)\oplus S(\beta_1)$ is
the Jordan model of $T_{M^\perp}$. Let $N\subset H(\theta_0)\oplus
H(\theta_1)$ be the smallest invariant subspace for
$S(\theta_0)\oplus S(\theta_1)$ containing
$$
\xi=(P_{H(\theta_0)}(\theta_0/\alpha_0))\oplus (P_{H(\theta_1)}\beta_1)
$$
and
$$
\eta=0\oplus (P_{H(\theta_1)} (\theta_1/\alpha_1)).
$$
Then, $M$ is quasisimilar to $N$.
\end{theorem}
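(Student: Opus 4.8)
The plan is to reduce the statement to the Jordan operator $J=S(\theta_0)\oplus S(\theta_1)$ acting on $\hil'=H(\theta_0)\oplus H(\theta_1)$, transport $M$ to an invariant subspace of $J$, match it up to a commutant quasiaffinity with the explicit subspace $N$ produced by Proposition \ref{canonicalspace}, and finally transfer the conclusion back to $\hil$. Since $T$ has Jordan model $J$, Theorem \ref{existencejordan} supplies quasiaffinities $W:\hil\to\hil'$ and $V:\hil'\to\hil$ with $WT=JW$ and $VJ=TV$. I will establish $M\prec N$ and $N\prec M$ separately.

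For $M\prec N$, I first set $M_1=\ol{WM}$, an invariant subspace for $J$ with $J|M_1\sim T|M\sim S(\alpha_0)\oplus S(\alpha_1)$ and $J_{M_1^\perp}\sim T_{M^\perp}\sim S(\beta_0)\oplus S(\beta_1)$ by Proposition \ref{jordansim}. The coordinate functions of $M_1$ need not obey the divisibility hypotheses required by Proposition \ref{main2}, so I invoke Theorem \ref{hyper} to produce a quasiaffinity $X_1\in\{J\}'$ for which $M_2=\ol{X_1 M_1}$ has $\ol{P_{H(\theta_0)}M_2}\oplus\ol{P_{H(\theta_1)}M_2}$ hyperinvariant; by Theorem \ref{hyperinvJordan} the associated inner functions $\phi_0,\phi_1$ then satisfy $\phi_1|\phi_0$ and $(\theta_1/\phi_1)|(\theta_0/\phi_0)$, while Proposition \ref{jordansim} shows $M_2$ retains the Jordan models of $M$. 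Applying Proposition \ref{jordanmodel} to $M_2$ yields exactly the relations (i)--(iv) needed to invoke Proposition \ref{canonicalspace}, which certifies that the $N$ of the statement satisfies $J|N\sim S(\alpha_0)\oplus S(\alpha_1)$, $J_{N^\perp}\sim S(\beta_0)\oplus S(\beta_1)$, together with the normalization $\ol{P_{H(\theta_0)}N}=(\theta_0/\alpha_0)H^2\ominus\theta_0 H^2$ and $\ol{P_{H(\theta_1)}N}=\beta_1 H^2\ominus\theta_1 H^2$. Now $M_2$ and $N$ are invariant subspaces of the one Jordan operator $J$, both normalized and with matching Jordan models, so Proposition \ref{main2} provides a quasiaffinity $X_2\in\{J\}'$ with $N=\ol{X_2 M_2}$. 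Taking $X_3=X_2 X_1 W:\hil\to\hil'$, a quasiaffinity intertwining $T$ and $J$, a chase of closures gives $\ol{X_3 M}=N$, that is $M\prec N$.

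For the reverse relation $N\prec M$ I use that $T$ itself has multiplicity two. The operator $P=VX_3$ lies in $\{T\}'$ and is a quasiaffinity, so property $(*)$ (Proposition \ref{prop*}) yields a quasiaffinity $Q\in\{T\}'$ and $w\in H^\infty$ with $PQ=QP=w(T)$; as $w(T)$ is injective, its restriction to $M$ is an injective intertwiner of $T|M$ with itself, whence $\ol{w(T)M}=M$ by Theorem \ref{latticeisom}. Since $\ol{PM}=\ol{VX_3 M}=\ol{VN}$, I obtain $\ol{(QV)N}=\ol{Q\ol{VN}}=\ol{QPM}=\ol{w(T)M}=M$, and $QV:\hil'\to\hil$ is a quasiaffinity intertwining $J$ and $T$. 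This gives $N\prec M$, and together with the forward relation, $M\sim N$.

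The main obstacle is the forward direction: matching the transported subspace to the explicit $N$. It is delicate precisely because $\ol{WM}$ carries no a priori control on its coordinate projections, whereas the hypotheses of Propositions \ref{canonicalspace} and \ref{main2} are phrased in terms of the normalized coordinate functions $\phi_j$. The role of Theorem \ref{hyper} is exactly to restore this normalization without altering the Jordan models, and the role of Proposition \ref{jordanmodel} is to verify that the numerical relations among $\theta_j,\alpha_j,\beta_j$ forced by a genuine invariant subspace are the very relations under which $N$ was built. The passage between $T$ and $J$ in both directions, by contrast, is soft and uses only property $(*)$ and Theorem \ref{latticeisom}.
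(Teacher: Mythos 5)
Your proposal is correct and follows essentially the same route as the paper: transport $M$ to the Jordan model, normalize via Theorem \ref{hyper}, combine Proposition \ref{jordanmodel}, Proposition \ref{canonicalspace} and Proposition \ref{main2} to get $M\prec N$, and then return to $\hil$ for the reverse relation. The only cosmetic difference is that in the reverse direction you inline the argument of Lemma \ref{similar} (property $(*)$ plus Theorem \ref{latticeisom}) rather than citing the lemma directly, which the paper does by first observing $M\prec \ol{BN}$ and $N\prec\ol{BN}$.
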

\begin{proof}
Let $J=S(\theta_0)\oplus S(\theta_1)$. By Theorem \ref{existencejordan}, we can find quasiaffinities $A:\hil\to H(\theta_0)\oplus H(\theta_1)$ and $B:H(\theta_0)\oplus H(\theta_1)\to \hil$ such that $AT=JA, BJ=TB$.
Applying Theorem \ref{hyper}, we obtain a quasiaffinity $X\in \{J\}'$ such that
$$
\ol{P_{H(\theta_0)}XAM}\oplus \ol{P_{H(\theta_1)}XAM}
$$
is a hyperinvariant subspace for $J$. By virtue of Proposition \ref{jordansim}, we have
$J|\ol{XAM}\sim T|M$ and $J_{(XAM)^\perp}\sim T_{M^\perp}$. By Theorem \ref{hyperinvJordan}, Proposition \ref{jordanmodel}, Proposition \ref{canonicalspace} and Proposition \ref{main2}, we can find a quasiaffinity $Y\in\{J\}'$ such that $\ol{YXAM}=N$, and thus $M\prec N$. Now, it is obvious that $N\prec \ol{BN}$, whence $M\prec \ol{BN}$. But then Lemma \ref{similar} implies that $\ol{BN}\sim M$, and thus $N\prec M$. This completes the proof.
\end{proof}

The main result of the paper is now easily proved.

\begin{theorem}\label{classification}
Let $T$ be an operator of class $C_0$ with multiplicity
two.  Let $M,M'$ be two invariant subspaces for $T$.
Then, $M \sim M'$ if and only if $T|M\sim T|M'$ and $T_{M^\perp}\sim
T_{M'^\perp}$.
\end{theorem}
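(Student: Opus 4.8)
The plan is to treat the two implications separately, using throughout that $\sim$ is an equivalence relation on pairs $(T,M)$.

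The forward implication is essentially already recorded. If $M\sim M'$, then in particular $M\prec M'$, so there is a quasiaffinity $X\in\{T\}'$ with $\ol{XM}=M'$. Applying Proposition \ref{jordansim} with $T'=T$ yields $T|M'\sim T|M$ and $T_{M'^\perp}\sim T_{M^\perp}$, as desired.

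For the reverse implication the real content has been front-loaded into Theorem \ref{similcanmodel}, and what remains is to exploit the uniqueness of the canonical space. Let $S(\theta_0)\oplus S(\theta_1)$ be the Jordan model of $T$, which has only two summands by Corollary \ref{multiplicityn} since $T$ has multiplicity two. By hypothesis $T|M\sim T|M'$, so the two restrictions share a single Jordan model $S(\alpha_0)\oplus S(\alpha_1)$; likewise $T_{M^\perp}\sim T_{M'^\perp}$ forces a common Jordan model $S(\beta_0)\oplus S(\beta_1)$ for the two compressions. The essential point is that the vectors
$$
\xi=(P_{H(\theta_0)}(\theta_0/\alpha_0))\oplus (P_{H(\theta_1)}\beta_1), \qquad \eta=0\oplus (P_{H(\theta_1)}(\theta_1/\alpha_1))
$$
defining the canonical space $N$ of Theorem \ref{similcanmodel} depend only on the inner functions $\theta_0,\theta_1,\alpha_0,\alpha_1,\beta_1$, and these are the same data for $M$ and for $M'$ (the Jordan model being unique by Theorem \ref{existencejordan}). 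Since replacing any of these functions by a unimodular scalar multiple merely rescales $\xi$ and $\eta$, the smallest invariant subspace $N$ for $S(\theta_0)\oplus S(\theta_1)$ containing $\xi$ and $\eta$ is literally the same for $M$ and for $M'$.

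Consequently Theorem \ref{similcanmodel} gives both $M\sim N$ and $M'\sim N$ for one and the same pair, and transitivity of $\sim$ yields $M\sim M'$. I expect no genuine obstacle at this final step: the whole difficulty lies upstream, in Theorem \ref{similcanmodel} together with its supporting results---Proposition \ref{jordanmodel} identifying the Jordan models, Proposition \ref{canonicalspace} realising the prescribed invariants by an explicit $N$, and Proposition \ref{main2} showing that two subspaces carrying the same data lie in a common commutant orbit. Once those are in hand, the classification follows formally from the existence and uniqueness of the canonical model.
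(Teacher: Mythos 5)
Your proof is correct and follows the paper's own route: the forward direction via Proposition \ref{jordansim}, and the reverse direction by observing that the canonical space $N$ of Theorem \ref{similcanmodel} is built solely from the (common) Jordan models of $T$, $T|M\sim T|M'$ and $T_{M^\perp}\sim T_{M'^\perp}$, so that $M\sim N\sim M'$ by transitivity. The only difference is that you make explicit the uniqueness-up-to-unimodular-scalar point that the paper leaves implicit.
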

\begin{proof}
One direction follows from Proposition \ref{jordansim} and the
sentence immediately following its proof. Assume now that $T|M\sim
T|M'$ and $T_{M^\perp}\sim T_{M'^\perp}$. Then, Theorem
\ref{similcanmodel} implies the existence of an invariant subspace
$N$ for the Jordan model of $T$ such that $M\sim N$ and $M'\sim N$,
so we are done.
\end{proof}

\section{Acknowledgements}
The author was supported by a NSERC PGS grant.

\begin{bibdiv}
\begin{biblist}

\bib{bercOTA}{book}{
   author={Bercovici, Hari},
   title={Operator theory and arithmetic in $H^\infty$},
   series={Mathematical Surveys and Monographs},
   volume={26},
   publisher={American Mathematical Society},
   place={Providence, RI},
   date={1988},
   pages={xii+275},
   isbn={0-8218-1528-8},
   review={\MR{954383 (90e:47001)}},
}

\bib{B}{article}{
   author={Bercovici, Hari},
   title={The quasisimilarity orbits of invariant subspaces},
   journal={J. Funct. Anal.},
   volume={95},
   date={1991},
   number={2},
   pages={344--363},
   issn={0022-1236},
   review={\MR{1092130 (92e:47017)}},
   doi={10.1016/0022-1236(91)90033-2},
}

\bib{BS1}{article}{
   author={Bercovici, Hari},
   author={Smotzer, Thomas},
   title={Quasisimilarity of invariant subspaces for uniform Jordan
   operators of infinite multiplicity},
   journal={J. Funct. Anal.},
   volume={140},
   date={1996},
   number={1},
   pages={87--99},
   issn={0022-1236},
   review={\MR{1404575 (97h:47004)}},
   doi={10.1006/jfan.1996.0099},
}

\bib{BS2}{article}{
   author={Bercovici, Hari},
   author={Smotzer, Thomas},
   title={Classification of cyclic invariant subspaces of Jordan operators},
   conference={
      title={Recent advances in operator theory},
      address={Groningen},
      date={1998},
   },
   book={
      series={Oper. Theory Adv. Appl.},
      volume={124},
      publisher={Birkh\"auser},
      place={Basel},
   },
   date={2001},
   pages={131--143},
   review={\MR{1839833 (2002f:47011)}},
}

\bib{BL}{article}{
   author={Bercovici, Hari},
   author={Li, Wing Suet},
   title={Invariant subspaces with extremal structure for operators of class
   $C_0$},
   conference={
      title={Operator theory, structured matrices, and dilations},
   },
   book={
      series={Theta Ser. Adv. Math.},
      volume={7},
      publisher={Theta, Bucharest},
   },
   date={2007},
   pages={115--123},
   review={\MR{2389620 (2009c:47007)}},
}

\bib{BSL1}{article}{
   author={Bercovici, Hari},
   author={Li, Wing Suet},
   author={Smotzer, Thomas},
   title={A continuous version of the Littlewood-Richardson rule and its
   application to invariant subspaces},
   journal={Adv. Math.},
   volume={134},
   date={1998},
   number={2},
   pages={278--293},
   issn={0001-8708},
   review={\MR{1617805 (2000e:47011)}},
   doi={10.1006/aima.1997.1702},
}

\bib{BSL2}{article}{
   author={Bercovici, Hari},
   author={Li, Wing Suet},
   author={Smotzer, Thomas},
   title={Continuous versions of the Littlewood-Richardson rule, selfadjoint
   operators, and invariant subspaces},
   journal={J. Operator Theory},
   volume={54},
   date={2005},
   number={1},
   pages={69--92},
   issn={0379-4024},
   review={\MR{2168859 (2006h:47029)}},
}

\bib{BT}{article}{
   author={Bercovici, Hari},
   author={Tannenbaum, Allen},
   title={The invariant subspaces of a uniform Jordan operator},
   journal={J. Math. Anal. Appl.},
   volume={156},
   date={1991},
   number={1},
   pages={220--230},
   issn={0022-247X},
   review={\MR{1102607 (92d:47008)}},
   doi={10.1016/0022-247X(91)90392-D},
}

\bib{LM1}{article}{
   author={Li, Wing Suet},
   author={M{\"u}ller, Vladim{\'{\i}}r},
   title={Littlewood-Richardson sequences associated with $C_0$-operators},
   journal={Acta Sci. Math. (Szeged)},
   volume={64},
   date={1998},
   number={3-4},
   pages={609--625},
   issn={0001-6969},
   review={\MR{1666059 (2000b:47027)}},
}

\bib{LM2}{article}{
   author={Li, Wing Suet},
   author={M{\"u}ller, Vladim{\'{\i}}r},
   title={Invariant subspaces of nilpotent operators and LR-sequences},
   journal={Integral Equations Operator Theory},
   volume={34},
   date={1999},
   number={2},
   pages={197--226},
   issn={0378-620X},
   review={\MR{1694708 (2001d:47013)}},
   doi={10.1007/BF01236472},
}

\bib{Sar}{article}{
   author={Sarason, Donald},
   title={Generalized interpolation in $H^{\infty }$},
   journal={Trans. Amer. Math. Soc.},
   volume={127},
   date={1967},
   pages={179--203},
   issn={0002-9947},
   review={\MR{0208383 (34 \#8193)}},
}

\end{biblist}
\end{bibdiv}

\end{document}